\documentclass[12pt]{article}
\usepackage{cancel}
\usepackage{tabularx}
\usepackage{tikz}
\usepackage{relsize}
mm \textheight=8.9in

\usepackage{amssymb}
\usepackage{amsmath}
\usepackage{amsthm}
\usepackage{color}

\usepackage{tikz}
\usetikzlibrary{patterns,snakes}

\usepackage{tikz-cd}

\usepackage{array,multirow}

\newcommand{\br}[3]{{$#1$}$\lower4pt\hbox{$\tp\atop\raise4pt \hbox{$\scriptscriptstyle{#2}$}$} ${$#3$}}
\newcommand{\tw}[3]{{$#1$}${\,\scriptscriptstyle {#2}}\atop\raise9pt\hbox{$\scriptstyle\tp$} ${$#3$}}
\newcommand{\ttps}[2]{{#1}\raise5pt\hbox{$\lower12pt\hbox{$\scriptstyle\tp$}\atop \lower0pt\hbox{$\tilde\;$}$}\raise4.5pt\hbox{${\scriptstyle{#2}}$}}
\newcommand{\st}[1]{\mbox{${\,\scriptscriptstyle {#1}}\atop\raise5.5pt\hbox{$*$}$}}

\newcommand{\rd}[1]{\mbox{${\,\scriptscriptstyle {#1}}\atop\raise5.5pt\hbox{$\bullet$}$}}
\newcommand{\rt}[1]{\otimes_\chi}
\newcommand{\lt}[1]{\mbox{${\,\scriptscriptstyle {#1}}\atop\raise5.5pt\hbox{$\ltimes$}$}}
\newcommand{\btr}{\raise1.2pt\hbox{$\scriptstyle\blacktriangleright$}\hspace{2pt}}
\newcommand{\btl}{\raise1.2pt\hbox{$\scriptstyle\blacktriangleleft$}\hspace{2pt}}

\newcommand{\lcr}{\raise1.0pt \hbox{${\scriptstyle\rightharpoonup}$}}
\newcommand{\rcr}{\raise1.0pt \hbox{${\scriptstyle\leftharpoonup}$}}

\newcommand{\ttp}{{\lower12pt\hbox{$\tp$}\atop \hbox{$\tilde\;$}}}

\newcommand{\id}{\mathrm{id}}

\newcommand{\Ic}{\mathcal{I}}

\newcommand{\Ag}{\mathfrak{A}}
\newcommand{\Dg}{\mathfrak{D}}
\newcommand{\Bg}{\mathfrak{B}}

\newcommand{\Cg}{\mathfrak{C}}

\renewcommand{\S}{\mathcal{S}}

\newcommand{\C}{\mathbb{C}}
\newcommand{\Z}{\mathbb{Z}}

\newcommand{\tp}{\otimes}

\newcommand{\zt}{\zeta}

\newcommand{\ve}{\varepsilon}
\newcommand{\gm}{\gamma}
\newcommand{\dt}{\delta}

\newcommand{\op}{\oplus}
\newcommand{\la}{\lambda}
\newcommand{\tr}{\triangleright}

\newcommand{\End}{\mathrm{End}}

\newcommand{\Span}{\mathrm{Span}}
\newcommand{\Aut}{\mathrm{Aut}}

\newcommand{\rk}{\mathrm{rk}}

\newcommand{\Rm}{\mathrm{R}}

\newcommand{\La}{\Lambda}

\newcommand{\mg}{\mathfrak{m}}
\newcommand{\g}{\mathfrak{g}}
\renewcommand{\b}{\mathfrak{b}}
\renewcommand{\k}{\mathfrak{k}}

\newcommand{\h}{\mathfrak{h}}

\newcommand{\s}{\mathfrak{s}}

\renewcommand{\o}{\mathfrak{o}}
\newcommand{\n}{\mathbf{n}}

\newcommand{\m}{\mathbf{m}}
\newcommand{\eps}{\epsilon}

\newcommand{\nn}{\nonumber}
\newcommand{\p}{\mathfrak{p}}
\renewcommand{\l}{\mathfrak{l}}
\renewcommand{\c}{\mathfrak{c}}

\newcommand{\si}{\sigma}
\newcommand{\al}{\alpha}
\renewcommand{\t}{\mathfrak{t}}
\newcommand{\bt}{\beta}

\newcommand{\be}{\begin{eqnarray}}
\newcommand{\ee}{\end{eqnarray}}

\newtheorem{thm}{Theorem}[section]
\newtheorem{propn}[thm]{Proposition}
\newtheorem{lemma}[thm]{Lemma}
\newtheorem{corollary}[thm]{Corollary}

\newtheorem{remark}[thm]{Remark}
\newtheorem{definition}[thm]{Definition}
\newtheorem{example}[thm]{Example}

\newcount\prg

\newcommand{\parag}{\advance\prg by1 {\noindent\bf\thesection.\the\prg\hspace{6pt}}}

\begin{document}

\title{Graded Satake diagrams and super-symmetric pairs}
\author{D. Algethami${}^{\dag}$, A. Mudrov${}^{\sharp}$, V. Stukopin${}^\sharp$
\vspace{10pt}\\
\small ${\dag}$ Department of Mathematics, College of Science,\\
\small
 University of Bisha, P.O. Box 551, Bisha 61922, Saudi Arabia,
\vspace{10pt}\\
\small
${\sharp}$ Moscow Institute of Physics and Technology,\\
\small
9 Institutskiy per., Dolgoprudny, Moscow Region,
141701, Russia,
\vspace{10pt}\\
\small
 e-mail: dalgethami@ub.edu.sa,  mudrov.ai@mipt.ru, stukopin.va@mipt.ru
\vspace{10pt}
\\
}

\maketitle

\begin{abstract}
We list classical spherical subalgebras in basic matrix Lie superalgebras which are quantizable to coideal subalgebras in the standard
 quantum supergroups, for any choice of Borel subalgebra.
 We classify the corresponding Satake-type diagrams and prove that each of them
 defines a family of proper spherical subalgebras.
\end{abstract}

{\small \underline{2010 AMS Subject Classification}: 17B37,17A70}.
\\

{\small \underline{Key words}:  super-symmetric pairs, spherical Lie superalgebras, graded Satake diagrams}

\newpage
\tableofcontents

\section{Introduction}


This article is a continuation of our recent work \cite{AMS}  on  spherical subalgebras in  Lie superalgebras,
where we addressed  a special choice of Borel subalgebra.
Homogeneous spherical manifolds generalize symmetric spaces \cite{VK} and are locally represented by pairs of a reductive total Lie algebra and a stabilizer subalgebra.
The non-graded classical geometry of such manifolds has been a textbook topic \cite{Hel}, while its super-symmetry analogue is a relatively new theme (see, for example  \cite{Sh,Sh1}).
The invention of quantum groups \cite{FRT,D1} naturally brings about their non-commutative variants of  homogeneous and, in particular, symmetric  spaces
 \cite{NS,NDS}. In  its modern form, the theory of quantum symmetric pairs  was developed by  Letzter in \cite{Let}.
 Nowadays it has been evolved to a vibrant  chapter of quantum algebra \cite{K,BK,RV,AV}. Its supersymmetric generalization
 appears to be a natural further step down that path.

Unlike for the non-graded reducible Lie algebras, different Borel subalgebras in Lie superalgebras are in general not isomorphic.
It is therefore meaningful to study different polarizations because the mere definition of spherical subalgebra depends on a chosen Borel subalgebra.
Another argument is that non-isomorphic polarizations lead to different quantum groups. The class of spherical subalgebras under the current study
comprises exactly those which admit quantization along Letzter's lines.

Like in  our previous work \cite{AMS}, we study basic matrix Lie superalgebras: general linear and ortho-symplectic. They constitute
the bulk case of finite dimensional reductive Lie superalgebras. However,  we confined ourselves in \cite{AMS} with a special case of symmetric grading of their natural module and with the  minimal number of
odd simple roots in the Borel subalgebra. In this paper we drop this restriction and give a full analysis  to all possible polarizations of the
total Lie superalgebra.

In this presentation we focus on the quasiclassical theory and do not address such important issues as coideal subalgebras \cite{Let},
R-matrices, reflection equation \cite{KSkl, KSS} {\em etc},
which are points of common interest in the quantum version of the theory. The current  work lays a foundation for further studies in this field as it rounds up a variety
of target objects.
The main finding of the paper is a classification of super-symmetric  pairs relative to a given polarization. Such a  classification
 can be encoded in graded diagrams of Satake type, similarly to the traditional non-graded approach. It should be noted that the list of superspherical subalgebras is significantly richer than
 of their  non-graded analogs even in the minimal symmetric setting of \cite{AMS}.

The problem of quantum super-symmetric pairs was also addressed in  \cite{Shen, ShenWang} from a different angle and under restriction
to only even Levi core subalgebras.
That approach made use of bosonization of quantum supergroups {\em a la} Radford-Majid, and Yamane's theory of Lusztig automorphisms.
We employ a straightforward application of the root theory and a concept of Weyl operator acting on roots and weights.
That is an analog of the longest element of the Weyl group of a non-graded root system. This element plays a key role in the conventional
theory of symmetric pairs, both classical and quantum. In the super-symmetric case, the Weyl group is
too small to accommodate an element with  required properties, but fortunately it admits  a substitute   at least for  basic matrix Lie superalgebras.
In fact, our construction implicitly refers to the Weyl groupoid, however we do not go far along this path.
What was special to \cite{AMS} is that  we considered only even Weyl operators (which preserve a partition to even and odd roots).
However this assumption is too restrictive and has to be dropped if, say, the highest and lowest vectors of the modules involved have  different degrees. Such a  modification  allows to cover all Borel subalgebras and extends the theory specifically for the  general linear Lie superalgebra.

It is worthy to note that our approach has certain  similarities with  \cite{RV} dealing with  the non-graded case.
However our logic is quite inverse to that of  \cite{RV}. We do not start with an involutive  automorphism $\tau$ of the Cartan matrix
subject to a bi-partition of simple roots. Instead, we  formulate quasiclassical conditions on such a split to generate a quantizable subalgebra,
like we did in \cite{AMS}. In the case of even Weyl operator, these two lines of  reasoning turned out to be equivalent.
If the Weyl operator is not even, our approach  pays off, because the involution $\tau$ fails to be an automorphism.
Rather, it gives rise to  weird Satake diagrams, which might be more consistently  treated as pairs of non-isomorphic decorated Dynkin
graphs.

The paper is organized as follows.
Section \ref{Sec_ClSPhP} develops a general theory of classical super-symmetric pairs, and relate them with
decorated Dynkin diagrams (DDD). Those are pre-Satake diagrams, each of which is encoding a family of super-spherical subalgebras $\k\subset \g$,
depending on a vector of mixture parameters. They amount to Satake diagrams that withstand
selection rules listed in Section \ref{Sec_ClSPhP}.  Those rules are arranged  in a set of lemmas which  discard diagrams producing $\k=\g$ for all values of mixture parameters.
We state them without proof referring to \cite{AMS} for details.
The resulting classification of Satake diagrams is given in Section \ref{Sec_GrSD}.
In the last Section \ref{Sec_Nontriviality}, we demonstrate that all Satake diagrams  are non-trivial, i.e. they
define proper $\k\subset \g$ for a non-empty set of vectors of mixture parameters.
We do it by showing that such $\k$ have more invariants than $\g$, in certain $\g$-modules.
These invariants may be viewed as classical analogs of K-matrices.

\section{Classical super-symmetric pairs}
\label{Sec_ClSPhP}
In this section we define  Lie superalgebras $\k\subset \g$ that give rise to
coideal subalgebras in generalization of the Letzter theory to quantum supergroups.
\subsection{Basic classical matrix Lie superalgebras}
\label{Sec_BClLieSAlg}
Let $\g$ be either a general linear or ortho-symplectic  Lie superalgebra. Denote by $V=\C^N$ the natural module of $\g$.
We consider all possible polarizations (triangular decompositions) $\g=\g_-\op \h\op \g_+$, where $\h$ is a  Cartan subalgebra and
$\b_\pm=\h\op \g_\pm$ are Borel subalgebras with nil-radicals $\g_\pm$. The Cartan sublgebra is represented on $V$ by diagonal matrices while
$\b_+$ and $\b_-$ by upper and lower triangular matrices, respectively.

Polarizations  of $\g$ are   induced
by $\Z_2$-gradings of $V$, whose the standard weight basis $\{v_i\}_{i=1}^N$ consists of homogeneous vectors $v_i$ of degree $\bar i\in \{0,1\}$.
The weights of $v_i$ are denoted by $\zt_i$. In the ortho-symplectic case, they are subject to condition $\zt_{i'}=-\zt_i$, where $i'=N+1-i$.
Thus, for  ortho-symplectic $\g$ of odd $N$, the weight $\zt_{\frac{N+1}{2}}$ is zero, and $\deg(v_\frac{N+1}{2})=0$.
The weights $\zt_i$ and $\zt_j$ are pairwise orthogonal unless $i =j$ and, for ortho-symplectic $\g$,  $i=j'$.
Furthermore, $(\zt_i,\zt_i)=(-1)^{\bar i}$, for  $i\not =\frac{N+1}{2}$.

Denote by $n$ the rank of $\g$, which equals $N-1$ for general linear $\g$ and $\left [\frac{N}{2}\right]$ otherwise.
Within the given polarization, the simple positive roots are
$$
     \Pi_{\g\l}=\{\zt_i-\zt_{i+1}\}_{i=1}^{N-1}, \quad
     \Pi_{\s\p\o}=\{\zt_i-\zt_{i+1}\}_{i=1}^{n-1}\cup \{2\zt_n\},
$$
$$
    \Pi_{\o\s\p}=\{\zt_i-\zt_{i+1}\}_{i=1}^{n-1}\cup\{\zt_n\}, \quad \mbox{odd}\>  N,  \quad
     \Pi_{\o\s\p}=\{\zt_i-\zt_{i+1}\}_{i=1}^{n-2}\cup\{\zt_{n-1}\pm \zt_n\},       \quad \mbox{even}\>N.
$$
Simple roots form a basis for the root system $\Rm$, which spits to the subsets of positive and negative roots $\Rm^-\cup \Rm^+$,
with the inclusion $\Pi\subset \Rm^+$.
The basic weights $\{\zt_i\}_{i=1}^n$ generate  the weight lattice $\La=\La_\g$.

Although the algebra $\s\p\o$ is isomorphic to certain $\o\s\p$ of even $N$, the triangular decompositions are different.
We will also use a notation of $\ve_i$ for even weights of the module $V$ and $\dt_i$ for odd.

Like in the theory of simple Lie algebras, the properties of the root basis is encoded in a Dynkin diagram $D_\g$ with a convention
that isotropic odd roots are coloured grey while non-isotropic are black.
Below are examples
of Dynkin diagrams with minimal number of odd roots corresponding to a symmetric grading $\bar i'=\bar i$, $i=1,\ldots, N$ of the module $V$.
\begin{center}
\begin{picture}(330,30)
\put(0,10){\circle{3}}
\put(30,10){\circle{3}}

\put(80,10){\circle{3}}
\put(110,10){\color{gray}\circle*{3}}
\put(140,10){\circle{3}}

\put(82,10){\line(1,0){26}}
\put(112,10){\line(1,0){26}}

\put(142,10){\line(1,0){10}}
\put(178,10){\line(1,0){10}}
\put(160,10){$\ldots$}
\put(190,10){\circle{3}}
\put(220,10){\color{gray}\circle*{3}}
\put(250,10){\circle{3}}

\put(192,10){\line(1,0){26}}
\put(222,10){\line(1,0){26}}

\put(302,10){\line(1,0){26}}

\put(300,10){\circle{3}}
\put(330,10){\circle{3}}

\put(252,10){\line(1,0){10}}
\put(288,10){\line(1,0){10}}
\put(270,10){$\ldots$}

\put(2,10){\line(1,0){26}}
\put(32,10){\line(1,0){10}}
\put(68,10){\line(1,0){10}}
\put(50,10){$\ldots$}

\put(55,14){\smaller[2]$\delta_{\m-1}-\delta_{\m}$}

\put(-15,14){\smaller[2]$\delta_1-\delta_2$}
\put(15,1){\smaller[2]$\delta_2-\delta_3$}

\put(95,1){\smaller[2]$\delta_{\m}-\ve_1$}
\put(125,14){\smaller[2]$\ve_1-\ve_2$}
\put(167,14){\smaller[2]$\ve_{N-1}-\ve_{N}$}

\put(205,1) {\smaller[2]$\ve_{N}-\delta_{\m+1}$}
\put(230,14) {\smaller[2]$\delta_{\m+1}-\delta_{\m+2}$}

\put(280,1) {\smaller[2]$\delta_{2\m-2}-\delta_{2\m-1}$}
\put(310,14) {\smaller[2]$\delta_{2\m-1}-\delta_{2\m}$}

 \end{picture}
\end{center}
\begin{center}
\begin{picture}(200,30)
\put(0,10){\circle{3}}
\put(30,10){\circle{3}}

\put(80,10){\circle{3}}
\put(110,10){\color{gray}\circle*{3}}
\put(140,10){\circle{3}}

\put(82,10){\line(1,0){26}}
\put(112,10){\line(1,0){26}}

\put(142,10){\line(1,0){10}}
\put(178,10){\line(1,0){10}}
\put(160,10){$\ldots$}
\put(190,10){\circle{3}}

\put(2,10){\line(1,0){26}}
\put(32,10){\line(1,0){10}}
\put(68,10){\line(1,0){10}}
\put(50,10){$\ldots$}

\put(55,14){\smaller[2]$\delta_{\m-1}-\delta_{\m}$}

\put(-15,14){\smaller[2]$\delta_1-\delta_2$}
\put(15,1){\smaller[2]$\delta_2-\delta_3$}

\put(94,1){\smaller[2]$\delta_{\m}-\ve_1$}
\put(125,14){\smaller[2]$\ve_1-\ve_2$}
\put(167,14){\smaller[2]$\ve_{\n-1}-\ve_{\n}$}

\put(220,1) {\smaller[2]$\ve_{\n}$}

\put(220,10){\circle{3}}

\put(191,8.5){\line(1,0){24}}
\put(191,11.5){\line(1,0){24}}
\put(211,7){$>$}
 \end{picture}
\end{center}
  \begin{center}

\begin{picture}(120,30)
\put(0,10){\circle{3}}
\put(1.5,10){\line(1,0){27}}
\put(32,10){\line(1,0){10}}
\put(68,10){\line(1,0){10}}
\put(30,10){\circle{3}}
\put(80,10){\circle{3}}
\put(47,10){$\ldots$}

\put(110,10){\circle*{3}}

\put(81,8.5){\line(1,0){24.5}}
\put(81,11.5){\line(1,0){24.5}}
\put(101.5,7){$>$}

\put(107,14){\smaller[2]$\delta_\m$}

\put(0,14){\smaller[2]$\delta_1-\delta_2$}
\put(20,0){\smaller[2]$\delta_2-\delta_3$}
\put(70,0){\smaller[2]$\delta_{\m-1}-\delta_{\m}$}
 \end{picture}
\end{center}

\begin{center}
\begin{picture}(200,30)
\put(0,10){\circle{3}}
\put(30,10){\circle{3}}

\put(80,10){\circle{3}}
\put(110,10){\color{gray}\circle*{3}}
\put(140,10){\circle{3}}

\put(82,10){\line(1,0){26}}
\put(112,10){\line(1,0){26}}

\put(142,10){\line(1,0){10}}
\put(178,10){\line(1,0){10}}
\put(160,10){$\ldots$}
\put(190,10){\circle{3}}

\put(2,10){\line(1,0){26}}
\put(32,10){\line(1,0){10}}
\put(68,10){\line(1,0){10}}
\put(50,10){$\ldots$}

\put(55,14){\smaller[2]$\delta_{\m-1}-\delta_{\m}$}

\put(-15,14){\smaller[2]$\delta_1-\delta_2$}
\put(15,1){\smaller[2]$\delta_2-\delta_3$}

\put(94,1){\smaller[2]$\delta_{\m}-\ve_1$}
\put(125,14){\smaller[2]$\ve_1-\ve_2$}
\put(153,1){\smaller[2]$\ve_{\n-2}-\ve_{\n-1}$}

\put(191.5,11.5){\line(3,2){25}}
\put(191.5,8.5){\line(3,-2){25}}
\put(217.5,29){\circle{3}}
\put(217.5,-9){\circle{3}}

\put(222,-9){\smaller[2]$\ve_{\n-1}-\ve_{\n}$}
\put(222,29){\smaller[2]$\ve_{\n-1}+\ve_{\n}$}
 \end{picture}
\end{center}

\begin{center}
\begin{picture}(100,60)

\put(0,30){\circle{3}}
\put(1.5,30){\line(1,0){27}}

\put(32,30){\line(1,0){10}}
\put(68,30){\line(1,0){10}}
\put(30,30){\circle{3}}
\put(47,30){$\ldots$}
\put(80,30){\circle{3}}
\put(81.5,31.5){\line(3,2){25}}
\put(81.5,28.5){\line(3,-2){25}}
\put(107.5,49){\color{gray}\circle*{3}}
\put(107.5,11){\color{gray}\circle*{3}}

\put(-10,20){\smaller[2]$\delta_1-\delta_2$}
\put(20,35){\smaller[2]$\delta_2-\delta_3$}
\put(50,20){\smaller[2]$\delta_{\m-1}-\delta_\m$}
\put(112,8){\smaller[2]$\delta_\m+\ve_1$}
\put(112,48){\smaller[2]$\delta_\m-\ve_1$}

\put(106.5,12){\line(0,1){36}}
\put(108.5,12){\line(0,1){36}}

\put(170,27)

\end{picture}
\end{center}

\begin{center}
\begin{picture}(200,30)
\put(0,10){\circle{3}}
\put(30,10){\circle{3}}

\put(80,10){\circle{3}}
\put(94,1){\smaller[2]$\delta_{\m}-\ve_1$}
\put(110,10){\color{gray}\circle*{3}}
\put(140,10){\circle{3}}

\put(82,10){\line(1,0){26}}
\put(112,10){\line(1,0){26}}

\put(142,10){\line(1,0){10}}
\put(178,10){\line(1,0){10}}
\put(160,10){$\ldots$}
\put(190,10){\circle{3}}

\put(2,10){\line(1,0){26}}
\put(32,10){\line(1,0){10}}
\put(68,10){\line(1,0){10}}
\put(50,10){$\ldots$}

\put(55,14){\smaller[2]$\delta_{\m-1}-\delta_{\m}$}

\put(-15,14){\smaller[2]$\delta_1-\delta_2$}
\put(15,1){\smaller[2]$\delta_2-\delta_3$}

\put(125,14){\smaller[2]$\ve_1-\ve_2$}
\put(167,14){\smaller[2]$\ve_{\n-1}-\ve_{\n}$}

\put(220,1) {\smaller[2]$2\ve_{\n}$}

\put(220,10){\circle{3}}

\put(194,8.5){\line(1,0){24.5}}
\put(194,11.5){\line(1,0){24.5}}
\put(190,7){$<$}
 \end{picture}
\end{center}
Note with care that topologically isomorphic Dynkin diagrams do not imply isomorphism of Lie superalgebras, see, for instance  $\g\l(2|2)$  and $\o\s\p(4|2)$
(distinguished polarizations with one odd simple root).

Further on  we drop the parity colour  convention in order to avoid conflicts with additional data inherent to Satake diagrams.
The odd nodes will be depicted with squares while the circles will be reserved for even nodes. A node that may carry arbitrary parity
will be denoted with rhombus.

A diagram $D_\g$  with discarded parity of nodes is a valid  non-graded Dynkin diagram (for
odd tail roots of even $\o\s\p$ we also remove double linking arcs). We call such a non-graded diagram the shape of $D_\g$.
Thus we have four different shapes of graded Dynkin diagrams:  $\Ag$, $\Bg$, $\Cg$, and $\Dg$.
By  tail subalgebra $\t\subset \g$ of shape $\Bg,\Cg,\Dg$ we understand the one with the set of simple roots
$\{\zt_n\}$, $\{2\zt_n\}$, and $\{\zt_{n-1}\pm \zt_n\}$, respectively. By  shaft subalgebra $\s\subset \g$ we mean
the one of type $\Ag$ whose simple root basis is complementary to the tail.
We always keep orientation of the total Dynkin diagram placing the tail sub-graph on the right. This convention does not apply
to sub-diagrams, which are understood up to isomorphism of the corresponding subalgebras, like in the  formulation of the selection rules in Section \ref{SecDDD-SR}.

Changing the grading on $V$ to its opposite does not affect $\g$ and its polarization unless $\g$ is odd ortho-symplectic.
 In that case, the grading is fixed by the parity of the tail root.
If it is odd, then it is not isotropic (it is black under the standard convention).
In all other cases odd simple roots are isotropic. The tail root of shape $\Cg$ is always even.

The ortho-symplectic Lie superalgebra $\g$ is defined as the one preserving a bilinear  form
$$
C=\sum_{k=1}^n (e_{k,k'}+\eps_{k} e_{k',k})+e_{\frac{N+1}{2},\frac{N+1}{2}},
$$
where the last term is present only if $N$ is odd.
The coefficients $\eps_{k}$ takes values $\pm 1$ depending on the type of $\g$ and its polarization. They are subject to condition $\eps_{i}\eps_{k}=(-1)^{\bar i+\bar k}$.
The matrix $C$ is even; it is invariant under the $\g$-action
$$
\rho(x)C+C\rho^t(x), \quad x\in \g,
$$
where $t$ is the matrix super-transposition
$A^t_{ij}=(-1)^{\bar i(\bar i+\bar j)}A_{ji}$.
This operation is a super-involutive  anti-automorphism of the graded matrix algebra $\End(V)$.

Root vectors of  orthosymplectic $\g\subset \End(V)$ preserving $C$ can be taken in the form
$$
e_{\zt_k-\zt_m}=-(-1)^{\bar m(\bar k+\bar m)} e_{k,m}+ e_{m',k'}, \quad e_{\zt_k+\zt_m}=-\eps_k (-1)^{\bar k(\bar k+\bar m)} e_{k,m'}+e_{m,k'},
$$
$$
f_{\zt_k-\zt_m}=-(-1)^{\bar k(\bar k+\bar m)} e_{m,k}+ e_{k',m'}, \quad f_{\zt_k+\zt_m}=-\eps_{m}(-1)^{\bar k(\bar k+\bar m)} e_{m',k}+e_{k',m},
$$
for  $1\leqslant k<m\leqslant n$,
and
$$
 e_{2\zt_k}=e_{k,k'},  \quad  f_{2\zt_k}=e_{k',k}, \quad \eps_k=-1, \quad 1\leqslant k\leqslant n,
$$
for even $N=2n$ and
$$
e_{\zt_k}=- e_{k,n+1}+e_{n+1,k'}, \quad f_{\zt_k}=-(-1)^{\bar k} e_{n+1,k}+e_{k',n+1}, \quad \eps_k=(-1)^{\bar k}, \quad 1\leqslant k\leqslant n,
$$
for odd $N=2n+1$.
The Cartan subalgebra is represented by diagonal matrices, while $\g_\pm$ by strictly upper (lower) triangular matrices,  as well as for the general linear $\g$.

\subsection{Weyl operator and spherical data }
\label{SecWOp}
 Let $\g$ be a Lie superalgebra that features a triangular decomposition with Cartan subalgebra $\h$,  and let  $\b\subset \g$ be a Borel subalgebra
 containing $\h$.
\begin{definition}
A Lie super-algebra $\k\subset \g$ is called spherical if $\g=\k+\b$.
Then the pair $(\g,\k)$ is called spherical.
\end{definition}
It is known that, depending on the polarization
 Borel subalgebras in $\g$ are generally  not isomorphic.
Thus, contrary to the non-graded case, this definition of sphericity substantially depends
upon a choice of $\b$.
From now on we restrict our consideration to the case when $\g$ is either general linear or ortho-symplectic.
The choice of $\b$ is determined by a grading of the underlying natural module.

\begin{definition}
\label{Weyl-operator}
 A unique $\Z$-linear map $w_\g\colon \La\mapsto \La$ defined by the assignment
$\zt_i\mapsto \zt_{i'}$, $i=1,\ldots, N,$ is called
Weyl operator.
\end{definition}
Contrary to \cite{AMS}, we do not restrict ourselves to even $w_\g$ (preserving parity of the roots) and  allow for an arbitrary grading of the underlying vector space $V$.
For $\g$ of type $\Ag$ that implies that the highest and lowest weights of $V$ have the same parity if and only if the number of odd simple roots is even.
Otherwise the highest (lowest) weights of $V$ and its dual $V^*$ have opposite parities and $w_\g$ is not even.

 \begin{lemma}
\label{transposition}
The Weyl operator  $w_\g$  preserves the  weight lattice $\La$,  and the root system $\Rm$. Furthermore,
$w_\g(\Pi)=-\Pi$.
\end{lemma}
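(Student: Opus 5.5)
The plan is to verify the three claims directly from Definition \ref{Weyl-operator}, working with the explicit weights $\zt_i$ and splitting into the general linear and the ortho-symplectic cases. A preliminary point needs care: the assignment $\zt_i\mapsto\zt_{i'}$ must actually define a $\Z$-linear map. For general linear $\g$ the $\zt_i$, $i=1,\ldots,N$, form a basis of $\La$, so the assignment is a permutation of a basis; it extends uniquely and is an automorphism of $\La$. For ortho-symplectic $\g$ the basis is $\{\zt_i\}_{i=1}^n$, and the assignment sends $\zt_i\mapsto\zt_{i'}=-\zt_i$, so $w_\g=-\id$ on $\La$. This is consistent with the images prescribed for $i>n$, because $\zt_{i'}=-\zt_i$ gives $\zt_{(i')'}=\zt_i=-\zt_{i'}$, and with the middle index for odd $N$, where $\zt_{\frac{N+1}{2}}=0$. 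In either case $w_\g(\La)=\La$.

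\emph{Root system.} For $\g\l$, the roots are $\zt_i-\zt_j$ with $i\ne j$, and $w_\g(\zt_i-\zt_j)=\zt_{i'}-\zt_{j'}$, which is again a root since $i\mapsto i'$ is a bijection of $\{1,\ldots,N\}$. For ortho-symplectic $\g$, the map is $w_\g=-\id$, and the root system of $\o\s\p$ is symmetric under $\al\mapsto-\al$, since with each root vector $e_\al$ it contains $f_\al$ of weight $-\al$. Hence $w_\g(\Rm)=\Rm$.

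\emph{Simple roots.} For $\g\l$, compute
$$w_\g(\zt_i-\zt_{i+1})=\zt_{N+1-i}-\zt_{N-i}=-(\zt_{j}-\zt_{j+1}),\quad j=N-i,$$
and $i\mapsto N-i$ permutes $\{1,\ldots,N-1\}$, so $w_\g(\Pi)=-\Pi$. For ortho-symplectic $\g$, $w_\g=-\id$ gives $w_\g(\Pi)=-\Pi$ immediately.

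The main obstacle, such as it is, lies in the well-definedness step for ortho-symplectic $\g$: checking that the prescriptions for all $N$ indices agree with a single linear map on the rank-$n$ lattice. The parity of the weights plays no role anywhere, because $w_\g$ acts on weights and ignores the grading. This is why the lemma holds even when $w_\g$ is not even.
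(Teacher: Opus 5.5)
Your proposal is correct and takes essentially the same route as the paper, whose proof simply appeals to the construction of $w_\g$ and to the explicit description of $\La$, $\Rm$ and $\Pi$ in Section~\ref{Sec_BClLieSAlg}. You spell out the details the paper leaves implicit: that $w_\g$ permutes the basis $\{\zt_i\}_{i=1}^N$ in the $\g\l$ case and equals $-\id$ in the ortho-symplectic case, and that $\zt_i-\zt_{i+1}\mapsto -(\zt_{N-i}-\zt_{N-i+1})$.
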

\begin{proof}
$w_\l$-invariance of $\La$ is due to the very construction. With regard to $\Rm$ and $\Pi$, the assertion
follows from the explicit description of the root systems given in Section \ref{Sec_BClLieSAlg}.
\end{proof}
\noindent
Remark that $-w_\g=\id$ in the case of ortho-symplectic (symplecto-orthogonal) $\g$.
In the case of general linear $\g$, the operator $-w_\g$, in general, produces a different, although topologically isomorphic
Dynkin diagram, amounting to  isomorphic Borel subalgebras.


\begin{definition}
  The grading of the underlying module $V$ is called symmetric if $\deg(v_i)=\deg(v_{i'})$, for all $i=1,\ldots,N$.
\end{definition}
\noindent
We also call symmetric the  induced polarization of $\g$.
In such a polarization, the Weyl operator preserves the parity of weights and roots.
The grading is always symmetric for $\g$ of type $\Bg,\Cg,\Dg$.

Pick a subset $\Pi_\l\subset \Pi$,  put  $\bar \Pi_\l=\Pi \backslash \Pi_\l$, and generate a  subalgebra $\l=\langle e_\al,f_\al\rangle_{\al\in \Pi_\l}\subset \g$.
It is a direct sum of subalgebras, $\l=\sum_{i}\l_i$, corresponding to connected components of $\Pi_\l$.
If $\l_i$ is of type $\Ag$, then we set $\hat \l_i\subset \g$ to be the natural $\g\l$-extension of $\l_i$, and leave $\hat \l_i=\l_i$ otherwise.
Denote by $\hat \l=\op_i \hat\l_i\subset \g$ and by $\h_{\hat\l}^*$ its Cartan subalgebra.
The restriction of the canonical inner product from $\h^*$ to $\h^*_{\hat \l}$ is non-degenerate.

For the $i$-th connected component of $\Pi_\l$ put  $w_{\l_i}=w_{\hat \l_i}$ and  extend it to   $\La$ as  identical on the orthogonal complement
 to $\La_{\hat \l_i}$.
We  define the Weyl operator $w_\l\in \End(\La)$  of the subalgebra $\hat \l$
as $w_\l=\prod_i w_{\l_i}$.
Clearly $w_\l$ is involutive and preserves the weight lattice and root system of $\l$.
We use the same symbol to denote the  extension of  $w_\l$ to $\h^*=\La\tp_\Z \C$.
\begin{definition}
The subset  $\Pi_\l$ and the subalgebra $\l$ are called regular if $w_\l$ preserves the root system $\Rm_\g$.
\end{definition}
\noindent
Otherwise the subalgebra $\l$ and  subset  $\Pi_\l$ are called irregular. It turns out that such an anomaly may occur only in $\g$ of shape $\Dg$.
\begin{propn}
  Suppose that sub-diagram $D_\l$ is connected. Then $\l$ is irregular if and  only if $\g\in \Dg$, $\Pi_\l=\{\al_k,\ldots \al_{n-1}\}$ with
  $k<n$, and the polarization of $\l\in \Ag$ is not symmetric.
\end{propn}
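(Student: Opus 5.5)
Since $D_\l$ is connected, $\l$ is simple of one of the shapes and $w_\l$ is built from a single block. The plan is a case analysis on the shape of $\l$ and on where $\Pi_\l$ sits in $D_\g$. In each case we compute $w_\l$ on the basic weights $\zt_i$ and test whether it maps $\Rm_\g$ to itself, using the explicit root systems of Section \ref{Sec_BClLieSAlg}. In the general linear case $\Rm_\g$ consists of all $\zt_i-\zt_j$, $i\neq j$. In the ortho-symplectic case it consists of $\pm\zt_i\pm\zt_j$, together with $\pm\zt_i$ for odd $N$ and $\pm 2\zt_i$ where allowed by the parity/type.

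\emph{Step 1 (shaft components).} Suppose $\Pi_\l=\{\al_k,\ldots,\al_m\}$ is a segment of the $\Ag$ part. Then $\hat\l\simeq\g\l$ acts on the span of $\zt_k,\ldots,\zt_{m+1}$, and $w_\l$ is the permutation reversing the indices $k,\ldots,m+1$, extended by the identity on the complement. Such a permutation of the $\zt_i$ preserves every root set of the form $\{\pm\zt_i\pm\zt_j,\pm\zt_i,\pm2\zt_i\}$, except possibly $2\zt_i$. Even that set is preserved as long as we never send $\zt_i$ to $\zt_j$ of a different parity when $2\zt_i$ or $2\zt_j$ is a root.

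Here lies the subtlety. In $\Bg$ and $\Cg$, any $\zt_i$ in the segment has $2\zt_i$ or $\zt_i$ behaving uniformly: $\zt_i$ is a root for all $i$ in $\Bg$, and in $\Cg$ the set $2\zt_i$ exists for even $\zt_i$ and as a $2\dt$-type even root. I need to check that $2\zt_i\in\Rm$ for every $i$ in $\o\s\p$ of types $\Bg,\Cg$. This holds since $2\dt_i$ (odd $\dt$ in $\o\s\p$ meaning symplectic indices) or $2\ve_i$ is a root depending on convention. In $\Dg$ (even $N$, $\eps$ conventions), $2\zt_i$ is a root exactly for the symplectic-type indices. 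So a non-parity-preserving permutation breaks $\Rm$ unless it cannot reach those indices.

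For a segment not containing the tail, or in shapes $\Bg,\Cg$ where $2\zt_i$ or $\zt_i$ appears uniformly, we conclude regularity. In $\Dg$ with a segment ending at $\al_{n-1}$ (so indices $k,\ldots,n$), the reversal maps $\zt_k\mapsto\zt_n$. This breaks $2\zt_k\mapsto 2\zt_n$ precisely when $\bar k\neq\bar n$; more generally it fails when the grading of $\l$ restricted to $\zt_k,\ldots,\zt_n$ is not symmetric. Segments ending earlier, at $\al_m$ with $m<n-1$, also permute indices of mixed parity. I expect this to be the main obstacle: one has to argue why only the terminal segment matters. I would check whether $\Dg$'s $2\zt_i$ roots occur only for certain $i$ (those with $\eps_i=-1$), and verify the claim against the paper's intended $\Dg$ root system, where indeed $2\zt_i\in\Rm$ iff $\eps_i=-1$. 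If that makes interior segments irregular too, I would re-read the definition of $\Dg$ to see which indices can be odd there.

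\emph{Step 2 (tail components).} If $\Pi_\l$ contains the tail root, then $\l$ is itself ortho-symplectic of the same shape on indices $k,\ldots,n,n',\ldots,k'$. In that case $w_\l=-\id$ on that span and the identity elsewhere. Negation of some coordinates clearly preserves every root set of the form $\pm\zt_i\pm\zt_j$, $\pm\zt_i$, $\pm2\zt_i$, so $\l$ is regular. Combining Steps 1 and 2 gives both directions of the equivalence.
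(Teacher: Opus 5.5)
\textbf{There is a genuine gap in Step~1, and it cannot be closed.} Your case analysis is the same as the paper's. However, Step~1 rests on a false claim, and the obstacle you flagged there is real: the ``only if'' direction of the statement fails.

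From the form $C$ one checks that $e_{k,k'}\in\g$ if and only if $\eps_k=-1$, so $2\zt_k\in\Rm$ exactly when $\eps_k=-1$. Since $\eps_i\eps_k=(-1)^{\bar i+\bar k}$, the indices $k$ with $2\zt_k\in\Rm$ are precisely the indices of one fixed parity:
\begin{itemize}
\item $\bar k=1$ for odd $N$;
\item $\bar k=\bar n$ in shape $\Cg$;
\item $\bar k\neq\bar n$ in shape $\Dg$.
\end{itemize}
So there is no uniformity in shapes $\Bg$ or $\Cg$. For every connected $\Pi_\l$ of type $\Ag$ in ortho-symplectic $\g$, wherever it sits, the index reversal $w_\l$ preserves $\Rm$ if and only if it preserves parities, i.e.\ iff the polarization of $\l$ is symmetric.

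Two counterexamples:
\begin{itemize}
\item In $\o\s\p(3|2)$ with $\Pi=\{\dt-\ve,\ve\}$ and $\Pi_\l=\{\dt-\ve\}$, $w_\l$ swaps $\dt$ and $\ve$. It sends the root $2\dt$ to $2\ve\notin\Rm$, so $\l$ is irregular although $\g\notin\Dg$.
\item In $\o\s\p(4|2)$ with $\Pi=\{\dt-\ve_1,\ve_1-\ve_2,\ve_1+\ve_2\}$, the interior segment $\Pi_\l=\{\dt-\ve_1\}$ of shape $\Dg$ is irregular in the same way.
\end{itemize}
Both are just a single grey node $\dt_\m-\ve_1$ in the paper's own displayed diagrams.

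\textbf{Comparison with the paper.} The paper's proof gets past this point only by asserting that $\l$ is ``clearly'' regular when $\l\subset\s$. The half about $\t\subset\l$ is your Step~2 and is fine. The shaft half fails for exactly the reason above, and the paper's own criterion $2\zt_i\in\Rm\iff\bar i\neq\bar n$ already refutes it. The same applies to the preceding remark that irregularity occurs only in shape $\Dg$.

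\textbf{What you can actually prove.} Your argument legitimately proves the ``if'' direction; your parity computation for the terminal $\Dg$ segment is the paper's argument. It also proves the corrected characterization: for connected $D_\l$, $\l$ is irregular iff
\begin{itemize}
\item $\g$ is ortho-symplectic,
\item $\Pi_\l$ does not contain the whole tail (so $\l$ is of type $\Ag$), and
\item the polarization of $\l$ is not symmetric.
\end{itemize}
State and prove this version rather than trying to rescue the interior case. One more fix when you write it up: in shape $\Dg$, a segment ending at $\al_n$ instead of $\al_{n-1}$ is also of type $\Ag$, with $w_\l$ reversing $\zt_k,\ldots,\zt_{n-1},-\zt_n$. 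So in Step~2, ``contains the tail root'' should read ``contains both tail roots''.
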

\begin{proof}
  Clearly $\l$ is regular if $\t\subset \l$ or if $\l\subset \s$. Thus the only case to consider is shape $\Dg$ and $\l$ as in the hypothesis.
  The only roots that can be taken out of $\Rm_\g$ by $w_\l$ are $2\zt_i$ for some $i$. But $2\zt_i\in \Rm_\g$ if and only if $\zt_i$ and  $\zt_n$ carry different
  parities, see Section \ref{Sec_irreg}. In other words, all $\zt_i$ such that $2\zt_i\in \Rm$ have the same parities.  Thus $\Rm_\g$ is
  preserved if and only if $w_\l$ preserves the parities  of all $\zt_i$, $i=k,\ldots, n$.
  \end{proof}
\noindent
Note with care that $w_\l$ is not an isometry in general.

Consider vector subspaces
$$\mathfrak{m}_+=\sum_{\al\in \Rm^+_{\g}\>- \>\Rm^+_{\l}} \g_\al,
\quad
\mathfrak{m}_-=\sum_{\al\in \Rm^+_{\g}\>- \>\Rm^+_{\l}} \g_{-\al},
$$
as graded $\l$-modules.
For $\al\in \bar \Pi_\l$ let $V^\pm_\al\subset \m_\pm$ denote the $\l$-submodule generated by $e_{\pm \al}\in \g_{\pm \al}$.
It is a tensor product of modules over  the subalgebras in $\l$ corresponding the connected components in $D_\l$: the ones
of minimal dimension or its skew-(super)symmetrized tensor square over a component of general linear type, see \cite{Serg}.
\begin{lemma}
\label{high-low}
Suppose that $\l$ is regular. Then
\begin{itemize}
  \item[i)]For each $\al\in \bar \Pi$, the $\l$-module $V^\pm_\al$ is irreducible and determined by its lowest (highest) weight.
  \item[ii)] The operator $w_\l$ flips  the highest and lowest weights of $V^\pm_\al$ for each $\al \in \bar \Pi_\l$.
\end{itemize}
\end{lemma}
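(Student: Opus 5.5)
Since $\g$ and its Cartan subalgebra are realized by matrices in $\End(V)$, I will work with the explicit form of the root vectors from Section \ref{Sec_BClLieSAlg}. Fix $\al\in\bar\Pi_\l$. Every root in $\Rm^+_\g\setminus\Rm^+_\l$ that lies in $\al+\mathbb{Z}_{\ge 0}\Pi_\l$ has the form $\zt_i\mp\zt_j$, with $i$ and $j$ running over index blocks attached to the connected components of $D_\l$ adjacent to $\al$. This gives the concrete description quoted before the lemma. Let $\l_1,\l_2$ be the (at most two) components joined to $\al$. Then $V^+_\al$ is the span of $e_{\zt_i-\zt_j}$ with $i$ in the block of $\l_1$ and $j$ in the block of $\l_2$, so it is a tensor product of the natural module of $\hat\l_1$ and the dual of the natural module of $\hat\l_2$. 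The remaining cases are these: for a $\Bg,\Cg,\Dg$-type component containing the tail, one factor is the natural module of that ortho-symplectic component. When $\al$ is adjacent to the tail side and the component is of type $\Ag$, with roots $\zt_i+\zt_j$, the module is the skew-supersymmetric square (type $\Dg$), the supersymmetric square (type $\Cg$), or, in type $\Bg$, the natural module together with the zero weight. The case $\al=\zt_{n-1}+\zt_n$ in $\Dg$ is handled like the $\zt_i-\zt_j$ case after relabeling.

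For (i), I check irreducibility on each type. The natural modules of $\g\l(p|q)$ and $\o\s\p$ are irreducible. A tensor product of irreducible modules over distinct direct summands is irreducible (the endomorphism algebras are the full matrix algebras). This holds except possibly for odd-type factors, but $V$ has no $Q$-type factors here. The super-symmetric and skew-symmetric squares of the natural $\g\l(p|q)$-module are irreducible, since $\g\l$ acts through the tensor square and the super Schur--Weyl duality of Sergeev \cite{Serg} splits $V^{\tp 2}$ into exactly these two irreducible summands. Irreducible finite-dimensional modules are determined by extremal weights. Moreover, $e_{\pm\al}$ is annihilated by $f_\beta$ (resp.\ $e_\beta$) for $\beta\in\Pi_\l$, because $\al-\beta\notin \Rm$ when $\al$ and $\beta$ are distinct simple roots. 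So $e_\al$ is a lowest weight vector generating $V^+_\al$, and $f_\al$ is a highest weight vector generating $V^-_\al$.

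For (ii), the weights of $V^\pm_\al$ are of the form $\pm(\zt_i\mp\zt_j)$ described above. On the block of $\hat\l_k$, the operator $w_\l$ acts by reversing indices within that block. For $\Bg,\Cg,\Dg$ components, or $\hat\l_k$ whose extension is $\o\s\p$-type, it acts by $\zt_i\mapsto-\zt_i$ on the block, which by the remark after Lemma \ref{transposition} is $w_{\hat\l_k}$. The lowest weight of $V^+_\al$ is $\al=\zt_a-\zt_b$, where $a$ is the last index of the first block and $b$ the first index of the second. The highest weight is $\zt_{a_0}-\zt_{b_1}$, with $a_0$ the first index and $b_1$ the last. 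Index reversal swaps these. The square-type and tail cases are analogous, using that reversal maps the minimal index pair to the maximal one. Applying Lemma \ref{transposition} to each $\hat\l_k$ gives $w_{\l_k}(\Pi_{\l_k})=-\Pi_{\l_k}$, which confirms that extremal weights go to extremal weights.

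The main obstacle is the parity subtlety. The extremal weights of a $\g\l(p|q)$-module depend on the chosen (possibly non-distinguished) Borel, and for the square modules some would-be weights such as $2\zt_i$ drop out when $\zt_i$ is odd, or are doubled. This is exactly where regularity is needed. In the irregular $\Dg$ case, $w_\l$ can send $2\zt_i\in\Rm$ to $2\zt_j\notin\Rm$, and the top weight of the skew-supersymmetric square is not the image of the bottom one. I will use the regularity hypothesis, via the preceding proposition, to guarantee that on a $\Dg$-adjacent $\Ag$ component the parities of $\zt_k,\dots,\zt_n$ are symmetric under reversal. Then the actual weight set of $\Lambda^2_{\rm super}$ is $w_\l$-stable, and its extremal points are interchanged.
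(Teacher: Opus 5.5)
Your route is the paper's: identify each $V^\pm_\al$ explicitly, then read off its extremal weights. The cases are tensor products of natural modules of the components, the natural module of an ortho-symplectic tail component (such as the $\C^6$ for a black $\{\al_{n-2},\al_{n-1},\al_n\}$), and the super-symmetric or skew-symmetric square next to the tail. You are more complete than the paper. Its proof claims that outside shape $\Dg$ every $V^\pm_\al$ is a tensor product of minimal modules. That overlooks the $\Cg$-case of a white tail root $\al_n=2\zt_n$ next to a black component $\{\al_k,\ldots,\al_{n-1}\}$, where $V^+_{\al_n}$ is a supersymmetric square. You list this case.

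However, your ``analogous'' does not cover exactly that case. The lowest weight there is $2\zt_n$ and $w_\l(2\zt_n)=2\zt_k$. This is the highest weight only if $2\zt_k\in\Rm$, i.e. $\bar k=\bar n$; otherwise the highest weight is $\zt_k+\zt_{k+1}$.

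For instance, take shape $\Cg$ of rank $2$ with $\bar 1\neq \bar 2$, black $\al_1$ and white $\al_2=2\zt_2$. Then $V^+_{\al_2}$ has weights $2\zt_2$ and $\zt_1+\zt_2$, while $w_\l(2\zt_2)=2\zt_1$ is not even a root. You plan to get parity symmetry from the preceding proposition. But that proposition only addresses shape $\Dg$, and its proof declares every $\l\subset\s$ regular. Taken at face value, it would make this example a counterexample to ii).

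Argue from the definition of regularity instead. Your identification shows that the weight set of $V^+_\al$ is $\Rm_\g\cap(\al+\Z\Pi_\l)$, and $w_\l(\al)-\al\in\Z\Pi_\l$. So a $w_\l$ preserving $\Rm_\g$ permutes this set. Since $w_\l(\Pi_\l)=-\Pi_\l$ reverses the $\Z_{\geqslant 0}\Pi_\l$-order, $w_\l$ must exchange the unique maximal and minimal weights. This proves ii) uniformly, including your $\Dg$ square, and shows the example above is simply irregular.

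Finally, correct your $\Bg$ description. For a black $\{\al_k,\ldots,\al_{n-1}\}$ next to a white $\al_n=\zt_n$, the module $V^+_{\al_n}$ has weights $\zt_k,\ldots,\zt_n$ only, because the roots $\zt_i+\zt_j$ have $\al_n$-coefficient $2$. There is no zero weight; as written, ``natural module together with the zero weight'' would be reducible and contradict i). A zero weight occurs only in the natural module of an odd ortho-symplectic tail component, which your tensor-product case already covers.
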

\begin{proof}
The subalgebra $\l$ is a sum of $\l=\sum_{i} \l_i$ of basic Lie superalgebras $\l_i$ corresponding to
connected components of the Dynkin diagram $D_\l$.
In all cases excepting $\g\in \Dg$, the modules $V^\pm_\al$ are tensor products of smallest fundamental $\l_i$-modules,
for which the lemma is obviously true.

We are left to consider  $\g\in \Dg$ and we may assume that $D_\l$ is connected.
The module which is not minimal for $\l$  may occur in the following   two cases.

a) If $\Pi_{\l}=\{\al_{n-2},\al_{n-1},\al_{n}\}$, then the non-trivial $\l$-submodule in $\mathfrak{m}_\pm $  is $V^\pm_{\al_{n-3}}\simeq \C^6$, for which the statement is obvious.

b) Another possibility is when $\al_{n-1}\in \Pi_{\l}$, $\al_{n}\in \bar \Pi_\l$
(or the other way around) and the polarization of $\l$ is  symmetric. Then the highest  weight in  $V^+_{\al_n}$
is $\zt_1+\zt_2$, see Section \ref{Sec_irreg}, while the lowest is $\al_n=\zt_{n-1}+\zt_n$. They are flipped by $w_\l$, which proves ii).
This module is also known to be irreducible, self-dual, and determined by its highest weight.  This proves i).
 \end{proof}

Suppose that  $\tau \in \Aut(\bar \Pi_\l)$  is a permutation and let $\tilde \al$ denote the highest weight of $V_{\tau(\al)}^+$.
Suppose that $\tilde \al$ and $\al$ have the same parity.
\begin{definition}
\label{triple}
  The triple $(\g,\l, \tau)$ is called super-symmetric if
\be
(\mu+\tilde\mu,\al)&=0,& \quad \forall \mu\in \bar \Pi_\l, \quad \al\in \Pi_{\l},
\label{1nd-cond}\\
(\mu+\tilde\mu,\nu-\tilde \nu)&=0,& \quad \forall \mu,\nu\in \bar \Pi_\l.
\label{2nd-cond}
\ee
\end{definition}
\noindent
These identities admit the following algebraic interpretation.
Condition  (\ref{1nd-cond}) means that the root vectors $e_\al$ and $f_{\tilde \al}$ have the same
transformation properties under the adjoint action of $\l$ and their linear combination generates
a submodule $\simeq V^+_\al$.
Condition (\ref{2nd-cond}) is needed for quantization. In order to make comultiplication on positive and negative
 components of the mixed generator compatible,  one has to extend $\l$ with elements $h_\al-h_{\tilde \al}$ (we mean by $h_\la\in \h$ the dual
element to $\la\in \h^*$ with respect to the inner product on $\h^*$).

Our goal is to find all $(\Pi_\l,\tau)$ solving the system (\ref{1nd-cond}--\ref{2nd-cond}). The Weyl operator introduced above has been
specially devised for this task. However, it features the required properties only for regular $\Pi_\l$.
The case of irregular $\Pi_\l$ will be treated directly in Section \ref{Sec_irreg}. Until then
we assume that $\Pi_\l$ is regular.
Then we can  extend $\tau$ as $-w_\l$ on $\Pi_\l$ and regard it as  a permutation on $\Pi$.
Moreover, we extend $\tau$ as $-w_\l$ on $\La_\l$ thus making it a $\Z$-linear automorphism of the total weight lattice $\La=\Z \bar \Pi_\l+\La_\l$. 
By Lemma \ref{high-low}, we can set $\tilde \al =w_\l\circ \tau(\al)\in \Rm^+$ for all $\al\in \bar \Pi_\l$.
\begin{lemma}
\label{tau-commutes w}
Suppose that $V^+_\al\simeq V^-_{\tau(\al)}$ for all $\al\in \bar \Pi_\l$. Then  $\tau$ commutes with $w_\l$ as a $\Z$-linear endomorphism of the root lattice.
\end{lemma}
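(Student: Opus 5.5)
The plan is to verify the identity $\tau w_\l=w_\l\tau$ on the basis $\Pi=\Pi_\l\cup\bar\Pi_\l$ of the root lattice. Both sides are $\Z$-linear, so this suffices. The two kinds of basis vectors behave quite differently, so I treat them separately.

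\emph{Roots in $\Pi_\l$.} On $\Z\Pi_\l$ we have, by construction, $\tau=-w_\l$. Also $w_\l$ preserves $\Z\Pi_\l$: each factor $w_{\l_i}$ maps the simple roots of $\l_i$ to minus simple roots of $\l_i$ by Lemma \ref{transposition} applied to $\hat\l_i$, and it is the identity on the other components. Hence for $\al\in\Pi_\l$ we get $\tau w_\l(\al)=-w_\l^2(\al)=-\al=w_\l\tau(\al)$, since $w_\l$ is an involution.

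\emph{Roots in $\bar\Pi_\l$.} Fix $\al\in\bar\Pi_\l$. By Lemma \ref{high-low}, $\al$ is the lowest weight of $V^+_\al$ and $w_\l(\al)$ is its highest weight. Put $\la=w_\l(\al)-\al$. This is a difference of two weights of the cyclic $\l$-module $V^+_\al$, so $\la\in\Z\Pi_\l$. Since $w_\l$ is involutive, $w_\l(\la)=\al-w_\l(\al)=-\la$.

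Similarly, $V^-_{\tau(\al)}$ is generated by $f_{\tau(\al)}$ of weight $-\tau(\al)$, which is its highest weight. By Lemma \ref{high-low}, its lowest weight is $-w_\l\tau(\al)$. The hypothesis $V^+_\al\simeq V^-_{\tau(\al)}$ identifies the extremal weights of the two modules, so the two values of "highest minus lowest" coincide:
$$
\la=w_\l\tau(\al)-\tau(\al).
$$
Now compute, using $\tau|_{\Z\Pi_\l}=-w_\l$:
$$
\tau w_\l(\al)=\tau(\al)+\tau(\la)=\tau(\al)-w_\l(\la)=\tau(\al)+\la=w_\l\tau(\al).
$$
This finishes the basis check.

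\emph{Main obstacle.} The delicate step is justifying that the module isomorphism forces $\la=w_\l\tau(\al)-\tau(\al)$ as an equality in $\Z\Pi_\l$, not merely an equality of restrictions to the Cartan subalgebra $\h\cap\l$. The difficulty arises when a component $\l_i$ has degenerate Cartan matrix, e.g.\ of type $\g\l(k|k)$ with the $\s\l$ part. There, elements of $\Z\Pi_{\l_i}$ are not determined by their values on $\h\cap\l_i$.

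I would handle this as follows. Both differences lie in $\Z\Pi_\l$, the lattice spanned by the weights $\zt_j$ with indices in the block of $\hat\l_i$. Moreover $w_{\l_i}$ acts on that block by the index reversal of Definition \ref{Weyl-operator}. So I would compare the two differences in the $\zt$-coordinates of that block. I expect the isomorphism to be compatible with $\hat\l_i$, since both modules are tensor products of natural modules, their duals, or their (skew) squares, which are naturally $\hat\l_i$-modules. I would check this block by block, using the explicit form of $V^\pm$ from Lemma \ref{high-low}. Granting this, highest minus lowest weight is an intrinsic invariant computed in $\La_{\hat\l}$, and the equality holds.
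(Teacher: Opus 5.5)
Your proof is correct and is essentially the paper's. The paper likewise notes that $\tau=-w_\l$ commutes with $w_\l$ on the $\l$-part, writes $w_\l(\mu)=\mu+\eta$ with $\eta\in\Z_+\Pi_\l$ and $w_\l(\eta)=-\eta$, uses the module isomorphism to get $w_\l(\tau(\mu))=\tau(\mu)+\eta$, and concludes with the same computation $\tau(w_\l(\mu))=\tau(\mu)-w_\l(\eta)=w_\l(\tau(\mu))$.

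The ``main obstacle'' you flag is handled in the paper only by the remark that $\eta$ depends on the projection to $\h^*_{\hat\l}$. In fact it dissolves without any block-by-block check. Your $\la$ equals $\bt_1+\dots+\bt_r$ for any nonzero raising word $e_{\bt_1}\cdots e_{\bt_r}$ ($\bt_i\in\Pi_\l$) that takes the lowest vector of $V^+_\al$ to its highest one. An $\l$-isomorphism carries this word to one doing the same in $V^-_{\tau(\al)}$, whose extremal lines are unique and hence $\h$-weight lines. So $\la=w_\l\tau(\al)-\tau(\al)$ holds in $\Z\Pi_\l\subset\h^*$ regardless of any degeneracy of the Cartan matrix.
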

\begin{proof}
The reproduce the proof of analogous statement for an even $w_\l$ given in \cite{AMS}.
By construction, $\tau$ and $w_\l$ commute when restricted to $\h^*_\l$.
It suffices to check that also for simple roots from $\bar \Pi_\l$.

By Lemma \ref{high-low},  $w_\l$ takes the highest weight of an irreducible $\l$-module $V^\pm_\mu$, $\mu\in \bar \Pi_\l$, to the lowest weight and vice versa.
For each $\mu\in \bar \Pi_\l$ we have $w_\l(\mu)=\mu+\eta$ for some weight $\eta\in \Z_+\Pi_\l$ that satisfies
 $w_\l(\eta)=-\eta$ because $w_\l^2=\id$. The weight $\eta$ depends only on the projection of $\tau(\mu)$ to $\h^*_\l$,
therefore
$- \tau(\mu)=w_\l(-\tilde \mu)=w_\l\bigl(-\tau(\mu)\bigr)+\eta$ or
$\tau(\mu)+\eta=w_\l\bigl(\tau(\mu)\bigr)$. Here we used the hypothesis of the lemma.
Then, since $\tau(\eta)=-w_\l(\eta)$ for all $\eta\in \h^*_\l$,
$$
\tau\bigl(w_\l(\mu)\bigr)=\tau(\mu+\eta)=\tau(\mu)+\tau(\eta)=\tau(\mu)-w_\l(\eta)=\tau(\mu)+\eta=w_\l\bigl(\tau(\mu)\bigr),
$$
as required.
\end{proof}
\noindent
Remark that the hypothesis of this lemma is equivalent to condition (\ref{1nd-cond}) thanks to Lemma \ref{high-low} i).

Define a  linear map $\theta=-w_\l\circ \tau\colon \h^*\to \h^*$. It preserves $\Rm$ because so do $\tau$ and $w_\l$. Furthermore,
$\theta(\al)=-\tilde \al$ for $\al \in \bar \Pi_\l$
and $\theta(\al)=\al$ for $\al \in \Pi_\l$.
 The system of equalities  (\ref{1nd-cond}) and (\ref{2nd-cond})
 translates to
\be
\label{gen-cond}
\bigl(\al+\theta(\al),\bt-\theta(\bt)\bigr)=0,\quad \forall \al,\bt\in \Pi.
\ee

\begin{propn}
\label{tau-invol-orth}
Condition (\ref{gen-cond})
is  fulfilled if and only if the permutation $\tau$ is involutive, commutes with $w_\l$, and the composition  $\theta=-w_\l\circ \tau$ extends to
 an involutive isometry on $\h^*$.
\end{propn}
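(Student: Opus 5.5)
The plan is to work entirely with the linear map $\theta=-w_\l\circ\tau$ on $\h^*$ and the non-degenerate canonical form on $\h^*$. First I reduce (\ref{gen-cond}) to a statement about $\theta$ alone. Then I translate involutivity of $\theta$ back into involutivity of $\tau$, using Lemma \ref{tau-commutes w}.

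\emph{Step 1: (\ref{gen-cond}) is equivalent to $\theta$ being a self-adjoint isometry.} Since $\Pi$ spans $\h^*$ and both sides of (\ref{gen-cond}) are bilinear in $(\al,\bt)$, condition (\ref{gen-cond}) is equivalent to
$$
(x,y)-(x,\theta y)+(\theta x,y)-(\theta x,\theta y)=0,\qquad \forall x,y\in\h^*.
$$
Setting $x=y$, the middle terms cancel because the form is symmetric. This gives $(x,x)=(\theta x,\theta x)$, so by polarization $\theta$ is an isometry. Substituting the isometry back into the identity leaves $(\theta x,y)=(x,\theta y)$, so $\theta$ is self-adjoint. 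Non-degeneracy of the form then gives $\theta^{-1}=\theta^*=\theta$, so $\theta$ is an involutive isometry.

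Conversely, an involutive isometry satisfies $\theta^*=\theta^{-1}=\theta$. The four terms of the expanded identity then cancel pairwise, so (\ref{gen-cond}) holds. Hence (\ref{gen-cond}) is equivalent to $\theta$ being an involutive isometry. I will double-check that $\h^*$ carries a non-degenerate form in all cases, including $\g\l$ with the $\zt_i$ pairwise orthogonal and $(\zt_i,\zt_i)=\pm1$; this appears unproblematic.

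\emph{Step 2: (\ref{gen-cond}) implies that $\tau$ commutes with $w_\l$.} Take $\al\in\Pi_\l$, so $\theta(\al)=\al$. Take $\bt=\mu\in\bar\Pi_\l$, so $\bt-\theta(\bt)=\mu+\tilde\mu$. Then (\ref{gen-cond}) reads $2(\al,\mu+\tilde\mu)=0$, which is exactly (\ref{1nd-cond}). For $\bt\in\Pi_\l$ the condition is empty. By the remark after Lemma \ref{tau-commutes w}, condition (\ref{1nd-cond}) is equivalent to the hypothesis $V^+_\al\simeq V^-_{\tau(\al)}$ of that lemma, and the lemma yields $\tau w_\l=w_\l\tau$.

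\emph{Step 3: comparing $\theta$ and $\tau$.} Once $\tau$ and $w_\l$ commute, and using $w_\l^2=\id$, we get $\theta^2=w_\l\tau w_\l\tau=w_\l^2\tau^2=\tau^2$. Hence $\theta$ is involutive if and only if $\tau$ is. Combining Steps 1--3 gives the forward implication.

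For the converse, commutativity is assumed, so $\theta^2=\tau^2=\id$. Together with the assumed isometry, Step 1 then gives (\ref{gen-cond}).

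The main point needing care is Step 2. It relies on the interpretation, via Lemma \ref{high-low} i), that (\ref{1nd-cond}) is equivalent to the module isomorphism required by Lemma \ref{tau-commutes w}. It also relies on $\tau$ being genuinely $\Z$-linear on all of $\La$, which holds because $\tau$ was extended as $-w_\l$ on $\La_\l$. I would cite the remark after Lemma \ref{tau-commutes w} for the first point rather than reprove it.
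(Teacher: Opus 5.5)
Your proof is correct and is essentially the paper's argument. You use the same bilinear expansion of (\ref{gen-cond}) with $\al=\bt$ to get the isometry and then self-adjointness of $\theta$, and the same Lemmas \ref{high-low}~i) and \ref{tau-commutes w} to get commutation with $w_\l$. The difference is that you run the commutation step, together with $\theta^2=\tau^2$, in the forward direction, where it is actually needed. The paper runs it inside the converse and leaves involutivity of $\tau$ implicit.

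One point you inherit from the paper is the claim that $\Pi$ spans $\h^*$. For general linear $\g$ it spans only the root hyperplane, and for $\g\l(n|n)$ the form is degenerate there, with radical $\C\sum_i(-1)^{\bar i}\zt_i$. So isometry plus self-adjointness only give $\theta^2-\id$ with values in that radical. Your identity $\theta^2=\tau^2$, a permutation of $\Pi$, is what lets you finish with a short check.
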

\begin{proof}
First of all note that a linear operator being orthogonal and involutive is the same as being symmetric and involutive,
or orthogonal and symmetric simultaneously.

Condition (\ref{gen-cond}) is bilinear and therefore holds true for any pair of vectors from $\h^*$.
Setting $\al=\bt$ in  (\ref{gen-cond}) we find that $\theta$ is an isometry.
Then
 (\ref{gen-cond})
 translates to
$$
\bigl(\theta(\al), \bt\bigr)=\bigl(\al,\theta(\bt)\bigr), \quad \forall \al, \bt\in \Pi.
$$
It means that  $\theta$ is a symmetric operator. Therefore it is an involutive isometry.

Conversely, suppose that  $\tau$ is  involutive, coincides with $-w_\l$ on $\Pi_\l$, and the composition $\theta=-w_\l\circ \tau$ is involutive and orthogonal.
Then
$$
(\al,\mu)=\bigl(\theta (\al), \mu\bigr)=\bigl(\al,\theta(\mu)\bigr)=-\bigl(\al,w_\l\circ\tau(\mu)\bigr)=-(\al,\tilde \mu)
$$
 for all $\al\in \Pi_\l$ and $\mu\in \bar \Pi_\l$.
Thus, the condition (\ref{1nd-cond}) is fulfilled, and $\tau$ commutes with $w_\l$, by Lemma \ref{high-low} i) and  Lemma \ref{tau-commutes w}.
Since  $\theta=-w_\l\circ \tau$ is orthogonal and involutive,  (\ref{gen-cond}) holds true either.
\end{proof}
\noindent
Note that for regular $\l$ our requirement for roots $\al$ and $\tilde \al=-\theta(\al)$,  $\al\in \bar \Pi_\l$,
to be of  the same parity is redundant once $\theta$ satisfies
(\ref{gen-cond}). It is then automatically fulfilled  because  $\theta$ is an isometry.

\begin{remark}
\label{even-isometry}
\em
If the mapping $w_\l$ is even (preserves the parity of weights and roots of $\l$), then its
extension to an operator on $\h^*$  is an isometry. Then condition (\ref{gen-cond})
is fulfilled if and only if $\tau$ is an involutive isometry preserving $\Pi_\g$ and
therefore an automorphism of Dynkin diagram.
By definition, $w_\l$ is even if and only if the polarization of $\l$ is symmetric.

  Contrary to the case of symmetric grading, $\tau$ is not an automorphism of the Dynkin diagram of $\g$. Rather, it is
an isomorphism between two generally different diagrams. Indeed,  $D_\g$ comprises the following data: the sub-diagram $D_\l$ and
the adjoint $\l$-module structure on $\g_\pm$. The permutation $-w_\l$ induces an automorphism of $\l$ (possibly changing $D_\l$).
We extend $-w_\l$ to $\tau$ by the requirement that the module structure on $\g_+$ is taken by $\tau$ to  that on $\g_-$.
Thus $\tau(D_\g)$ becomes isomorphic to $D_\g$.
\end{remark}
The necessity of considering a pair of Dynkin diagrams instead of the single one as for even $w_\l$ makes the study more complicated.
It can be  simplified  if we pass to the  set of  basic weights, $W=\{\pm\zt_i\}_{i=1}^N$, because it is preserved by both $w_\l$ and $\tau$.
It splits to a union $W=W_\l\coprod \bar W_\l$, where $W_\l$ comprises the  weights of $\l$ and $\bar W_\l$ is its complement (which is orthogonal to $W_\l$).
Then the operator $\theta$ restricts to an orthogonal involutive  permutation on $\bar W_\l$ and identical on $W_\l$.

\begin{example}
\label{gl(3|1)}
\em
  Take $\g\l(3|1)$ for $\g$ with the following Dynkin diagram:
\be
\begin{picture}(350,15)
\put(160,3){\circle{3}}
\put(161.5,3){\line(1,0){27}}

\put(188.5,1.5){\framebox(3,3)}

\put(191.5,3){\line(1,0){24}}
\put(217,1.5){\framebox(3,3)}

\put(155,8){$\al$}
\put(185,8){$\bt$}

\put(212,8){$\gm$}
 \end{picture}
\ee
where $\al$ is even while the squared nodes $\bt$ and $\gm$ are odd. Elements of  $\Pi_\l$ will be painted  black.
In our case,
we take $\Pi_\l$ consisting of a single element $\bt$. We have a pair of isomorphic diagrams
\be
\begin{picture}(80,15)
\put(10,3){\circle{3}}
\put(11.5,3){\line(1,0){27}}

\put(38.5,1.5){\textcolor{black}{\rule{3pt}{3pt}}}

\put(41.5,3){\line(1,0){24}}
\put(66,1.5){\framebox(3,3)}

\put(5,8){$\al$}
\put(35,8){$\bt$}

\put(62,8){$\gm$}
 \end{picture}
\quad
\begin{picture}(80,15)
\put(8,1.5){\framebox(3,3)}
\put(11.5,3){\line(1,0){27}}

\put(38.5,1.5){\textcolor{black}{\rule{3pt}{3pt}}}

\put(41.5,3){\line(1,0){24}}
\put(67,3){\circle{3}}

\put(5,8){$\gm'$}
\put(35,8){$\bt'$}

\put(62,8){$\al'$}
 \end{picture}
\ee
where $\mu'=\tau(\mu)$ for $\mu=\al,\bt,\gm$.
It terms of the original diagram, the operator $\tau$ fixes  $\bt$ and permutes $\al$ and $\gm$.
It is not even because $w_\l$ is not even on weights. The reason is that the lowest ($\al$) and  highest ($\al+\bt$) weights of the 2-dimensional
$\l$-module $V^+_\al$ have different parities. Therefore $V^+_\al\simeq V^-_\gm$ only if $\al$ and $\gm$ have different parities.

Let us look at this example in terms of the basic weights  $W_\g=\{\pm\ve_1,\pm\ve_2,\pm\dt,\pm\ve_3\}$.
The Weyl operator flips $\ve_2\leftrightarrow \dt$. The permutation $\tau$ acts by
$$
\tau\colon\{\ve_1,\ve_2,\dt,\ve_3\}\mapsto \{-\ve_3,-\dt,-\ve_2,-\ve_1\}
$$
while the operator $\theta$ reads:
$$
\theta\colon \{\ve_1,\ve_2,\dt,\ve_3\}\mapsto \{\ve_3,\ve_2,\dt,\ve_1\}.
$$
It is clearly an involutive isometry.
\end{example}

Suppose that the pair $(\Pi_\l,\tau)$  solves the system (\ref{1nd-cond}--\ref{2nd-cond}).
Let $\c\subset \h$ denote the centralizer of $\l$ in $\h$.
For each $\al\in \bar \Pi_\l$ pick  $c_\al\in \C^\times $, $\grave c_{\al}\in \C$,  and $u_\al\in \c$ assuming $u_\al\not =0$ only if $\al$ is even,  orthogonal to $\Pi_\l$, and $\tilde  \al=\al=\tau(\al)$. For each $\la\in \h^*$ let $h_\la\in \h$  denote the dual element to $\la$ with respect to the inner product on $\h^*$.
Put
\begin{equation}
\begin{array}{rcl}
y_\al &=& h_\al - h_{\tilde \al}, \\
x_\al &=& e_\al + c_\al f_{\tilde \al} + \grave c_{\al} u_\al,
\end{array}
\label{gen-spher-superpairs}
\end{equation}
for all $\al\in \bar \Pi_\l$.
Define a Lie subalgebra $\k\subset \g$ as the one generated by $\l$ and by $x_\al$, $y_\al$ with $\al\in  \bar \Pi_\l$.
\begin{definition}
  The pair of  Lie  superalgebras $\k\subset \g$ determined by a super-symmetric triple $(\g,\l,\tau)$ and by $c_\al, \grave{c}_\al$, $\al \in \bar \Pi_\l$, is called super-symmetric.
\end{definition}
\noindent
The  complex numbers $c_\al,\grave c_{\al}$ in (\ref{gen-spher-superpairs}) are called mixture parameters. We will put all $\grave c_{\al}$ to zero,
for the sake of simplicity. By a vector of the mixture parameters we will understand a set $\vec c= (c_\al)_{\al\in \bar \Pi_\l}$ with non-zero components.
Thus the subalgebra $\k$ is determined by the triple $(\Pi_\l,\tau, \vec c)$.

Let us explain the conditions $\tilde  \al=\al=\tau(\al)$ on the appearance of $u_\al$ in the $x_\al$.
Since $\h$ consists of even elements, both $\al$ and $\tilde \al$ must be even. Furthermore, $\al$ and $\tilde \al$ must be orthogonal to $\l$
as $u_\al$ is in its centralizer. Finally, the requirement
$$
[y_\al,x_\al]=\bigl((\al,\al)-(\al,\tilde\al)\bigr)e_\al+\bigl((\tilde \al,\tilde \al)-(\tilde \al, \al)\bigr)f_\al\propto x_\al
$$
forces
$$
(\al,\al)=(\al,\tilde\al)=(\tilde \al,\tilde \al),
$$
which is possible only if $\al=\tilde \al$.
\begin{propn}
\label{ps-sym=sup-sph}
  A Lie superalgebra $\k$ determined by a super-symmetric triple and a vector of mixture parameters is spherical.
\end{propn}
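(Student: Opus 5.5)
Fix the Borel subalgebra $\b=\b_+=\h\op\g_+$; we must show $\g=\k+\b$. Since $\b\supset\h\op\g_+$, it is enough to show that $\k+\b$ contains every negative root space $\g_{-\bt}$, $\bt\in\Rm^+$. The plan is an induction on the height of $\bt$ with respect to $\Pi$. The induction carries the stronger statement that for each $\bt\in\Rm^+$ the space $\g_{-\bt}$ lies in $\k+\b$ modulo $\g_{-\bt'}$ with $\mathrm{ht}(\bt')<\mathrm{ht}(\bt)$; equivalently, $\k$ contains an element of the form $f_\bt+(\mbox{terms in }\b)+(\mbox{lower negative terms})$.

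\emph{Base: simple roots.} For $\al\in\Pi_\l$ we have $f_\al\in\l\subset\k$. For $\al\in\bar\Pi_\l$ the generator $x_{\tau(\al)}=e_{\tau(\al)}+c_{\tau(\al)}f_{\widetilde{\tau(\al)}}$ lies in $\k$, with $c_{\tau(\al)}\ne0$ (we take $\grave c=0$; otherwise the $\grave c u$ term lies in $\h\subset\b$ anyway). Here
\[
\widetilde{\tau(\al)}=w_\l\circ\tau\bigl(\tau(\al)\bigr)=w_\l(\al),
\]
using that $\tau$ is involutive by Proposition \ref{tau-invol-orth}. By Lemma \ref{high-low} ii), $w_\l(\al)$ is the highest weight of $V^+_\al$. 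Hence $f_{w_\l(\al)}\in\k+\b$, and $f_{w_\l(\al)}$ generates the irreducible $\l$-module $V^-_\al$ from its lowest weight vector, by Lemma \ref{high-low} i). Acting on it by $\l\subset\k$, we get $[\l,\k]\subset\k$, and the $\b$-part $e_{\tau(\al)}$ stays in $\b$ under $\mathrm{ad}(\l\cap\b)$. The positive part of $\l$ does not preserve $\b$-terms only up to elements of $\l\subset\k$, which is harmless. The upshot is that all of $V^-_\al$, in particular $f_\al$, lies in $\k+\b$.

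To make this precise, I would use the following reformulation. Consider $\k+\b\supset\l+\b=\p$, the parabolic subalgebra. It is a $\p$-submodule of $\g$ under the adjoint action: $\k+\b$ is stable under $\mathrm{ad}\,\l$ because both $\k$ and $\b$ are $\l$-stable modulo $\p\subset\k+\b$. Then $(\k+\b)/\p\subset\g/\p\simeq\m_-$ is an $\l$-submodule containing the image of $c f_{w_\l(\al)}$, hence containing $V^-_\al$ for every $\al\in\bar\Pi_\l$.

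\emph{Induction step.} The subalgebra of $\g$ generated by $\p$ and $\{f_\al\}_{\al\in\bar\Pi_\l}$ is $\g$. Since $\k+\b$ need not be a subalgebra, I would argue instead with brackets of elements of $\k$. For $\bt=\bt_1+\al$ with $\al$ simple, take $X_1\in\k$ with leading negative term $f_{\bt_1}$ (by the induction hypothesis) and $X_\al\in\k$ with leading term $f_\al$. The bracket $[X_1,X_\al]\in\k$ then has leading negative term proportional to $f_\bt$, plus terms in $\b$, plus lower negative terms. This is the standard Letzter-type argument that $\mathrm{gr}\,\k\supseteq\g_-$ with respect to the filtration by height.

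The main obstacle is controlling the cross terms in the bracket. Products of the $\b$-parts $e_{\tau(\cdot)}$ with the negative leading parts could produce negative root vectors of height not lower than $\bt$. Height counting rules this out: a bracket of a positive with a negative root vector has height strictly between the two, so every such cross term is either in $\b$ or of strictly lower negative height. Odd generators need a separate check: $[X_1,X_\al]$ is a supercommutator, and one must verify that the leading coefficient $[f_{\bt_1},f_\al]\ne0$ for some choice of decomposition of $\bt$. This follows because $\g_-$ is generated by the simple $f_\al$, so every negative root vector is a nonzero iterated bracket along some chain of simple roots.
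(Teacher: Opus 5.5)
The paper does not actually prove this statement; it refers to \cite{AMS}, where the result was established for the minimal symmetric grading, and takes for granted that the argument carries over. Your proposal is a correct, self-contained proof that works for an arbitrary grading, so it takes a genuinely different route from the paper's bare citation.

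\textbf{Base step.} $\k+\b$ is $\mathrm{ad}\,\l$-stable, because $\l\subset\k$ and $[\l,\b]\subset\l+\b$. Since $\p=\l+\b\subset\k+\b$, the space $(\k+\b)\cap\mathfrak{m}_-$ is an $\l$-submodule of $\mathfrak{m}_-$. It contains every $f_{\tilde\mu}$, since $x_\mu-e_\mu-\grave c_\mu u_\mu=c_\mu f_{\tilde\mu}$ with $c_\mu\neq0$. By irreducibility (Lemma \ref{high-low} i)) and because $\tau$ permutes $\bar\Pi_\l$, it therefore contains every $V^-_\al$, and in particular every $f_\al$.

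\textbf{Height induction.} Every cross term in $[X_1,X_\al]$ either lies in $\b$ or is a negative root vector of height below $\mathrm{ht}(\bt)$. For a suitable splitting $\bt=\bt_1+\al$ one has $[f_{\bt_1},f_\al]\neq0$, because $\g_-$ is generated by the simple root vectors and the root spaces of $\g\l$ and $\o\s\p$ are one-dimensional.

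\textbf{What each route buys.} Your argument shows that sphericity uses only four facts:
\begin{itemize}
\item $\l\subset\k$;
\item $c_\al\neq0$;
\item Lemma \ref{high-low} i);
\item $\tau$ is a bijection of $\bar\Pi_\l$.
\end{itemize}
Neither (\ref{1nd-cond})--(\ref{2nd-cond}) nor evenness of $w_\l$ enters; those matter only for quantization. This is exactly the justification needed to transfer the result of \cite{AMS} to non-symmetric gradings. The paper's citation buys only brevity and leaves that transfer implicit.

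\textbf{Small slips to fix.}
\begin{itemize}
\item You call $\k+\b$ a $\p$-submodule. A priori only $\mathrm{ad}\,\l$-stability is available; $\mathrm{ad}\,\b$-stability is not evident before the conclusion. Fortunately only $\mathrm{ad}\,\l$-stability is used.
\item The sentence about the positive part of $\l$ is backwards. $\l\cap\g_+$ preserves $\b$; it is $\l\cap\g_-$ that maps $\b$ into $\l+\b$, which is harmless since $\l\subset\k$.
\item With a strong induction you may take $X_1=f_{\bt_1}+b_1$ with no lower negative terms at all. This removes the need for the filtered formulation.
\end{itemize}
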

\begin{proof}
See \cite{AMS}.
\end{proof}
\noindent
The rest of the paper is devoted to the question when the subalgebra $\k\subset \g$ is proper for a given super-symmetric  triple.
 \subsection{Decorated Dynkin diagrams and selection rules}
\label{SecDDD-SR}
Like in the non-graded case the permutation $\tau$ entering a super-symmetric triple $(\g,\l,\tau)$
 can be visualized via decorated Dynkin diagrams (DDD).
We use black colour for nodes in $\Pi_\l$ and white for $\bar \Pi_\l$ regardless of their parity.
The parity  will be  either described in words or via the following convention:
 circles designate even roots while squares stand for odd; a rhombus means  a root of  arbitrary parity.

As the subalgebra $\k$ is defined  through a set of generators, it is not {\em a priory} obvious when it is proper, i.e. strictly less than $\g$.
Otherwise $\k$ is not interesting, and such a pair $(\g,\k)$ is called trivial.
Below we formulate criteria that rule out DDD  giving rise to trivial pairs  for all values
of the mixture parameters. Such diagrams are considered as trivial and should be discarded.

Selection rules that filter out trivial DDD were formulated  for the minimal symmetric grading of $V$  in \cite{AMS} and they turn to out be sufficient  for a general grading.
We recall  them without proof.

It is convenient for the study of DDD to reduce them to smaller parts.
Let $C(\al)$ denote  the union of connected components of  $D_\l\subset D_\g$ that are  connected to  $\{\al,\tau(\al)\}\subset \bar \Pi_\l$.
 \begin{definition}
A decorated Dynkin  sub-diagram is a subgraph $D'$ in the total Dynkin graph $D$ such that $\tau(\al)\in D'$ and $C(\al)\subset D'$ for every white node $\al \in D'$.
\end{definition}
\noindent
For instance, the graph with nodes $C(\al)\cup \{\al,\tau(\al)\}$ is the minimal decorated sub-diagram that includes $\al\in \bar \Pi_\l$.
We denote it by $D(\al)$. More generally, $D(\al_1,\ldots, \al_k)$ will designate the decorated sub-diagram generated by $\al_1,\ldots, \al_k\in \bar \Pi_\l$.
It is the union of $\al_i,\tau(\al_i)$ and $C(\al_i)$, over $i=1,\ldots, k$.

Every decorated sub-diagram $D'\subset D$  defines subalgebras $\g'\subset \g$ and  $\l'\subset \l$, whose simple root generators
are the nodes of $D'$ and $D_\l'\cap D'$, respectively. Given
a spherical subalgebra $\k\subset \g$  determined by $(D=D_\g,D_\l,\tau)$ and a  mixture parameter  vector, we define
$\k'$ as generated by $\l'$ and by (\ref{gen-spher-superpairs}) with all white $\al \in D'$.
Clearly $(\g',\k', \tau'=\tau|_{D'})$ is a spherical triple.

The next statement is a rectification of the only  non-graded selection rule from  \cite{RV}.
\begin{lemma}
\label{non-grad-sel-rule}
Suppose that a decorated Dynkin diagram is such that
\be
\label{RVSR}
D(\bt) \simeq
\begin{picture}(35,10)
\put(2,1){$\scriptstyle\blacklozenge$}
\put(27,1){$\scriptstyle\lozenge$}
 \put(7,3){\line(1,0){21}}
 \put(1,7){$\small \al$} \put(30,7){$\small \bt$}
 \end{picture}
\ee
 for some $\bt\in \bar \Pi_l$.
 Then  $\g^{(\bt)}\subset \k$ unless $\bt$ is odd and $\al$ is even.
 \end{lemma}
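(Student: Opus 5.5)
The plan is to work entirely inside the rank-two subalgebra $\g^{(\bt)}$ generated by $e_\al,f_\al,e_\bt,f_\bt$. Since $D(\bt)$ consists only of $\al$ and $\bt$, we have $\tau(\bt)=\bt$, and $C(\bt)=\{\al\}$, so $\l'=\langle e_\al,f_\al\rangle$. By Lemma \ref{high-low}, $\tilde\bt=w_\l(\bt)$ is the highest weight of $V^+_\bt$. In the simply laced situation this is $\al+\bt$. If the link is double (a tail of shape $\Bg$ or $\Cg$), it is $\bt+k\al$ with $k\in\{1,2\}$, and it has to be read off from the root list of Section \ref{Sec_BClLieSAlg}. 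The mixed generator is therefore
$$
x_\bt=e_\bt+c_\bt f_{\tilde\bt},
$$
together with $y_\bt=h_\bt-h_{\tilde\bt}$ and $\l'$. It then suffices to show that $e_\bt\in\k$ whenever we are not in the case ``$\bt$ odd, $\al$ even''. Indeed, $e_\bt\in\k$ forces $f_{\tilde\bt}\in\k$, and then applying $\mathrm{ad}\,e_\al$ repeatedly produces $f_\bt$, hence $h_\bt$. Together with $\l'$, this generates all of $\g^{(\bt)}$.

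To isolate $e_\bt$, I will produce an element of $\h$ inside $\k$ that acts on $e_\bt$ and on $f_{\tilde\bt}$ with different eigenvalues. The construction proceeds in two steps.
\begin{itemize}
\item[1.] Apply $\mathrm{ad}\,e_\al$ to $x_\bt$ ($k$ times in the double-link case). Since $\bt+\al$ lies in the root string while $\tilde\bt-\al$ lowers $f_{\tilde\bt}$, this yields $z=e_{\bt+\al}+c' f_{\tilde\bt-\al}$ for some $c'\neq 0$. In the simply laced case $z=e_{\al+\bt}+c'f_\bt$.
\item[2.] Take the super-bracket $[x_\bt,z]$. All cross terms vanish because $2\bt+\al$ and $-(2\al+2\bt)$ are not roots. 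What remains is a Cartan element
$$
H=c'[e_\bt,f_\bt]\pm c_\bt[f_{\al+\bt},e_{\al+\bt}]\in\h\cap\k,
$$
that is, a combination $a\,h_\bt+b\,h_{\al+\bt}$ whose coefficients are fixed by the super-signs.
\end{itemize}
If $\bt(H)\neq-\tilde\bt(H)$, then $[H,x_\bt]$ and $x_\bt$ together span $e_\bt$ and $f_{\tilde\bt}$ separately, and we are done. As a fallback, one can also use $y_\bt$: it acts on $e_\bt$ by $(\bt,\bt)-(\bt,\tilde\bt)$ and on $f_{\tilde\bt}$ by $(\tilde\bt,\bt)-(\tilde\bt,\tilde\bt)$. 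Whenever these two numbers differ, $[y_\bt,x_\bt]$ and $x_\bt$ already separate the components.

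The core of the proof is a case check over the parities of $\al$ and $\bt$, carried out with the explicit matrix realization of root vectors from Section \ref{Sec_BClLieSAlg}.
\begin{itemize}
\item[(a)] Both even: this is the classical $\s\l(3)$ or rank-two computation of \cite{RV}.
\item[(b)] $\al$ odd: then $\l'\simeq\s\l(1|1)$ or $\g\l(1|1)$, with $[e_\al,e_\al]=0$ when $\al$ is isotropic, and $\al+\bt$ has parity opposite to $\bt$.
\item[(c)] $\bt$ odd and $\al$ even: here the sign in $H$ should make $H$ proportional to $y_\bt$, or even zero, so that no separation occurs. This is consistent with the $\g\l(2|1)$ example giving a proper subalgebra.
\end{itemize}

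I expect the main obstacle to be the bookkeeping of super-signs in the bracket $[x_\bt,z]$, which switches between commutator and anticommutator according to parity. This bookkeeping has to show that $\bt(H)+\tilde\bt(H)\neq0$, or alternatively that the $y_\bt$-eigenvalues differ, in every case except (c). The double-link cases add further work, since $\tilde\bt$ changes and the relevant roots become non-isotropic. In those cases I would first try the $y_\bt$ criterion, since the inner products $(\bt,\bt)$, $(\bt,\tilde\bt)$ and $(\tilde\bt,\tilde\bt)$ are immediate from $(\zt_i,\zt_i)=(-1)^{\bar i}$.
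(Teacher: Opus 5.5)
The paper gives no proof of this lemma; it is quoted from \cite{AMS}. Your argument therefore has to stand on its own, and its core does. Producing $H=[x_\bt,[e_\al,x_\bt]]\in\h\cap\k$ and checking $(\bt+\tilde\bt)(H)\neq0$ is the right mechanism. In the only case that really needs proof, $\al,\bt$ both even and joined by a single edge, it closes at once, and you should write this computation out instead of citing \cite{RV}:
\begin{itemize}
\item The roots $\pm\al,\pm\bt,\pm(\al+\bt)$ span a regular $\s\l(3)$. Normalize $e_{\al+\bt}=[e_\al,e_\bt]$, $f_{\al+\bt}=[f_\bt,f_\al]$, and write $\check h_\gm=[e_\gm,f_\gm]$.
\item Then $[e_\al,f_{\al+\bt}]=-f_\bt$, so $c'=-c$ and $H=-c\,(\check h_\al+2\check h_\bt)$.
\item Hence $\bt(H)=\tilde\bt(H)=-3c$. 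Since $\mathrm{ad}\,H$ acts on $e_\bt$ by $\bt(H)$ and on $f_{\tilde\bt}$ by $-\tilde\bt(H)$, the two components of $x_\bt$ get eigenvalues $-3c$ and $3c$, and $(\bt+\tilde\bt)(H)=-6c\neq0$.
\item In the exceptional case, e.g.\ $\g\l(2|1)$, the anticommutators give $H\in\C\check h_\al\subset\l$. This confirms your (c).
\end{itemize}

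Three other parts of your plan are wrong and should be removed.

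(i) The double-link extension. The graph (\ref{RVSR}) is a single edge, and for a double link the conclusion is false. Take a $\Bg$ tail with black short $\al=\zt_n$ and white long $\bt=\zt_{n-1}-\zt_n$. This is the first diagram of (\ref{Black-tail}), e.g.\ $\s\o(4)\subset\s\o(5)$. There $\k$ is proper, and your $H=[x_\bt,\mathrm{ad}(e_\al)^2x_\bt]$ comes out proportional to $\check h_\al$, so nothing separates. For a $\Cg$ tail with $\bt$ even, $\bt+\tilde\bt=2\zt_{n-1}$ is a root, so the cross terms in $[x_\bt,z]$ do not vanish.

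(ii) The $y_\bt$ fallback can never work. Here $y_\bt=h_\bt-h_{\al+\bt}=-h_\al\in\l$. Since $[h_\la,f_\mu]=-(\la,\mu)f_\mu$, its eigenvalue on $f_{\tilde\bt}$ is $(\tilde\bt,\tilde\bt)-(\bt,\tilde\bt)$, which equals the one on $e_\bt$ by (\ref{1nd-cond}). With your sign, the test would ``separate'' even in the exceptional $\g\l(2|1)$ case, where $\k$ is proper.

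(iii) Case (b) does not occur. An odd $\al$ on a single edge is isotropic, so $\tilde\bt=\al+\bt$ has parity opposite to $\bt$. That contradicts the standing hypothesis stated just before Definition \ref{triple}. Moreover (\ref{1nd-cond}) fails, because $(2\bt+\al,\al)=2(\al,\bt)\neq0$.

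Two minor points. The vanishing cross terms have weights $\pm(\al+2\bt)$, not $-(2\al+2\bt)$. And $\g^{(\bt)}$ in the paper is the subalgebra generated by $e_\bt,f_\bt$, not the rank-two one; this is harmless, since you obtain $e_\bt,f_\bt\in\k$.
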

\noindent
This lemma indicates that the  $\Z_2$-graded situation is more versatile.
\begin{lemma}
\label{isolated odd}
  Suppose that a decorated Dynkin diagram contains a sub-graph isomorphic to
\be
\label{ISO-ODD}
\begin{picture}(35,10)
\put(2,1){$\scriptstyle\lozenge$}
\put(30.5,1.5){\framebox(3,3)}
\multiput(7,3)(6,0){4}{\line(1,0){3}}
 \put(1,7){$\small \al$} \put(30,7){$\small \bt$}
 \end{picture}
\ee
 where a grey odd node $\{\bt\} =D(\bt)$, and
  $(\bt,\al)\not =0$.  Then $\g^{(\al)}+\g^{(\bt)}\subset \k$.
\end{lemma}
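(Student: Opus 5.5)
Here $\g^{(\al)}$ denotes the rank-one subalgebra generated by $e_{\pm\al}$, $h_\al$, and likewise for $\bt$. The hypothesis $\{\bt\}=D(\bt)$ means that $\bt$ is white, $\tau(\bt)=\bt$, and $\bt$ is not adjacent to any black node. I will first work out what $\k$ contains at $\bt$, then bracket with whatever $\k$ contains at $\al$.

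\textbf{Step 1: the generators at $\bt$.} Since $\bt$ is orthogonal to $\Pi_\l$, the weight $w_\l(\bt)$ differs from $\bt$ only by a combination of black roots in components adjacent to $\bt$, and there are none. Hence $\tilde\bt=w_\l\tau(\bt)=\bt$, so
$$
x_\bt=e_\bt+c_\bt f_\bt, \qquad y_\bt=0 ,
$$
because $\bt$ is odd and therefore $u_\bt=0$. As $\bt$ is isotropic, $[e_\bt,e_\bt]=0=[f_\bt,f_\bt]$, and so
$$
[x_\bt,x_\bt]=2c_\bt[e_\bt,f_\bt]=2c_\bt h_\bt\in\k .
$$
Thus $h_\bt\in\k$, with $h_\bt\neq0$ and $\bt(h_\bt)=(\bt,\bt)=0$.

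\textbf{Step 2: separating $e_\bt$ from $f_\bt$.} Because $(\bt,\al)\neq0$, it suffices to find an element $z\in\k$ with $\bt(z)=0$ and $\al(z)\neq0$ and use it as a weight operator. The most direct route is the $\al$-component. In each of the three cases below, $\k$ contains $h_\al$ or $y_\al$, or a polarization element from the $\al$-generator. Bracketing $h_\bt$ with that generator gives
$$
[h_\bt,e_{\pm\al}]=\pm(\bt,\al)e_{\pm\al},
$$
and for $x_\al=e_\al+c_\al f_{\tilde\al}$ it gives $(\bt,\al)e_\al-(\bt,\tilde\al)c_\al f_{\tilde\al}$. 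Combining this with $x_\al$ isolates $e_\al$ and $f_{\tilde\al}$ in $\k$, provided $(\bt,\al)\neq-(\bt,\tilde\al)$. This requires knowing $\tilde\al$ relative to $\bt$. Since $\tau$ fixes $\bt$ and $\theta=-w_\l\tau$ is an isometry, $(\bt,\tilde\al)=-(\theta\bt,\theta\al)\cdot(-1)$, so $(\bt,\tilde\al)=(\bt,\al)$ after tracking signs. Together with $(\bt,\bt)=0$ this separates the two components.

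\textbf{Step 3: the three cases for $\al$.} From $h_\bt$ alone, $\k$ contains either $e_\al,f_\al$ (when $\al$ is black) or $e_\al,f_{\tilde\al}$ (when $\al$ is white).
\begin{itemize}
\item[a)] If $\al$ is black, then $e_{\pm\al}\in\l\subset\k$ and $h_\al\in\k$ immediately.
\item[b)] If $\al$ is white with $\tilde\al\neq\al$, then $e_\al$ and $f_{\tilde\al}$ lie in $\k$ by Step 2. Using $y_\al=h_\al-h_{\tilde\al}$ and the mirror generator $x_{\tau(\al)}$, I obtain $f_\al$ and $e_{\tilde\al}$ as well; the argument is symmetric under $\tau$.
\item[c)] If $\al$ is white with $\tilde\al=\al$, then $[h_\bt,x_\al]=(\bt,\al)(e_\al-c_\al f_\al)$, and combining this with $x_\al$ gives $e_\al$ and $f_\al$.
\end{itemize}
In all three cases $e_{\pm\al}\in\k$, hence $h_\al\in\k$. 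Since $(\al,\bt)\neq0$, the weight operator $h_\al$ separates $[h_\al,x_\bt]=(\al,\bt)(e_\bt-c_\bt f_\bt)$ from $x_\bt$, so $e_\bt,f_\bt\in\k$. This gives $\g^{(\al)}+\g^{(\bt)}\subset\k$.

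\textbf{Main obstacle.} The delicate point is case b) in Step 3: confirming that $h_\bt$ acts on $e_\al$ and $f_{\tilde\al}$ with distinct eigenvalues. This amounts to $(\bt,\al)\neq-(\bt,\tilde\al)$, where $\tilde\al$ may lie far from $\bt$. If the isometry argument fails to give this, my fallback is to use $[x_\al,x_\bt]$ and its root components instead.
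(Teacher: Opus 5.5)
Your skeleton is sound, and it is probably the intended argument; the paper only recalls this selection rule and defers its proof to the authors' earlier work, so there is no in-text proof to compare against. Since $\bt\perp\Pi_\l$ and $\tau(\bt)=\bt$, you have $x_\bt=e_\bt+c_\bt f_\bt$. Isotropy then gives $[x_\bt,x_\bt]=2c_\bt h_\bt\in\k$. Finally, $\mathrm{ad}\,h_\bt$ preserves $\k$ and acts semisimply, so every eigencomponent of $x_\al$ lies in $\k$.

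The point you single out as the main obstacle is not an obstacle. The condition $(\mu+\tilde\mu,\nu-\tilde\nu)=0$ with $\mu=\bt$, $\nu=\al$ reads $2(\bt,\al-\tilde\al)=0$. Hence $e_\al$ and $f_{\tilde\al}$ carry the distinct eigenvalues $\pm(\bt,\al)$. A possible term $\grave c_\al u_\al$ has eigenvalue $0$ and is split off too. Your isometry computation gives the same result, since $\theta\bt=-\bt$ and $\theta\al=-\tilde\al$. Also, case a) is empty: $D(\bt)=\{\bt\}$ forbids black neighbours of $\bt$, so $\al$ is white.

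The genuine gap is in case b), in the passage from $e_\al,f_{\tilde\al}\in\k$ to $f_\al\in\k$, and your conclusion hinges on it. Both $h_\bt$ and $y_\al$ annihilate $e_\bt$, because $\bt(y_\al)=(\bt,\al-\tilde\al)=0$. So until you have $h_\al$, nothing in $\k$ separates $e_\bt$ from $f_\bt$.

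The tools you invoke do not produce $f_\al$. The element $y_\al$ merely rescales root vectors. The mirror generator $x_{\tau(\al)}$ is just $x_\al$ when $\tau(\al)=\al$, which is possible in case b) as soon as $\al$ has black neighbours. For $\tau(\al)\neq\al$, Step 2 applied to $x_{\tau(\al)}$ yields $e_{\tau(\al)}$ and $f_{w_\l(\al)}$, and the latter equals $f_\al$ only if $\al$ has no black neighbours.

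The missing ingredient is $\l\subset\k$ together with Lemma~\ref{high-low}. The module $V^-_\al$ is an irreducible $\l$-module with lowest vector $f_{w_\l(\al)}$. So $f_{w_\l(\al)}\in\k$ forces $V^-_\al\subset\k$, and in particular $f_\al\in\k$. For $\tau(\al)=\al$ this vector is $f_{\tilde\al}$, which you already have. For $\tau(\al)\neq\al$ you get it from $x_{\tau(\al)}$. Step 2 applies to that generator because $(\bt,\tau(\al))=(\bt,\tilde\al)=(\bt,\al)\neq0$, using $\tilde\al-\tau(\al)\in\Z\Pi_\l\perp\bt$. With this inserted, $h_\al=[e_\al,f_\al]\in\k$, and your final step splits $x_\bt$ as claimed.
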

\noindent
Let us emphasise that this statement holds true for any $D(\al)$. It means that a white odd root fixed by $\tau$ cannot be isolated from black roots, 
in a connected Dynkin diagram. 

The wording for the next lemma is improved compared to \cite{AMS}, where the root $\si$ was unnecessarily stated even.
That was a presentational flaw, as this restriction was not used in \cite{AMS} neither in the proof nor in applications of the lemma.
 \begin{lemma}
  \label{le-b-d}
Suppose that decorated Dynkin diagram contains a sub-graph isomorphic to
\be
\label{4NODES}
\begin{picture}(350,15)
\put(160,3){\circle*{3}}
\put(161.5,3){\line(1,0){27}}

\put(188.5,1.5){\framebox(3,3)}

\put(191.5,3){\line(1,0){24}}
\put(217,3){\circle*{3}}
\put(240,1){$\scriptstyle\lozenge$}

\multiput(217,3)(6,0){4}{\line(1,0){3}}

\put(155,8){$\al$}
\put(185,8){$\bt$}

\put(212,8){$\gm$}
\put(242,8){$\sigma$}
 \end{picture}
\ee
where $D(\bt)=\{\al,\bt,\gm\}$ and $(\gm,\si)\not=0$. Suppose that $\tau(\si)=\si$ and  $\al\not \in D(\si)$. Then $\g^{(\bt)}+\g^{(\si)}\subset \k$.
\end{lemma}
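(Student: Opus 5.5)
Our approach is to work inside the decorated sub-diagram $D(\bt,\si)$ and to produce, from the generators of $\k$, the pure root vectors $e_{\pm\bt}$ and then $e_{\pm\si}$. This is the same strategy as in the proofs of Lemmas \ref{non-grad-sel-rule} and \ref{isolated odd}, which we follow from \cite{AMS}. It suffices to work with $\k'\subset\g'$ for the sub-diagram, since $\k'\subset\k$.

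The first step is to identify $\tau(\bt)$ and $\tilde\bt$. The sub-diagram $D(\bt)=\{\al,\bt,\gm\}$ consists of one white node between two black ones, so $\tau(\bt)=\bt$. The module $V^+_\bt$ is the tensor product of the minimal modules of $\l_\al$ and $\l_\gm$, with lowest weight $\bt$ and highest weight $\tilde\bt=\al+\bt+\gm$. Hence
$$
x_\bt=e_\bt+c_\bt f_{\al+\bt+\gm}
$$
up to normalization, and the $\l$-module generated by $x_\bt$ is four dimensional. Applying $\mathrm{ad}\, f_\al$, $\mathrm{ad}\, f_\gm$, $\mathrm{ad}\, e_\al$, $\mathrm{ad}\, e_\gm$ gives the elements
$$
e_{\al+\bt}+c f_{\bt+\gm}, \quad e_{\bt+\gm}+c' f_{\al+\bt}, \quad e_{\al+\bt+\gm}+c'' f_\bt
$$
in $\k$.

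The second step is to bring in $\si$. Since $\tau(\si)=\si$, we have $x_\si=e_\si+c_\si f_{\tilde\si}$, where $\tilde\si$ is the highest weight of $V^+_\si$. The assumption $\al\notin D(\si)$ means $\al$ is not among the black nodes attached to $\si$, so $\tilde\si$ does not involve $\al$. Only $\gm$ and possibly further black nodes on the far side of $\si$ can enter $\tilde\si$. We then form supercommutators of $x_\si$, and of its $\l$-translates, with the four elements above, choosing them so that one of the two summands cancels by root considerations. The key fact is that $\al$ appears in the mixed partner of the $\bt$-elements but in nothing on the $\si$ side. For instance, $[x_\si,e_{\al+\bt}+cf_{\bt+\gm}]$ contains $[e_\si,e_{\al+\bt}]=0$, since $\al+\bt+\si$ is not a root when $\si$ is not adjacent to $\bt$. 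A suitable bracket should therefore isolate a pure term such as $f_{\bt+\gm}$ or $e_{\al+\bt+\gm+\si}$ modulo a combination already in $\k$. Once one pure root vector, for example $f_{\bt+\gm}$, lies in $\k$, we bracket it with $e_\gm\in\l$ to get $f_\bt$ up to sign. With $f_\bt\in\k$, subtracting it from $x_\bt$ (after moving by $\l$) gives $e_\bt\in\k$, so $\g^{(\bt)}\subset\k$. Bracketing $e_\bt$ and $f_\bt$ with $x_\si$ and its $\l$-translates then separates $e_\si$ from $f_{\tilde\si}$ in the same way, giving $\g^{(\si)}\subset\k$.

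The main obstacle is the second step: finding the bracket that actually kills one summand. The computation must be uniform in the parity of $\si$, since the lemma allows $\si$ of either parity (the improvement over \cite{AMS} noted before the statement), and uniform in the possible shape of $D(\si)$. We will handle this by root-lattice bookkeeping, namely checking which sums such as $\al+\bt+\si$ or $\tilde\si-\bt-\gm$ are roots, rather than by explicit matrices. Only two facts are needed: the oddness of $\bt$ and the isotropy $(\bt,\bt)=0$, which guarantee that $[e_\bt,f_\bt]$ is a nonzero Cartan element. No parity condition on $\si$ is needed. If a direct cancellation fails for some configuration of $D(\si)$, the fallback is to use $y_\si$ or $y_\bt$, together with the $\l$-action, to separate weight components, because $\k$ is stable under $\mathrm{ad}\,\c$-eigenspace decomposition.
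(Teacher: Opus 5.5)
The paper does not prove this lemma (it is quoted from the authors' earlier work). Judged on its own, your proposal has a genuine gap at exactly the step you call the main obstacle, and the mechanism you propose there cannot work.

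Every element obtained by bracketing $x_\bt$, $x_\si$ and $\l$ is a sum of terms whose weights come in pairs $\mu,\theta(\mu)$, and $\theta(\Rm)=\Rm$. So ``root-lattice bookkeeping'' always kills a summand together with its $\theta$-partner and never isolates a pure root vector. Concretely, work in $\g\l$ with $\al=\zt_1-\zt_2$, $\bt=\zt_2-\zt_3$, $\gm=\zt_3-\zt_4$, $\si=\zt_4-\zt_5$, matrix units $E_{ij}$, $x_\bt=E_{23}+cE_{41}$ and $x_\si=E_{45}+dE_{53}$. Your sample bracket $[x_\si,e_{\al+\bt}+cf_{\bt+\gm}]$ is identically zero. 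The brackets $[x_\si,e_{\bt+\gm}+c'f_{\al+\bt}]$ and $[x_\si,e_{\al+\bt+\gm}+c''f_\bt]$ are again two-term elements ($E_{25}$ paired with $E_{51}$, and $E_{15}$ paired with $E_{52}$).

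More decisively, a parity-blind argument would prove a false statement. For even $\bt$, all the other hypotheses are met by the non-graded AII diagram $\bullet-\circ-\bullet-\circ-\bullet$, which gives the proper subalgebra $\s\p(6)\subset\s\l(6)$. Your fallback does not help either. Here $y_\bt=-h_\al-h_\gm$ and $y_\si$ both lie in $\h_\l$. Moreover, $\k$ is not stable under $\mathrm{ad}\,\c$: otherwise $E_{11}+E_{22}\in\c$ would split $x_\bt$ in the proper subalgebra of $\g\l(2|2)$ from Remark \ref{outstanding gl}.

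The missing idea is that the summands are separated by the super sign rule, and this is where the oddness of $\bt$ (not its isotropy) enters. Put $u=[E_{34},x_\bt]=-E_{24}+cE_{31}$ and $v=[E_{12},x_\bt]=E_{13}-cE_{42}$, both odd, and let $p$ be the parity of $\si$. Then $[x_\si,u]=(-1)^pE_{25}+cd\,E_{51}$ and
\[
\bigl[[x_\si,u],v\bigr]=-c\,(E_{45}-d\,E_{53}).
\]
This holds because the two Koszul signs $(-1)^{\deg(x_\si)\deg(u)}=(-1)^p$ and $(-1)^{\deg([x_\si,u])\deg(v)}=(-1)^{p+1}$ multiply to $-1$ for either parity of $\si$. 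For even $\bt$ the same bracket equals $c\,x_\si$. The same result holds with $E_{63}$ in place of $E_{53}$ when a black node $\zt_5-\zt_6$ is attached to $\si$.

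Comparing this element with $x_\si$ gives $e_\si,f_{\tilde\si}\in\k$, hence $\g^{(\si)}\subset\k$ via $\l$. Then $[e_\si,u]\propto E_{25}$, $[E_{25},E_{54}]=E_{24}$ and $[E_{43},E_{24}]\propto e_\bt$. Once $e_\bt\in\k$, the generator $x_\bt$ yields $f_{\tilde\bt}$, and hence $f_\bt$ via $\l$. So the natural order is $\si$ first, then $\bt$. The proof rests on this sign computation, not on root vanishing; it has to be repeated in the ortho-symplectic realizations when $\si$ is a tail root.
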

\noindent
Let us emphasise that the graphs (\ref{RVSR}, \ref{ISO-ODD},\ref{4NODES}) are meant up to isomorphism (regardless of their orientation on the plane).

 The next lemma specially addresses Dynkin diagrams of  even orthogonal shape.
\begin{lemma}\cite{AMS}
\label{sel-rul-d}
 Suppose that a decorated Dynkin diagram of shape  $\Dg$  contains one of the sub-diagrams
\be
\label{D-TAIL}
\begin{picture}(55,10)
\put(0,3){\circle*{3}}
 \put(1.5,3){\line(1,0){12}}
\put(13.5,1.5){\framebox(3,3)}
 \put(16.5,3){\line(1,0){12}}

 \put(30.5,3){\circle*{3}}

\put(32.5,3.5){\line(1,1){10}}\put(32.5,3.2){\line(1,-1){10}}
\put(43,14){\circle{3}}\put(43,-8){\circle{3}}

\qbezier(46,-6)(53,4)(46,13)
\put(47,11.5){\vector(-2,3){2}}\put(47,-4.5){\vector(-2,-3){2}}
\end{picture}
\quad\quad\quad
\begin{picture}(38,10)
\put(0,3){\circle*{3}}
 \put(1.5,3){\line(1,0){12}}

 \put(14,1.5){\framebox(3,3)}

\put(17.5,3.5){\line(1,1){10}}\put(17.5,3.2){\line(1,-1){10}}
\put(28,14){\circle{3}}\put(28,-8){\circle*{3}}

\end{picture}
\ee
 where circles are even and squares are odd.
 Then $\g^{(\bt)}\subset \k$ for each white node $\bt$ here.
\end{lemma}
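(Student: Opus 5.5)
Note that \g^{(\bt)} denotes the subalgebra generated by the root vectors $e_{\pm\gm}$ for the nodes $\gm\in D(\bt)$ (the algebra $\g'$ of the decorated sub-diagram $D(\bt)$). The plan is to reduce the statement to the decorated sub-diagram itself. For each white node $\bt$ of (\ref{D-TAIL}), the nodes of $D(\bt)$ lie inside the displayed fragment. So it suffices to work in the small Lie superalgebra $\g'$ generated by that fragment, with the induced triple $(\g',\k',\tau')$, and to show $\k'=\g'$ for all values of the mixture parameters. Then $\g^{(\bt)}\subset\k'\subset\k$. In the first diagram the fragment is a $\Dg$-shaped algebra of rank four with an odd isotropic node in the shaft, i.e.\ of the type $\o\s\p(4|4)$ or $\o\s\p(6|2)$, with the two tail nodes swapped by $\tau$. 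In the second diagram it is a rank-three $\Dg$-shape algebra of type $\o\s\p(2|4)$ or $\o\s\p(4|2)$ with an odd node at the branch point. I would realize both explicitly as matrices via the root vectors $e_{\zt_k\pm\zt_m}$, $f_{\zt_k\pm\zt_m}$ of Section \ref{Sec_BClLieSAlg}.

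\textbf{Second diagram.} Let $\bt$ be the white tail node and $\al$, $\gm'$ the two black nodes attached to the odd node. Then $x_\bt=e_\bt+c_\bt f_{\tilde\bt}$, where $\tilde\bt=\bt$ because $\tau(\bt)=\bt$ (the only white node). First I take the bracket of $x_\bt$ with the appropriate black root vectors $f$ from $\l$. This produces the mixed generators for every weight of $V^+_\bt$. Next I bracket two of these mixed generators whose positive parts sum to a root. For odd generators this is an anticommutator; the $e$–$e$ and $f$–$f$ terms may vanish or give root vectors of height two, while the cross terms give Cartan elements. Comparing with the already available $y$'s and $\h\cap\l$, I expect to extract a Cartan element $h\in\h$ not in $\l+\C y_\bt$ with $\bt(h)\neq0$. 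Then $[h,x_\bt]$ and $x_\bt$ separate $e_\bt$ from $f_\bt$, so both lie in $\k$, and together with $\l$ they generate $\g'$.

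\textbf{First diagram.} I argue the same way for the white pair $\bt_1,\bt_2=\tau(\bt_1)$ at the tail. The odd shaft node is black-free, so it is fixed by $\tau$ and its $x$ mixes $e$ with $f$ of the same root. The key is the parity twist: $\zt_{n-1}$ and $\zt_n$ have the parity dictated by the odd node, so $2\zt_i$ may or may not be a root, as discussed in the irregular case. I compute $[x_{\bt_1},x_{\bt_2}]$ and its brackets with $\l$-lowering operators. The aim is to obtain a nonzero Cartan component that is not proportional to $y_{\bt_1}$, or a pure root vector, which forces separation exactly as before.

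The main obstacle is this bookkeeping of signs $\eps_k$ and $(-1)^{\bar k(\bar k+\bar m)}$ in the supercommutators. The danger is that the Cartan parts cancel for some special mixture parameters. I would first check that the resulting element of $\h$ is independent of the $c$'s up to a scalar $c_\bt$, so the conclusion holds for all $\vec c$, as in the analogous computation of \cite{AMS}.
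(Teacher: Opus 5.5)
The paper gives no proof of this lemma; it is quoted from \cite{AMS}. So I judge your plan on its own. It has a genuine gap, starting with a misreading of both diagrams.

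The first diagram has five nodes:
black even $\al_{n-4}$, white odd $\al_{n-3}$, black even branch node $\al_{n-2}$, and the white tail pair $\al_{n-1},\al_n$ swapped by $\tau$. The fragment is $\o\s\p(6|4)$. Its odd node is fixed by $\tau$ but flanked by black nodes, so $D(\al_{n-3})=\{\al_{n-4},\al_{n-3},\al_{n-2}\}$ and $x_{\al_{n-3}}=e_{\al_{n-3}}+c f_{\al_{n-4}+\al_{n-3}+\al_{n-2}}$. It is not a mixture of $e$ and $f$ of the same root.

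The second diagram has four nodes ($\o\s\p(4|4)$) and two white nodes. One is the odd branch node $\al_{n-2}$, with $D(\al_{n-2})=\{\al_{n-3},\al_{n-2},\al_{n-1}\}$. The other is the tail node $\al_n$, which is orthogonal to both black roots. Hence $V^+_{\al_n}$ is one-dimensional, $x_{\al_n}=e_{\al_n}+cf_{\al_n}$ is even, and bracketing it with $\l$ produces nothing.

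Likewise, in the first diagram $[x_{\al_{n-1}},x_{\al_n}]$ lands in $\l$. The tail part $\al_{n-2},\al_{n-1},\al_n$ alone is the admissible diagram (black branch node, swapped white tail) of Section \ref{Sec-White-tail}, so the obstruction must come from the odd node, which your plan does not exploit.

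For the swapped pair your separation test is also off. The element $[h,x_\bt]$ is independent of $x_\bt=e_\bt+cf_{\tilde\bt}$ iff $(\bt+\tilde\bt)(h)\neq0$, i.e. iff $h$ has a component in the $(-1)$-eigenspace of $\theta$. Every element of $\h\cap\l$ and every $y_\al$ is $\theta$-fixed, so ``a Cartan element not proportional to $y_{\bt_1}$'' is not enough. Your ``I expect to extract'' is exactly the step that needs an argument.

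The missing idea is as follows. In both diagrams the odd white node $\al$ lies between two black even nodes. So $D(\al)$ generates a Levi subalgebra $L\simeq\g\l(2|2)$: the $\g\l$ of four consecutive vectors $v_k,\ldots,v_{k+3}$, acting dually on their primed partners.
\begin{itemize}
\item In matrix units of $L$, $x_\al\propto E_{23}+c'E_{41}$ with $c'\neq0$ a multiple of $c_\al$. Hence $[E_{12},x_\al]=E_{13}-c'E_{42}$ and $[E_{34},x_\al]=-E_{24}+c'E_{31}$.
\item The anticommutator of these two odd elements is $c'(E_{11}+E_{22}+E_{33}+E_{44})$. So the unit $J$ of $L$ lies in $\k$ for every value of the mixture parameters.
\item $J$ is central in $L$, which is why it is harmless for (\ref{3NODES}) in $\g\l(2|2)$. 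In $\g$, however, $J=h_\la$ with $\theta(\la)=-\la$. It acts by $2$ on $e_{\zt_{n-1}+\zt_n}$ in the second diagram. In the first, $(\bt+\tilde\bt)(J)=2$ for $\bt=\al_{n-1}$, $\tilde\bt=\al_{n-2}+\al_n$, and likewise for $\bt=\al_n$.
\item Thus $[J,x_\bt]$ and $x_\bt$ separate $e_\bt$ and $f_{\tilde\bt}$ for the white tail nodes. A term $\grave c_\bt u_\bt$ commutes with $J$ and does not interfere.
\item Bracketing $x_\al$ with the tail root vectors now in $\k$ then separates $e_\al$ from $f_{\tilde\al}$ as well. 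This gives $\g^{(\bt)}\subset\k$ for every white $\bt$.
\end{itemize}
The coefficient in front of $J$ is $c'$ itself, so no exceptional parameter values arise, contrary to the danger you anticipated.
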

\noindent
Informally, Lemmas \ref{non-grad-sel-rule} to \ref{sel-rul-d} mean that
the white nodes in their graphs should be {\em de facto} re-painted as black
(this is however producing a devastating effect on all mixed generators, see Lemma \ref{ruin mixture} below).
Note that Lemma \ref{sel-rul-d} is not applicable  to diagram
$
\quad\begin{picture}(30,10)
\put(0,3){\circle{3}}
 \put(1.5,3){\line(1,0){12}}

 \put(14,1.5){\framebox(3,3)}

\put(17.5,3.5){\line(1,1){10}}\put(17.5,3.2){\line(1,-1){10}}
\put(28,14){\circle*{3}}\put(28,-8){\circle*{3}}

\end{picture}
$
despite it topologically coincides with one in (\ref{D-TAIL}). These two diagrams generate subalgebras with drastically different properties.
\begin{definition}
We call a triple  $(\g,\l,\tau)$ and the corresponding decorated Dynkin diagram trivial if the subalgebra $\k$
they generate coincides with $\g$ for all values of mixture parameters $c_\al\in \C^\times$, $\grave c_\al \in \C$,  $\al\in \bar \Pi_\l$.
\end{definition}

We will say that a decorated diagram $D$ violates selection rules if either
 $D$ contains a sub-diagram (\ref{RVSR}) distinct from
$
\simeq \begin{picture}(15,10)
\put(2,3){\circle*{3}}
\put(3,3){\line(1,0){7}}
\put(10.5,1.5){\framebox(3,3)}
\end{picture}
$
 or one of the sub-diagrams (\ref{ISO-ODD}), (\ref{4NODES}), (\ref{D-TAIL}).
The mechanism facilitating   selection rules boils down to the following observation.
\begin{lemma}
\label{ruin mixture}
  A spherical pair $(\g,\k)$ is trivial if and only if $\g^{(\al)}\subset \k$ for some $\al\in \bar \Pi_\l$.
\end{lemma}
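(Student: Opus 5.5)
The implication ``$k=\g\Rightarrow \g^{(\al)}\subset \k$'' is trivial, so the content of Lemma \ref{ruin mixture} is the converse. Here I read $\g^{(\al)}$ as the subalgebra generated by $e_\al,f_\al$, so that $\g^{(\al)}\subset\k$ means $e_\al,f_\al\in\k$; the argument should be adjusted if $\g^{(\al)}$ is meant otherwise. My plan is to show that the set
$$S=\{\bt\in\bar\Pi_\l : e_\bt,f_\bt\in\k\}$$
is closed under a propagation rule and therefore, by connectedness of $D_\g$, exhausts $\bar\Pi_\l$. Since $\l\subset \k$ already contains $e_\gm,f_\gm$ for all black $\gm\in\Pi_\l$, this will give $e_\gm,f_\gm\in\k$ for every $\gm\in\Pi$. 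Hence $\k=\g$, since $\g$ is generated by its Chevalley generators (plus the part of $\h$ obtained from $[e_\gm,f_\gm]$ and the central elements, handled at the end).

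First, two closure properties of $S$. If $\al\in S$, then $f_{\tilde\al}=c_\al^{-1}(x_\al-e_\al)\in\k$ (recall $\grave c_\al=0$). Applying the same to $x_{\tau(\al)}$ and $y_{\tau(\al)}$ should give $\tau(\al)\in S$, using $\tilde\al=w_\l\tau(\al)$ and the irreducibility of $V^\pm$ from Lemma \ref{high-low}: since $\k\supset \l$, the $\l$-module generated by $f_{\tilde\al}$ is $V^-_{\tau(\al)}$, which contains $f_{\tau(\al)}$. Symmetrically I would obtain $e_{\tau(\al)}$. Also $h_\al=[e_\al,f_\al]\in\k$ (up to a factor), or, for isotropic $\al$, the element $[e_\al,f_\al]$, which is still a nonzero element of $\h$ dual to $\al$.

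The propagation step is as follows. Let $\al\in S$ and let $\bt\in\bar\Pi_\l$ be connected to $\al$ in $D_\g$, either directly or through a chain of black nodes in $C(\bt)$. In the latter case I first replace $e_\al$ by the element of the $\l$-module $V^+_\al$ that is adjacent to $\bt$, using $\ad\,\l$. I then act on
$$x_\bt=e_\bt+c_\bt f_{\tilde\bt}$$
by $h_\al\in\k$. The two summands have $h_\al$-eigenvalues $(\al,\bt)$ and $-(\al,\tilde\bt)$. If these differ, the two summands separate, so $e_\bt\in\k$, and then $f_{\tilde\bt}\in\k$, whence $f_\bt\in\k$ as above. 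If the eigenvalues coincide, I would instead use $\ad\, f_\al$ and $\ad\, e_\al$. For instance $[f_\al,[e_\al,x_\bt]]$ is proportional to $e_\bt$ plus a contribution from $f_{\tilde\bt}$, and that contribution vanishes whenever $\al+\tilde\bt\notin\Rm$. That condition holds because $\tilde\bt$ is a positive root, whose support is connected and contains $\tau(\bt)$. This separates $e_\bt$ from $f_{\tilde\bt}$ again. The elements $y_\bt$ may be used for the same purpose if needed.

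I expect the degenerate situation to be the main obstacle. This is the case where $\al$ is isotropic odd, so that $(\al,\al)=0$ and $[e_\al,[e_\al,\cdot]]$-type tricks fail, and simultaneously $(\al,\bt)=-(\al,\tilde\bt)$. In that case I would go case by case on the local shapes, which are of only a few kinds because $D(\bt)$ has at most the shapes appearing in Lemmas \ref{non-grad-sel-rule}--\ref{sel-rul-d}. The tool there is the nondegeneracy of $\l$-module pairings given by Lemma \ref{high-low}, which ensures that some bracket with $e_\al$ or $f_\al$ is nonzero on exactly one of the two components.

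Finally, once $S=\bar\Pi_\l$, all Chevalley generators lie in $\k$, so $\k\supset[\g,\g]$. For $\g\l$, the remaining central direction of $\h$ is obtained from the elements $y_\al$ together with the fact that $\k$ is the whole algebra generated by these elements, or one compares with $\g$ modulo its centre as the paper implicitly does. This gives $\k=\g$.
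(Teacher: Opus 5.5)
\paragraph{Overall assessment.} Your overall route is the natural one, and it is presumably the argument of \cite{AMS} that the paper cites without reproducing: $\tau$-closure of $S$, propagation along links through black chains, and connectedness of $D_\g$. Your $\tau$-closure paragraph is correct. The fallback separation step, however, is misstated, and the isotropic case you call the main obstacle is left open.

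\paragraph{The separation step.} The $f_{\tilde\bt}$-term of $[f_\al,[e_\al,x_\bt]]$ vanishes when $[e_\al,f_{\tilde\bt}]=0$, i.e.\ when $\al-\tilde\bt\notin\Rm\cup\{0\}$. The condition $\al+\tilde\bt\notin\Rm$ is irrelevant and often false (e.g.\ when $\tilde\bt=\bt$ is linked to $\al$). Also, ``the support of $\tilde\bt$ is connected and contains $\tau(\bt)$'' proves neither condition.

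The right reason is white content. Every weight of $V^\pm_\mu$ is $\pm\mu$ plus a combination of black roots. So for any weight $\al'$ of $V^+_\al$, the weight $\al'-\tilde\bt$ has white part $\al-\tau(\bt)$, which has coefficients of both signs when $\bt\neq\tau(\al)$; the case $\bt=\tau(\al)$ is already settled by $\tau$-closure.

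Now take a shortest path $\al=\gm_0,\gm_1,\ldots,\gm_k,\bt$ with black interior, and put $\al'_j=\gm_0+\ldots+\gm_j$, $\al'=\al'_k$. By induction on $j$, using $[f_{\al'_{j-1}},e_{\gm_j}]=0$ (same sign argument) and $(\al'_{j-1},\gm_j)=(\gm_{j-1},\gm_j)\neq 0$, you get $e_{\al'}\in V^+_\al\subset\k$ and $f_{\al'}\in V^-_\al\subset\k$. Moreover $(\al',\bt)=(\gm_k,\bt)\neq0$. Then
\[
[f_{\al'},[e_{\al'},x_\bt]]=[f_{\al'},[e_{\al'},e_\bt]]=[[f_{\al'},e_{\al'}],e_\bt]\propto(\al',\bt)\,e_\bt\neq 0,
\]
since also $[f_{\al'},e_\bt]=0$. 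This uses only $(\al',\bt)\neq0$, never $(\al',\al')\neq 0$. So isotropic $\al$ needs no special treatment, and the $h_\al$-eigenvalue splitting and case-by-case analysis can be dropped. As written, though, your proposal does not close that case.

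\paragraph{Getting $f_\bt$.} ``Whence $f_\bt\in\k$ as above'' does not follow. The vector $f_{\tilde\bt}$ generates $V^-_{\tau(\bt)}$ and yields $f_{\tau(\bt)}$, not $f_\bt$, unless $\tau(\bt)=\bt$; your closure argument needed $f_\al$ beforehand. Use the dual bracket instead. The module $\mathrm{ad}(\l)\,x_{\tau(\bt)}\subset\k$ is the diagonal copy of $V^+_{\tau(\bt)}\simeq V^-_\bt$, so it contains its top vector $e_{\tilde\bt}+c'f_\bt$ with $c'\neq0$. By the same white-content argument,
\[
[e_{\al'},[f_{\al'},e_{\tilde\bt}+c'f_\bt]]\propto(\al',\bt)f_\bt\neq 0.
\]

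\paragraph{The centre of $\mathfrak{gl}(m|n)$.} The central direction cannot come from the $y_\al$. All generators of $\k$ (black root vectors, $x_\al$, and $y_\al=h_\al-h_{\tilde\al}$) are supertraceless, so $\k\subset\mathfrak{sl}(m|n)$ for every choice of data. The lemma must therefore be read as $\k\supseteq[\g,\g]$; for $m\neq n$ this is your ``modulo the centre''. Drop the first alternative.
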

\begin{proof}
  Only if is obvious. The converse is proved in a similar way to  Proposition 4.24 stated in  \cite{AMS}
  for the minimal symmetric grading.
\end{proof}
\begin{corollary}
\label{violating SR implies triviality}
  A decorated Dynkin diagram is trivial if it violates selection rules.
\end{corollary}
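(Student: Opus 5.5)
The plan is to combine the selection rules with Lemma \ref{ruin mixture}. Suppose the decorated Dynkin diagram $D$ violates the selection rules. Then, by definition, $D$ contains one of the sub-diagrams (\ref{RVSR}) (other than the exceptional one with a black even node next to a white odd node), (\ref{ISO-ODD}), (\ref{4NODES}), or (\ref{D-TAIL}). The goal is to produce some white node $\bt\in\bar\Pi_\l$ with $\g^{(\bt)}\subset\k$ for every choice of mixture parameters, and then let Lemma \ref{ruin mixture} conclude that $\k=\g$.

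\textbf{Step 1: localisation.} The offending graph appears only as a decorated sub-diagram $D'\subset D$. So I first check that the inclusion $\g'^{(\bt)}\subset\k'$ coming from the relevant lemma for the triple $(\g',\k',\tau')$ lifts to $\g^{(\bt)}\subset\k$. Here $\g'^{(\bt)}=\g^{(\bt)}$, since $\g^{(\bt)}$ is the root subalgebra attached to the node $\bt$ and depends only on that node. Also $\k'\subset\k$, because $\k'$ is generated by $\l'\subset\l$ together with the mixed generators $x_\al,y_\al$ for white $\al\in D'$, and these are among the generators of $\k$. Hence $\g^{(\bt)}\subset\k'\subset\k$.

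\textbf{Step 2: case analysis over the forbidden patterns.}
\begin{itemize}
\item If $D$ contains (\ref{RVSR}) in a non-exceptional form, Lemma \ref{non-grad-sel-rule} gives $\g^{(\bt)}\subset\k$ for the white node $\bt$.
\item If $D$ contains (\ref{ISO-ODD}), Lemma \ref{isolated odd} gives $\g^{(\bt)}\subset\k$ for the isolated white odd node $\bt$.
\item If $D$ contains (\ref{4NODES}), Lemma \ref{le-b-d} gives $\g^{(\bt)}\subset\k$.
\item If $D$ contains either pattern in (\ref{D-TAIL}), Lemma \ref{sel-rul-d} gives $\g^{(\bt)}\subset\k$ for each of its white nodes.
\end{itemize}
Each of these lemmas holds for arbitrary mixture parameters, so in every case some white $\bt$ has $\g^{(\bt)}\subset\k$ for all $\vec c$ and all $\grave c_\al$.

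\textbf{Step 3: conclusion.} Apply the "if" direction of Lemma \ref{ruin mixture} to get $\k=\g$ for every vector of mixture parameters. By the definition of triviality, the diagram is then trivial.

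\textbf{Main obstacle.} The only delicate point is Step 1. The selection lemmas are stated for sub-graphs "isomorphic to" the given pictures, and their hypotheses (for example $D(\bt)=\{\al,\bt,\gm\}$, $\tau(\si)=\si$, $\al\notin D(\si)$) are conditions inside $D$ itself, not merely inside a smaller diagram. I will therefore check that each lemma is applied with its hypotheses read in the full diagram $D$. I will also check that the inclusion it produces involves only generators of $\k$, which holds because the lemmas are established by bracketing the generators $x_\al,y_\al$ with elements of $\l$. With that checked, the corollary follows directly from the "if" direction of Lemma \ref{ruin mixture}.
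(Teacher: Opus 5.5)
Your proposal is correct and matches the paper's own (implicit) argument: each selection-rule lemma gives $\g^{(\bt)}\subset\k$ for some white node $\bt$ and for every vector of mixture parameters. The ``if'' direction of Lemma \ref{ruin mixture} then forces $\k=\g$. Your Step 1 is harmless but unnecessary, since those lemmas are already stated for the full decorated diagram that contains the forbidden sub-graph.
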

\noindent
Thus a triple $(\g,\l,\tau)$ (respectively, the decorated Dynkin diagram) is trivial if for each vector of mixture parameters there is
a white simple root $\al$ such that root vectors $e_\al$ and $f_\al$ belong to $\k$.
The converse to Corollary \ref{violating SR implies triviality} is also true. We postpone its proof to Section \ref{Sec_Nontriviality}.
\begin{definition}
  Decorated Dynkin diagrams that obey the selection rules are called (graded) Satake diagrams.
\end{definition}
\noindent
It is clear that if a DDD is Satake, then its every sub-diagram is Satake too.
We reserve the letter $D$ to denote Dynkin graphs while a given  Satake diagram supported on $D$ will be denoted
by $S$. This will also apply to sub-diagrams generated by a subset of white nodes.

Original non-graded Satake diagrams and their generalizations parameterize certain involutive automorphism of  root systems, see e.g. \cite{RV}.
 A similar interpretation
works for their super-symmetric analogs with even Weyl operator \cite{AMS}.  Next section extends this view
due to  "weird" Satake diagrams, or pairs of DDD, that come into play for general super-symmetric pairs.
 \section{Super Satake diagrams}
\label{Sec_GrSD}
In this section we describe decorated Dynkin diagrams that obey the selection rules. In a subsequent section we will prove that they are all non-trivial.
Recall that we adopt the following convention: simple roots from $\Pi_\l$ are depicted by black nodes,  while those  from $\bar \Pi_\l$ by white.
Even nodes are circles, odd nodes are squares. If a node can be of any parity, we denote it with rhombus.
The number $|\Pi_\l|$ of black nodes in the diagram is called its black rank. The cardinality $|\bar \Pi_\l|$ is called its white rank.

\subsection{General linear $\g$}
\subsubsection{Nonidentical $\tau$}
Suppose first that $\tau\not =\id$.
If $\Pi_\l=\varnothing$ and  $\rk\>\g =2m-1$, then $\al_m=\tau(\al_m)$ can be only even. The Satake diagram is
shown on the left in (\ref{GL-I}) with suppressed  specification of the parities. The nodes linked with arcs have the same parity.

Suppose that  the  sub-diagram $D_\l\subset D_\g$ contains a connected component of rank 2 or higher.
Then the selection rules tell us that $D_\l$ is connected.
We shall call the polarization of $\l$ even if the number of odd simple roots  in $\Pi_\l$ is even.
Otherwise the polarization is called odd.
\be
\begin{picture}(120,30)
\put(-2,1){$\scriptscriptstyle\lozenge$}\put(1.5,3){\line(1,0){12}}\put(13,1){$\scriptscriptstyle\lozenge$}
\put(16.5,3){\line(1,0){9}}\put(28,0){$\cdots$} \put(44.5,3){\line(1,0){9}}
\put(53,1){$\scriptscriptstyle\lozenge$}\put(56.5,3){\line(1,0){12}}\put(68,1){$\scriptscriptstyle\lozenge$}

\put(-2,27){$\scriptscriptstyle\lozenge$}\put(1.5,29){\line(1,0){12}}\put(13,27){$\scriptscriptstyle\lozenge$}
\put(16.5,29){\line(1,0){9}}\put(28,26){$\cdots$} \put(44.5,29){\line(1,0){9}}
\put(53,27){$\scriptscriptstyle\lozenge$}\put(56.5,29){\line(1,0){12}}\put(68,27){$\scriptscriptstyle\lozenge$}

\put(71.5,29){\line(1,-1){12}}
\put(71.5,3){\line(1,1){12}}\put(84.5,16){\circle{3}}

\put(-2,20){\vector(1,3){2}}\put(-1.8,11){\vector(1,-3){2}}\put(13,20){\vector(1,3){2}}\put(13.2,11){\vector(1,-3){2}}
\put(53,20){\vector(1,3){2}}\put(53.2,11){\vector(1,-3){2}}\put(68,20){\vector(1,3){2}}\put(68.2,11){\vector(1,-3){2}}
\qbezier(0,6)(-5,16)(0,26)\qbezier(15,6)(10,16)(15,26)\qbezier(55,6)(50,16)(55,26)\qbezier(70,6)(65,16)(70,26)
\put(90,15){$\scriptstyle \al_m$}
\end{picture}
\quad
\begin{picture}(80,32)
\put(-2,1){$\scriptscriptstyle\lozenge$}\put(1.5,3){\line(1,0){12}}\put(13,1){$\scriptscriptstyle\lozenge$}
\put(16.5,3){\line(1,0){9}}\put(28,0){$\cdots$} \put(44.5,3){\line(1,0){9}}
\put(53,1){$\scriptscriptstyle\lozenge$}\put(56.5,3){\line(1,0){12}}\put(68,1){$\scriptscriptstyle\blacklozenge$}
\put(70,28){\line(0,-1){5}}\put(70,4){\line(0,1){5}}
\put(68.5,11){\vdots}
\put(-2,27){$\scriptscriptstyle\lozenge$}\put(1.5,29){\line(1,0){12}}\put(13,27){$\scriptscriptstyle\lozenge$}
\put(16.5,29){\line(1,0){9}}\put(28,26){$\cdots$} \put(44.5,29){\line(1,0){9}}
\put(53,27){$\scriptscriptstyle\lozenge$}\put(56.5,29){\line(1,0){12}}\put(68,27){$\scriptscriptstyle\blacklozenge$}

\put(-2,20){\vector(1,3){2}}\put(-1.8,11){\vector(1,-3){2}}\put(13,20){\vector(1,3){2}}\put(13.2,11){\vector(1,-3){2}}
\put(53,20){\vector(1,3){2}}\put(53.2,11){\vector(1,-3){2}}
\qbezier(0,6)(-5,16)(0,26)\qbezier(15,6)(10,16)(15,26)\qbezier(55,6)(50,16)(55,26)
\put(74,14){$\scriptstyle D_\l$}
\put(53,34){$\scriptstyle \al_m$}

\end{picture}
\label{GL-I}
\ee
For an even polarization of $\l$, the adjacent to $D_\l$ white nodes have the same parity, otherwise their parities are different.
All other pairs of nodes connected with arcs have the same parities.
This case can be also extended for  $|\Pi_\l|=0,1$.

Let $\al_m$ be the white root preceding the black block in the diagram on the right (\ref{GL-I}), counting from the left. For empty $\Pi_\l$ and even rank, $m$ is set to $\frac{n}{2}$. The involution $\theta$ acts on the weights of $V$
by the assignment $\theta(\zt_i)= \zt_{i'}$, $i\leqslant m$, and $\theta(\zt_i)=\zt_i$, $m<i<m'$.
The weights $\zt_i$ and $\zt_{i'}$ with $i<m$ have the same parity. The parity of weights $\zt_i$ with  $m<i<m'$ is arbitrary.

Note with care that $\al_m=\tau(\al_m)$ in the diagram on the left is even.
A special case of odd $\al_m$ occurs if $m=1$, that is, $|\Pi|=|\bar \Pi_\l|=1$. It should be attributed to $\tau=\id$.
\subsubsection{Identical  $\tau$ and shaft spherical subalgebras}
\label{Sec_Shaft}
The remaining  family  of spherical subalgebras of  the $\Ag$-series also takes a part in the structure of subalgebras in series $\Bg,\Cg,\Dg$ either.

Suppose that all connected components of $D_\l$ consist of one node. Such a node can be odd only if $|\Pi_\l|=1$, because $w_\l$ flips its basic weights.
This case has been already considered in the preceding  subsection.  Thus, if the number of components of $D_\l$ is 2 or higher, all roots in  $\Pi_\l$ are even, and $\tau=\id$.
Within the setting of \cite{AMS}, there were two families of such  diagrams. In full generality, they can be included in a uniform  description as follows.
\begin{itemize}
  \item All odd nodes are white.
  \item The even part of the diagram splits to an ordered sequence of connected components (possibly empty in the case
of two neighbouring odd roots). If an odd root occupies the leftmost (respectively rightmost position), we assume that the even connected component on
the left (respectively on the right) is  empty.
  \item
  Each even component  either consists of white nodes  or is an alternating diagram
$
 \begin{picture}(45,10)
\put(2,3){\circle*{3}}
\put(3,3){\line(1,0){5}}
\put(11,0){$\cdots$}
\put(26,3){\line(1,0){5}}
\put(32,3){\circle{3}}
\put(34,3){\line(1,0){7}}
\put(42,3){\circle*{3}}
  \end{picture}
$
of odd rank.
\item The types of even components (an empty component is regarded as white) must alternate.
\end{itemize}
This description results from application of Lemmas \ref{non-grad-sel-rule},  \ref{isolated odd},  and  \ref{le-b-d}.

Diagrams of this kind have the following three types of the rightmost block:
\be
\label{rightmost_block}
 \begin{picture}(45,10)
\put(2,3){\circle*{3}}
\put(3,3){\line(1,0){5}}
\put(11,0){$\cdots$}
\put(26,3){\line(1,0){5}}
\put(32,3){\circle{3}}
\put(34,3){\line(1,0){7}}
\put(42,3){\circle*{3}}
  \end{picture}
\quad
 \begin{picture}(56,10)
\put(2,3){\circle*{3}}
\put(3,3){\line(1,0){5}}
\put(11,0){$\cdots$}
\put(26,3){\line(1,0){5}}
\put(32,3){\circle{3}}
\put(34,3){\line(1,0){7}}
\put(42,3){\circle*{3}}
\put(43.5,3){\line(1,0){7}}
\put(51.5,1.5){\framebox(3,3)}
  \end{picture}
\quad
 \begin{picture}(45,10)
\put(2,3){\circle{3}}
\put(3,3){\line(1,0){5}}
\put(11,0){$\cdots$}
\put(26,3){\line(1,0){5}}
\put(32,3){\circle{3}}
 \end{picture}
\ee
We will use this fact when studying invariants of the spherical subalgebras of concern.
\begin{definition}
  A shaft spherical subalgebra $\k$ in a general linear Lie superalgebra $\g$ is the one that corresponds to a triple $(\g,\l,\tau=\id)$ described above.
  Its DDD is called shaft diagram.
\end{definition}
\noindent
Later on, we will also use this term  for spherical subalgebras contained in the shaft part $\s$ of  ortho-symplectic  $\g$.
\begin{remark}
\label{outstanding gl}
\em
  Two special  Satake diagrams of shape $\Ag$ with $\tau=\id$,
\be
\label{3NODES}
\begin{picture}(7,10)
\put(1.5,1.5){\framebox(3,3)}
\end{picture}
\quad \quad
\begin{picture}(60,15)
\put(0,3){\circle*{3}}
\put(1.5,3){\line(1,0){27}}

\put(28.5,1.5){\framebox(3,3)}

\put(31.5,3){\line(1,0){24}}
\put(57,3){\circle*{3}}
 \end{picture}
\ee
that stand away of the   above classification, give rise to  proper spherical subalgebras in $\g\l(1|1)$ and $\g\l(2|2)$ of codimension 1 and 3, respectively.
They  cannot
be extended to  $\g\l$-diagrams of higher white  rank because of Lemmas \ref{isolated odd} and  \ref{le-b-d}.
Although the right DDD is topologically isomorphic to one of  $\o\s\p(4,2)$, they define different  spherical subalgebras.
\end{remark}
Shaft spherical subalgebras    play a role in construction of spherical subalgebras of type $\Bg,\Cg,\Dg$ in the subsequent sections.
Their quantum counterparts  are related to the so called twisted reflection equation of the $\Ag$-type, see e.g. \cite{AMS}.

\subsection{Orthosymplectic $\g$}
For ortho-symplectic $\g$ with the tail subalgebra $\t$, we separate three classes of Satake diagrams:
\begin{itemize}
  \item all nodes in $D_\t$ are  black,
  \item  all notes in $D_\t$ are white,
  \item one node in $D_\t$ of shape $\Dg$  is black while the other is white.
\end{itemize}
We describe them in what follows.

Recall that $\tau=\id $ when restricted to $D_\s\subset D_\g$, and  Satake diagrams of shapes $\Bg,\Cg,\Dg$ cannot have
isolated black odd nodes (such a  node cannot be a component of $D_\l$). More generally, the Weyl operator $w_\l$ is even for any $\Pi_\l$.

A key ingredient of our approach is a reduction of $\k$  to its certain subalgebras that are
spherical with respect to certain Satake sub-diagrams.
One of them will be a shaft sub-diagram contained in $D_\s$.
It is an $\Ag$-shape diagram corresponding to the identical $\tau$.
We argue that it cannot be (\ref{3NODES}) since otherwise the selection rules (\ref{4NODES}) or (\ref{D-TAIL}) would be violated in $D$.
Therefore it has to be shaft.

\subsubsection{Black tail diagrams}
\label{Sec_Black_tail}
Black sub-diagram $D_\l$ that includes $D_\t$ may have an arbitrary admissible partition to even and odd  nodes. Let $D^\flat$ denote the sub-graph comprising the rightmost white node in $D_\g$, call it $\al$, and all black nodes to the right
including  $D_\t$ of arbitrary shape.
Our reduction rule asserts that the  complementary part $D\backslash D^\flat$ inherits a structure of a shaft Satake diagram.
Here is the list of  examples of small white rank:
\be
\label{Black-tail}
\begin{picture}(18,10)
\put(2,3){\circle{3}}
\put(4,3){\line(1,0){7}}
\put(10.5,-0.5){$\blacklozenge$}
\put(1,7){$\scriptscriptstyle \al$}
\end{picture}
 \quad
 \begin{picture}(18,10)
\put(1,7){$\scriptscriptstyle \al$}
\put(0.5,1.5){\framebox(3,3)}
\put(4,3){\line(1,0){7}}
\put(10.5,-0.5){$\blacklozenge$}
\end{picture}
 \quad
\begin{picture}(28,10)
\put(11,7){$\scriptscriptstyle \al$}
\put(2,3){\circle*{3}}
\put(4,3){\line(1,0){7}}
\put(12,3){\circle{3}}
\put(14,3){\line(1,0){7}}
\put(20.5,-0.5){$\blacklozenge$}
\end{picture}
 \quad
\begin{picture}(28,10)
\put(11,7){$\scriptscriptstyle \al$}
\put(2,3){\circle{3}}
\put(4,3){\line(1,0){7}}
\put(12,3){\circle{3}}
\put(14,3){\line(1,0){7}}
\put(20.5,-0.5){$\blacklozenge$}
\end{picture}
 \quad
\begin{picture}(38,10)
\put(22,7){$\scriptscriptstyle \al$}
\put(2,3){\circle*{3}}
\put(3,3){\line(1,0){7}}
\put(10.5,1.5){\framebox(3,3)}
\put(14,3){\line(1,0){7}}
\put(23,3){\circle{3}}
\put(24,3){\line(1,0){7}}
\put(30.5,-0.5){$\blacklozenge$}
\end{picture}
 \quad
\begin{picture}(28,10)
\put(11,7){$\scriptscriptstyle \al$}
\put(2,3){\circle*{3}}
\put(3,3){\line(1,0){7}}
\put(10.5,1.5){\framebox(3,3)}
\put(14,3){\line(1,0){7}}
\put(20.5,-0.5){$\blacklozenge$}
\end{picture}
 \quad
\begin{picture}(28,10)
\put(11,7){$\scriptscriptstyle \al$}
\put(2,3){\circle{3}}
\put(3,3){\line(1,0){7}}
\put(10.5,1.5){\framebox(3,3)}
\put(14,3){\line(1,0){7}}
\put(20.5,-0.5){$\blacklozenge$}
\end{picture}
 \quad
\begin{picture}(38,10)
\put(21,7){$\scriptscriptstyle \al$}
\put(2,3){\circle*{3}}
\put(3,3){\line(1,0){7}}
\put(10.5,1.5){\framebox(3,3)}
\put(14,3){\line(1,0){7}}
\put(21.5,1.5){\framebox(3,3)}
\put(25,3){\line(1,0){7}}
\put(31.5,-0.5){$\blacklozenge$}
\end{picture}
\ee
The big black rhombi on the right designate $D^\flat\backslash \{\al\}$.
The diagrams with non-empty shaft parts give smallest possible extensions of $D^\flat$.
 \subsubsection{White tail diagrams}
\label{Sec-White-tail}
 Removing $D_\t$ consisting of white nodes leaves  a shaft Satake sub-diagram  supported on $D_{\s}$ that is subject to selection rules when extended to $D_\g$.
Below  we list  diagrams with the smallest  $D_{\s}\not =\varnothing$ (which can be extended further to the left in accordance with
classification  of shaft Satake diagrams in Section \ref{Sec_Shaft}):
\begin{itemize}
  \item$\Bg$-shape tail:
$
\begin{picture}(15,10)
\put(2,3){\circle{3}}
\put(3,2){\line(1,0){7}}
\put(3,4){\line(1,0){7}}
\put(6.5,1){$\scriptstyle >$}
\put(13.5,3){\circle{3}}
\end{picture}
\quad
\begin{picture}(15,10)
\put(2,3){\circle*{3}}
\put(3,2){\line(1,0){7}}
\put(3,4){\line(1,0){7}}
\put(6.5,1){$\scriptstyle >$}
\put(13.5,3){\circle{3}}
\end{picture}
\quad
\begin{picture}(25,10)
\put(2,3){\circle*{3}}
\put(3,3){\line(1,0){7}}
\put(10.5,1.5){\framebox(3,3)}
\put(13,2){\line(1,0){7}}
\put(13,4){\line(1,0){7}}
\put(16.5,1){$\scriptstyle >$}
\put(23.5,3){\circle{3}}
\end{picture}
\quad
\begin{picture}(15,10)
\put(2,3){\circle{3}}
\put(3,2){\line(1,0){7}}
\put(3,4){\line(1,0){7}}
\put(6.5,1){$\scriptstyle >$}
\put(12,1.5){\framebox(3,3)}
\end{picture}
\quad
\begin{picture}(15,10)
\put(2,3){\circle*{3}}
\put(3,2){\line(1,0){7}}
\put(3,4){\line(1,0){7}}
\put(6.5,1){$\scriptstyle >$}
\put(12,1.5){\framebox(3,3)}
\end{picture}
\quad
\begin{picture}(28,10)
\put(2,3){\circle*{3}}
\put(3,3){\line(1,0){7}}
\put(10.5,1.5){\framebox(3,3)}
\put(13,2){\line(1,0){7}}
\put(13,4){\line(1,0){7}}
\put(16.5,1){$\scriptstyle >$}
\put(22,1.5){\framebox(3,3)}
\end{picture}
$
 \item $\Cg$-shape  tail:
$
\begin{picture}(15,10)
\put(1.5,3){\circle{3}}
\put(5.5,2){\line(1,0){7}}
\put(5.5,4){\line(1,0){7}}
\put(2.5,1){$\scriptstyle <$}
\put(13.5,3){\circle{3}}
\end{picture}
\quad
\begin{picture}(15,10)
\put(1.5,3){\circle*{3}}
\put(5.5,2){\line(1,0){7}}
\put(5.5,4){\line(1,0){7}}
\put(2.5,1){$\scriptstyle <$}
\put(13.5,3){\circle{3}}
\end{picture}
\quad
\begin{picture}(25,10)
\put(2,3){\circle*{3}}
\put(3,3){\line(1,0){7}}
\put(10,1.5){\framebox(3,3)}
\put(15.5,2){\line(1,0){7}}
\put(15.5,4){\line(1,0){7}}
\put(12.5,1){$\scriptstyle <$}

\put(23.5,3){\circle{3}}
\end{picture}
$

\item $\Dg$-shape, $\tau=\id$:
$
\quad
\begin{picture}(23,10)
 \put(1,3){\circle{3}}

\put(2.5,3.5){\line(1,1){10}}\put(2.5,3.2){\line(1,-1){10}}
\put(13,14){\circle{3}}\put(13,-8){\circle{3}}

\end{picture}
\quad
\begin{picture}(38,10)
\put(0,3){\circle*{3}}
 \put(1.5,3){\line(1,0){12}}

 \put(14,1.5){\framebox(3,3)}

\put(17.5,3.5){\line(1,1){10}}\put(17.5,3.2){\line(1,-1){10}}
\put(28,14){\circle{3}}\put(28,-8){\circle{3}}

\end{picture}
\quad
\begin{picture}(15,10)
\put(0,3){\circle*{3}}
\put(0,3.5){\line(1,1){11}}\put(0,3.5){\line(1,-1){11}}
\put(11,13){\framebox(3,3)}\put(11,-8){\framebox(3,3)}
\put(11,13){\line(0,-1){18}}\put(14,13){\line(0,-1){18}}
\end{picture}
$
\item $\Dg$-shape tail, $\tau\not =\id$:
$
\quad
\begin{picture}(22,20)
\put(0,3){\circle{3}}
 \put(1.5,3.5){\line(1,1){10}}\put(1.5,3.2){\line(1,-1){10}}
\put(12.5,14){\circle{3}}\put(12.5,-8){\circle{3}}
\qbezier(15,-7)(22,4)(15,14)
\put(16.5,12){\vector(-2,3){2}}\put(16.5,-5){\vector(-2,-3){2}}

\end{picture}
\quad
\begin{picture}(22,20)
\put(0,3){\circle*{3}}
    \put(1.5,3.5){\line(1,1){10}}\put(1.5,3.2){\line(1,-1){10}}
\put(12.5,14){\circle{3}}\put(12.5,-8){\circle{3}}
\qbezier(15,-7)(22,4)(15,14)
\put(16.5,12){\vector(-2,3){2}}\put(16.5,-5){\vector(-2,-3){2}}
\end{picture}
\quad
\begin{picture}(38,20)
\put(0,3){\circle*{3}}
 \put(1.5,3){\line(1,0){12}}

 \put(14,1.5){\framebox(3,3)}

\put(17.5,3.5){\line(1,1){10}}\put(17.5,3.2){\line(1,-1){10}}
\put(28,14){\circle{3}}\put(28,-8){\circle{3}}
\qbezier(31,-7)(38,4)(31,14)
\put(32.5,12){\vector(-2,3){2}}\put(32.5,-5){\vector(-2,-3){2}}
\end{picture}
\quad
\begin{picture}(25,20)
\put(0,3){\circle*{3}}
\put(0,3.5){\line(1,1){11}}\put(0,3.5){\line(1,-1){11}}
\put(11,13){\framebox(3,3)}\put(11,-8){\framebox(3,3)}
\put(11,13){\line(0,-1){18}}\put(14,13){\line(0,-1){18}}
\qbezier(16,-7)(23,4)(16,14)
\put(17.5,12){\vector(-2,3){2}}\put(17.5,-5){\vector(-2,-3){2}}
\end{picture}
\quad
\begin{picture}(25,20)
\put(-1.5,3){\circle{3}}
\put(0,3.5){\line(1,1){11}}\put(0,3.5){\line(1,-1){11}}
\put(11,13){\framebox(3,3)}\put(11,-8){\framebox(3,3)}
\put(11,13){\line(0,-1){18}}\put(14,13){\line(0,-1){18}}
\qbezier(16,-7)(23,4)(16,14)
\put(17.5,12){\vector(-2,3){2}}\put(17.5,-5){\vector(-2,-3){2}}
\end{picture}
\quad
\begin{picture}(35,20)
\put(0,3){\circle*{3}}
 \put(2,3){\line(1,0){12}}
\put(14,1.5){\framebox(3,3)}
\put(17,3.5){\line(1,1){11}}\put(17,3.5){\line(1,-1){11}}
\put(28,13){\framebox(3,3)}\put(28,-8){\framebox(3,3)}
\put(28,13){\line(0,-1){18}}\put(31,13){\line(0,-1){18}}
\qbezier(33,-7)(40,4)(33,14)
\put(34.5,12){\vector(-2,3){2}}\put(34.5,-5){\vector(-2,-3){2}}
\end{picture}
$
\end{itemize}
Other shaft extensions starting with
$
\quad
\begin{picture}(20,20)
 \put(1,3){\circle*{3}}
\put(2.5,3.5){\line(1,1){10}}\put(2.5,3.2){\line(1,-1){10}}
\put(13,14){\circle{3}}\put(13,-8){\circle{3}}
\end{picture}
\quad
\begin{picture}(20,20)
\put(-2,3){\circle{3}}
\put(0,3.5){\line(1,1){11}}\put(0,3.5){\line(1,-1){11}}
\put(11,13){\framebox(3,3)}\put(11,-8){\framebox(3,3)}
\put(11,13){\line(0,-1){18}}\put(14,13){\line(0,-1){18}}
\end{picture}
\quad
\begin{picture}(35,20)
\put(0,3){\circle*{3}}
 \put(2,3){\line(1,0){12}}
\put(14,1.5){\framebox(3,3)}
\put(17,3.5){\line(1,1){11}}\put(17,3.5){\line(1,-1){11}}
\put(28,13){\framebox(3,3)}\put(28,-8){\framebox(3,3)}
\put(28,13){\line(0,-1){18}}\put(31,13){\line(0,-1){18}}
\end{picture}
$
 are forbidden by Lemmas \ref{non-grad-sel-rule} and \ref{isolated odd}.
\subsubsection{Mixed coloured tail of $\Dg$-shape}
\label{Sec_Mixed_tail}
Suppose that one tail root, say, $\al_{n-1}$, is black and the other one, $\al_n$, is white.
\begin{lemma}
  The tail roots are even.
\end{lemma}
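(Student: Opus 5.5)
The plan is to argue by contradiction: assume the tail roots $\al_{n-1}=\zt_{n-1}-\zt_n$ and $\al_n=\zt_{n-1}+\zt_n$ are odd, and show the diagram is either excluded or trivial. The two tail roots always have the same parity, namely $\bar{(n-1)}+\bar n$, so both are odd exactly when $\zt_{n-1}$ and $\zt_n$ have opposite parities; in that case the tail is the pair of linked odd squares. Let $\Pi_\l^{(0)}=\{\al_k,\ldots,\al_{n-1}\}$, $k\leqslant n-1$, be the black component containing $\al_{n-1}$. It is of shape $\Ag$, and its Weyl operator acts by $\zt_{k+j}\leftrightarrow\zt_{n-j}$.

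\emph{Step 1 (regularity).} By the Proposition characterising irregular connected sub-diagrams, this component is irregular unless its polarization is symmetric. The irregular case is deferred to the direct analysis of Section \ref{Sec_irreg}, so I would first treat the regular case. There, symmetry of the grading on $\zt_k,\ldots,\zt_n$ combined with $\bar{(n-1)}\neq\bar n$ forces $\bar k\neq\overline{k+1}$, so $\al_k$ is odd. In particular, when $k=n-1$ the component is the single black odd node $\al_{n-1}$, whose Weyl operator swaps $\zt_{n-1},\zt_n$ of different parity. This is non-even, contradicting the standing fact that $w_\l$ is even for shapes $\Bg,\Cg,\Dg$.

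\emph{Step 2 (the white tail root).} Since $\al_n$ is the only white node in the tail, and $\tau$ is the identity on the shaft and preserves shape, $\tau(\al_n)=\al_n$. Hence $\tilde\al_n=w_\l(\al_n)=\zt_k+\zt_{k+1}$, the highest weight of $V^+_{\al_n}$; it is odd and isotropic, consistent with (\ref{1nd-cond})--(\ref{2nd-cond}). So the parity/isometry conditions alone do not exclude this case, and the contradiction must come from triviality. I would compute the brackets of $x_{\al_n}=e_{\al_n}+c f_{\tilde\al_n}$ with $y_{\al_n}$, with the odd black root vectors $e_{\al_k},f_{\al_k}$, and with $e_{\al_{n-1}}$. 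Using the explicit matrix root vectors of Section \ref{Sec_BClLieSAlg}, the goal is to show that the $\l$-submodule generated by $x_{\al_n}$, together with its bracket with itself, contains $e_{\al_n}$ and $f_{\al_n}$ separately. For this I will use that the odd black root $\al_k$ is adjacent to the white node $\al_{k-1}$ (or $k=1$), mirroring the pattern of Lemma \ref{le-b-d} and the right diagram in (\ref{D-TAIL}). Then $\g^{(\al_n)}\subset\k$ for all mixture parameters, so the diagram is trivial by Lemma \ref{ruin mixture}.

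\emph{Main obstacle.} The hard step is the bracket computation in Step 2, in particular confirming that $[x_{\al_n},x_{\al_n}]$ (an even element, since $x_{\al_n}$ is odd) lies in $\h+\l$ and, after subtracting the $\l$-part, yields a nonzero multiple of $h_{\al_n}$ that separates $e_{\al_n}$ from $f_{\tilde\al_n}$. I would first do this for the minimal case $\g=\o\s\p$ with $D_\l$ of rank two or three, and then transport the result to general $D$ by restricting to the decorated sub-diagram $D(\al_n)$, using that $(\g',\k',\tau')$ is again a spherical triple.
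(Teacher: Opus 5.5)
Your Step 1 agrees with the paper. If the black component $\{\al_k,\ldots,\al_{n-1}\}$ has non-symmetric polarization, $\l$ is irregular and the case is deferred to Section \ref{Sec_irreg}; if it is symmetric, then $k<n-1$.

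The gap is in Step 2, which rests on a false claim. You assert that $\tau(\al_n)=\al_n$ with $\tilde\al_n=\zt_k+\zt_{k+1}$ is consistent with (\ref{1nd-cond})--(\ref{2nd-cond}), so that a contradiction can only come from triviality. It is not consistent. Pair with the black root $\al_{n-1}=\zt_{n-1}-\zt_n$ and use $(\zt_n,\zt_n)=-(\zt_{n-1},\zt_{n-1})$. You get $(\al_n+\tilde\al_n,\al_{n-1})=\pm 2$ for $k\leqslant n-3$ and $\pm 3$ for $k=n-2$, which violates (\ref{1nd-cond}).

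The missing idea, which is the whole of the paper's proof, is Remark \ref{even-isometry}. In the symmetric case $w_\l$ is even, hence an isometry, so $\tau$ must be a colour-preserving isometric automorphism of $D_\g$. The pair $\{\al_{n-1},\al_n\}$ is the only pair of distinct simple roots with inner product $\pm 2$ (the double link of the odd tail). So $\al_{n-1}$ is the only black node doubly linked to a white node, and $\tau$ would have to fix it. But $\tau(\al_{n-1})=-w_\l(\al_{n-1})=\al_k\neq\al_{n-1}$. This rules out every possible value of $\tau(\al_n)$ at once. Your choice $\tau(\al_n)=\al_n$ was not justified in any case; the paper itself considers $\tau(\al_n)$ lying in the shaft in Section \ref{Sec_irreg}.

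Hence no super-symmetric triple exists in this case, and no appeal to Lemma \ref{ruin mixture} is needed. Your planned triviality computation would not have worked either. Since $2\al_n$ and $2\tilde\al_n$ are not roots, $[x_{\al_n},x_{\al_n}]$ is a multiple of $[e_{\al_n},f_{\tilde\al_n}]$. Its weight $\zt_{n-1}+\zt_n-\zt_k-\zt_{k+1}$ is not a root for $k\leqslant n-3$, and is a root of $\l$ for $k=n-2$. So no multiple of $h_{\al_n}$ appears to separate $e_{\al_n}$ from $f_{\tilde\al_n}$. As written, your proposal leaves the regular case with $k<n-1$ unproved. It is closed immediately by the isometry (diagram automorphism) argument, or by the direct failure of (\ref{1nd-cond}).
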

\begin{proof}
Suppose the opposite, that the tail roots are odd.
  Let $D_\mg$ be the connected component of $D_\l$ containing $\al_{n-1}$. The subalgebra $\mg\subset \l$ is of shape $\Ag$ and rank $m$.
  If the  polarization of $\mg$  is not symmetric,
  then $\l$  is irregular. Suppose it is symmetric (and therefore $m>1$), then $\tau$ must be an automorphism of the Dynkin diagram $D_\g$,
  see Remark \ref{even-isometry}.
   But that is impossible, because $\al_{n-1}$ displaced by $\tau$ is the only black node with a double link to a white node (that is $\al_n$).
\end{proof}
\noindent
Let us remind the reader  that irregular $\l$ are processed in Section \ref{Sec_irreg}.
\begin{propn}
  If $\al_{n-2}$ is black, then $\rk \>\g=4$, $\rk \>\l=3$, and the Satake diagram
  is isomorphic to black tailed with all even nodes.
\end{propn}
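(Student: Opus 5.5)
By the preceding lemma the tail roots $\al_{n-1},\al_n$ are even, so the tail is of orthogonal type $\o(4)$. Let $D_\mg$ be the connected component of $D_\l$ containing $\al_{n-1}$. By hypothesis it also contains $\al_{n-2}$, and in the shape $\Dg$ diagram both $\al_{n-1}$ and $\al_n$ are attached to $\al_{n-2}$. The plan has three steps: pin down $\tau(\al_n)$, show the diagram cannot extend to the left of $D_\mg$, and then identify what remains.

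First, $\tau(\al_n)$. The permutation $\tau$ maps $\bar\Pi_\l$ to itself and $-w_\l$ maps $D_\mg$ to itself. The node $\al_n$ is white and linked to the black component $D_\mg$ through $\al_{n-2}$. Its weight lies in $\h^*_{\hat\mg}$ except for $\zt_n$: here $\zt_{n-1}-\zt_n\in\Pi_\mg$ and $\al_n=\zt_{n-1}+\zt_n$, with $\mg$ of shape $\Ag$ covering $\zt_k,\dots,\zt_n$. I compute $\tilde\al_n=w_\mg\circ\tau(\al_n)$ and impose condition (\ref{1nd-cond}), i.e. the requirement that $\theta=-w_\l\circ\tau$ be an involutive isometry fixing $\Pi_\l$ (Proposition \ref{tau-invol-orth}).

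If $\tau(\al_n)=\al_n$, then $\theta(\al_n)=-w_\mg(\al_n)=-(\zt_{k'}+\zt_{k})$, suitably indexed. Since $\theta$ must fix $\zt_{n-1}-\zt_n$, and $w_\mg$ reverses $\zt_k,\dots,\zt_n$, this forces $\zt_n$ to be sent to $\pm\zt_n$. That is compatible with isometry only if $\theta$ acts on $\{\zt_k,\dots,\zt_n\}$ by $\zt_i\mapsto-\zt_{k+n-i}$ while preserving the sum. Working this out, consistency requires that the rank $m$ of $\mg$ satisfy $m=2$ or $m=3$, i.e. the white node pairs off with the black $\Ag$ block in the way familiar from the non-graded $\mathrm{D}$-type Satake diagrams. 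In particular the black block meets the white $\al_n$ so that $\{\al_{n-1},\al_{n-2},\dots\}\cup\{\al_n\}$ realises the isomorphism $\Dg_3\simeq\Ag_3$. There $\al_n$ is an end node of an $\Ag_3$ whose other two nodes are black, so (\ref{RVSR}) via Lemma \ref{non-grad-sel-rule} or Lemma \ref{le-b-d} forbids it, unless the whole chain is precisely $\al_{n-3}$ white and $\al_{n-2},\al_{n-1},\al_n$ forming a black $\Dg_3$ after re-labelling.

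Second, no further extension. If there were a white node $\al_{k-1}$ to the left of $D_\mg$, or more black nodes, I would exhibit a forbidden sub-diagram: either (\ref{RVSR}) with $\al_n$ attached to a lone black $\al_{n-2}$, or (\ref{4NODES}) with $\si=\al_n$ fixed by $\tau$. Checking the parities, every node of $D_\mg$ lies in an even component next to the even tail, and odd nodes would produce a sub-diagram (\ref{ISO-ODD}) or (\ref{D-TAIL}). I expect this step to be the main obstacle: the case analysis must use the fact that $w_\l$ is even in shapes $\Bg,\Cg,\Dg$ (so $\tau$ is a genuine diagram automorphism, Remark \ref{even-isometry}). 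The automorphism property pins the white node $\al_n$ to a $\tau$-orbit whose only options are the $\Dg_4$ triality-type symmetry. Hence $\rk\,\g=4$ and the black nodes are exactly $\al_1$ (or $\al_{2}$), $\al_{3}$, and $\al_{4}$ minus the white one, giving $\rk\,\l=3$.

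Finally, I relabel the rank-4 diagram by the outer automorphism of $\Dg_4$ exchanging $\al_1$ with $\al_4$, which is legitimate because all nodes are even and the diagram is then an ordinary $\o(8)$ diagram. The white node then becomes $\al_1$ and the black nodes form the tail $\{\al_2,\al_3,\al_4\}$. This is the black-tailed diagram of Section \ref{Sec_Black_tail}, with all nodes even, which completes the plan.
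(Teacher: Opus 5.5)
There is a genuine gap: the steps that are meant to pin down the diagram are either miscomputed or rely on selection rules that do not apply here.

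In your first step, $\theta$ fixes $\zt_i-\zt_j$ for the weights $\zt_k,\ldots,\zt_n$ of the black component $\mg$, so $\theta(\zt_i)=\zt_i+c$ with a common $c$. Then $\tau(\al_n)=\al_n$ gives $2c=-(\zt_k+\zt_{k+1}+\zt_{n-1}+\zt_n)$. The isometry condition $2(\zt_i,c)+(c,c)=0$ for $k\leqslant i\leqslant n$ forces $n-k+1\in\{2,4\}$, and in the case $4$ all four weights must have the same parity. So $\rk\,\mg\in\{1,3\}$, not $\{2,3\}$. In the surviving case $\theta(\zt_i)=\zt_i-\frac12\sum_{j=n-3}^{n}\zt_j$, so $\zt_n$ is not sent to $\pm\zt_n$. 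This is exactly the paper's observation that $V^+_{\al_n}$, a $\Lambda^2$-type module over $\mg$, is self-dual only when $\al_{n-3}$ is black as well.

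You also never exclude $\tau(\al_n)=\al_{k-1}$, the only other white node touching $\mg$. Excluding it requires that $V^-_{\al_{k-1}}$, which is the basic $\mg$-module, is not isomorphic to $V^+_{\al_n}$. For $\rk\,\mg=2$ this holds because $\Lambda^2\C^3\simeq(\C^3)^*\not\simeq\C^3$; cf.\ Lemma \ref{lem-basic-dual}.

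Lemmas \ref{non-grad-sel-rule} and \ref{le-b-d} cannot substitute for this. $D(\al_n)$ contains the whole component $C(\al_n)\supset\{\al_{n-2},\al_{n-1}\}$, so it is never the two-node graph (\ref{RVSR}). Diagram (\ref{4NODES}) needs an odd white node between two black ones, which does not occur. The configuration you arrive at, with $\al_{n-3}$ white and $\al_n$ black, contradicts the hypothesis that $\al_n$ is white. It is also not the configuration in the conclusion, where $\al_{n-3}$ is black.

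Your second step has the same defect. Selection rules detect triviality of an \emph{existing} super-symmetric triple; what forbids extending to the left is that no triple exists at all. If $\al_{n-4}$ is present, it is white, and $V^+_{\al_{n-4}}$ restricts to the non-self-dual module $(\C^4)^*$ of the black $\s\l(4)$. The only other white node touching that component is $\al_n$, which $\tau$ already fixes, so (\ref{1nd-cond}) fails. This is how the paper obtains $n=4$.

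Likewise, odd black nodes do not produce (\ref{ISO-ODD}) or (\ref{D-TAIL}). They are excluded because an odd $\al_{n-3}$ or $\al_{n-2}$ makes the polarization of the black $\Ag$-component non-symmetric, hence $\l$ irregular (Section \ref{Sec_irreg}, Corollary \ref{irreg-s-dual}).

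Your remark that $\tau$ is a diagram automorphism can be turned into a short alternative proof. Since $\tau|_{D_\mg}=-w_\mg$ sends the trivalent node $\al_{n-2}$ to $\al_{k+1}$, you get $k=n-3$. Then $\al_{n-3}\mapsto\al_{n-1}$, a leaf, forces $n=4$. The case $n=3$ fails because the middle node would be sent to a leaf. However, this presupposes that $w_\l$ is even, which must first be derived from the regularity of $\l$, i.e.\ from Section \ref{Sec_irreg}; the proposal does not carry this out. The final relabelling by the leg swap of $\Dg_4$ is fine.
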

\begin{proof}
  Suppose that $\al_{n-2}$ is black. Then the $\l$-module $V_{\al_n}^+$ can be  self-dual only
  if $\al_{n-3}$ is black either.
  By the same reason, the sub-diagram  generated by $\{\al_{n-i}\}_{i=1}^3$ should be an even connected
   component in $D_\l$. The Satake diagram cannot be extended to the left because the $\l$-module $V_{\al_{n-4}}^+$ is not self-dual and has
   no isomorphic partner to mix with. Thus we conclude that $n=4$. The subalgebra $\l$ with odd black $\al_{n-3}$ and $\al_{n-2}$ is irregular, so the entire diagram has to be even.
\end{proof}
\noindent
Thus the only admissible  diagram  with even black $\al_{n-2}$ should be  be attributed to diagrams with  black tail.

Suppose now that $\al=\al_{n-2}$  is white. We proceed similarly as we did  in Section \ref{Sec_Black_tail} with black tails.
The complement to $D^\flat=\{\al_{n-i}\}_{i=0}^2$ is a shaft Satake diagram, $S^l$, whose rightmost block is one of the three types displayed in  (\ref{rightmost_block}).
The following diagrams with minimal non-empty $S^l$ survive  the selection rules and can be extended further to the left:
\be
\label{mixed_tails_smallest}
\begin{picture}(50,10)
\put(0,3){\circle*{3}}
 \put(1,3){\line(1,0){12}}
\put(15,3){\circle{3}}
    \put(16.5,3.5){\line(1,1){10}}\put(16.5,3.2){\line(1,-1){10}}
\put(27.5,14){\circle{3}}\put(27.5,-8){\circle*{3}}
\put(32,13){$\scriptstyle \al_{n}$}
\put(32,-10){$\scriptstyle \al_{n-1}$}
\put(12,6){$\scriptstyle \al$}
\end{picture}
\quad
\begin{picture}(50,10)
\put(0,3){\circle{3}}
 \put(1,3){\line(1,0){12}}
 \put(13,1.5){\framebox(3,3)}
    \put(16.5,3.5){\line(1,1){10}}\put(16.5,3.2){\line(1,-1){10}}
\put(27.5,14){\circle{3}}\put(27.5,-8){\circle*{3}}
\put(32,13){$\scriptstyle \al_{n}$}
\put(32,-10){$\scriptstyle \al_{n-1}$}
\put(12,6){$\scriptstyle \al$}
\end{picture}
\quad
\begin{picture}(50,10)
\put(0,3){\circle*{3}}
 \put(1,3){\line(1,0){12}}
 \put(13,1.5){\framebox(3,3)}
\put(16,3){\line(1,0){12}}
 \put(28,1.5){\framebox(3,3)}
    \put(31.5,3.5){\line(1,1){10}}\put(31.5,3.2){\line(1,-1){10}}
\put(42.5,14){\circle{3}}\put(42.5,-8){\circle*{3}}
\put(47,13){$\scriptstyle \al_{n}$}
\put(47,-10){$\scriptstyle \al_{n-1}$}
\put(27,6){$\scriptstyle \al$}
\end{picture}
\ee
The other possible  DDD with even $\al$ include
$
\quad
\begin{picture}(50,20)
\put(0,3){\circle{3}}
 \put(1,3){\line(1,0){12}}
\put(15,3){\circle{3}}
    \put(16.5,3.5){\line(1,1){10}}\put(16.5,3.2){\line(1,-1){10}}
\put(27.5,14){\circle{3}}\put(27.5,-8){\circle*{3}}
\put(32,13){$\scriptstyle \al_{n}$}
\put(32,-10){$\scriptstyle \al_{n-1}$}
\put(12,6){$\scriptstyle \al$}
\end{picture}
$
and
$
\quad
\begin{picture}(60,20)
\put(0,3){\circle*{3}}
 \put(1,3){\line(1,0){12}}
 \put(13,1.5){\framebox(3,3)}
\put(16,3){\line(1,0){12}}
 \put(30,3){\circle{3}}
    \put(31.5,3.5){\line(1,1){10}}\put(31.5,3.2){\line(1,-1){10}}
\put(42.5,14){\circle{3}}\put(42.5,-8){\circle*{3}}
\put(47,13){$\scriptstyle \al_{n}$}
\put(47,-10){$\scriptstyle \al_{n-1}$}
\put(27,6){$\scriptstyle \al$}
\end{picture}
$.
They violate the selection rule of Lemma \ref{non-grad-sel-rule}, as well as the diagram
$
\quad
\begin{picture}(15,25)
 \put(1,3){\circle{3}}

\put(2.5,3.5){\line(1,1){10}}\put(2.5,3.2){\line(1,-1){10}}
\put(13,14){\circle{3}}\put(13,-8){\circle*{3}}

\end{picture}
\quad
$
with empty $S^l$.
 The diagrams with  odd $\al$ in (\ref{mixed_tails_smallest}) extend a valid Satake diagram
 $
\begin{picture}(50,25)
 \put(13,1.5){\framebox(3,3)}
    \put(16.5,3.5){\line(1,1){10}}\put(16.5,3.2){\line(1,-1){10}}
\put(27.5,14){\circle{3}}\put(27.5,-8){\circle*{3}}
\put(32,13){$\scriptstyle \al_{n}$}
\put(32,-10){$\scriptstyle \al_{n-1}$}
\put(12,6){$\scriptstyle \al$}
\end{picture}
$ with  $S^l=\varnothing$.
Its extension that includes
$
\begin{picture}(50,25)
\put(0,3){\circle*{3}}
 \put(1,3){\line(1,0){12}}
 \put(13,1.5){\framebox(3,3)}
    \put(16.5,3.5){\line(1,1){10}}\put(16.5,3.2){\line(1,-1){10}}
\put(27.5,14){\circle{3}}\put(27.5,-8){\circle*{3}}
\put(32,13){$\scriptstyle \al_{n}$}
\put(32,-10){$\scriptstyle \al_{n-1}$}
\put(12,6){$\scriptstyle \al$}
\end{picture}
$
is forbidden by Lemma \ref{sel-rul-d}.

\vspace{5pt}
Thus there are three series of Satake diagrams with tail roots of different colour.
\subsubsection{No Satake diagrams with irregular $\l$  }
\label{Sec_irreg}
Let $\g$ be an even ortho-symplectic Lie superalgebra of rank $n\geqslant 3$ and shape $\Dg$. In this section we address the issue
of DDD with irregular $\l$.
We start with the study of  the module $W=V^+_{\al_n}$ over the maximal $\g\l$-subalgebra in $\g'\subset\g$
with the root basis $\al_1,\ldots, \al_{n-1}$. Let $\s\subset \g'$ denote the subalgebra with the root basis $\al_1,\ldots, \al_{n-2}$.
The module $W$ contains an $\s$-submodule with weights $\zt_1+\zt_n, \ldots, \zt_{n-1}+\zt_n$.

We choose the grading on $V$ such that $\deg(v_n)=0$.
Applying to $e_{\zt_1+\zt_n}$ the subalgebra $\s$, we conclude,
that $W$ has a weight $\zt_1+\zt_2$. If $\deg(v_1)=\deg(v_n)$ and therefore $\eps_1=\eps_n=1$, then $2\zt_1\not \in \Rm_\g$. Otherwise
$\deg(v_1)$ is opposite to $\deg(v_n)$, $\eps_1=-1$, and  $e_{1,1'}\in W$ is the highest vector:
$$
e_{\zt_1-\zt_2}\tr e_{\zt_1+\zt_2}=e_{\zt_1-\zt_2} e_{\zt_1+\zt_2}-(-1)^{\bar 1+\bar 2} e_{\zt_1+\zt_2}e_{\zt_1-\zt_2}\propto (1-\eps_1)e_{1,1'}.
$$
We have demonstrated that
\begin{lemma}
  The vector $e_{2\zt_1}$ is the highest in $W$, provided $\deg(v_1)=1$. Otherwise the highest vector in $W$ is $e_{\zt_1+\zt_2}$.
\end{lemma}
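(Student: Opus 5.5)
The plan is to describe $W=V^+_{\al_n}$ explicitly as a $\g'$-module and locate its highest weight among the weights of $W$. Recall that $\al_n=\zt_{n-1}+\zt_n$ and $\g'\simeq\g\l$ has root basis $\al_1,\ldots,\al_{n-1}$. The root vectors of $\g$ with weight $\zt_i+\zt_j$, $i<j$, together with $2\zt_i$ whenever it lies in $\Rm_\g$, span the component of $\g_+$ of $\g'$-degree one. This component is the (super)skew-symmetrized square of the natural $\g'$-module. So every weight of $W$ has the form $\zt_i+\zt_j$, with $i\leqslant j$, and the diagonal weight $2\zt_i$ occurs only when it is a root.

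Since $W$ is generated by $e_{\al_n}$, its highest weight is the largest weight reachable from $\al_n$ by repeated action of $e_{\al_k}$, $k<n$. The candidates are therefore $2\zt_1$ and $\zt_1+\zt_2$. First I would apply $\s$ to $e_{\zt_{n-1}+\zt_n}$ and then $e_{\zt_{n-1}-\zt_n}$. This produces $e_{\zt_i+\zt_n}$ for all $i<n$, and then $e_{\zt_1+\zt_2}$, using that the $\zt_i+\zt_n$ span a copy of the natural $\s$-module. Each step is a nonzero commutator of matrix units, which can be read off from the explicit formulas for $e_{\zt_k\pm\zt_m}$ in Section \ref{Sec_BClLieSAlg}.

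The remaining question is whether $e_{\zt_1-\zt_2}\tr e_{\zt_1+\zt_2}$ vanishes. Normalize $\deg(v_n)=0$, so $\eps_k=(-1)^{\bar k}$ is fixed by $\eps_i\eps_k=(-1)^{\bar i+\bar k}$ together with the choice $\eps_n=1$ (this is the normalization used in the paper). The root $2\zt_1$ belongs to $\Rm_\g$ exactly when $\eps_1=-1$, since $e_{1,1'}$ preserves $C$ only in that case. This is the same as $\deg(v_1)=1$.

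Next, compute the supercommutator $[e_{\zt_1-\zt_2},e_{\zt_1+\zt_2}]$ with the explicit matrices. Only the products of $e_{1,2}$ with $e_{2,1'}$ and of $e_{1,2'}$ with $e_{2',1'}$ survive, and together they give a multiple of $(1-\eps_1)e_{1,1'}$. The main obstacle is the sign bookkeeping: the factors $(-1)^{\bar k(\bar k+\bar m)}$ and $\eps_k$ have to combine exactly into $1-\eps_1$, uniformly in $\bar 2$.

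Once this is done, the two cases follow. If $\eps_1=-1$, the commutator is a nonzero multiple of $e_{2\zt_1}$. This vector is annihilated by all $e_{\al_k}$, $k<n$, because $2\zt_1+\al_k$ is never a weight of $W$, so it is the highest vector. If $\eps_1=1$, the commutator vanishes. The vector $e_{\zt_1+\zt_2}$ is killed by $e_{\al_k}$ for $k\geqslant 2$ because $\zt_1+\zt_2+\al_k$ is either not of the form $\zt_i+\zt_j$ or equals $\zt_1+\zt_3$ with lower index structure violated. Hence $e_{\zt_1+\zt_2}$ is the highest vector.
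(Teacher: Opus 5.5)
Your proposal is correct and follows the paper's own argument. Both first get $e_{\zt_1+\zt_2}$ into $W$ through the $\g'$-action on $e_{\al_n}$, then compute $e_{\zt_1-\zt_2}\tr e_{\zt_1+\zt_2}\propto(1-\eps_1)e_{1,1'}$ with $\deg(v_n)=0$, and split on $\eps_1=(-1)^{\bar 1}$. Two minor slips: reaching $\zt_1+\zt_2$ for $n>3$ needs the $\s$-action again after $e_{\al_{n-1}}$, and for $k\geqslant 2$ the weight $\zt_1+\zt_2+\al_k$ is never of the form $\zt_i+\zt_j$, so your alternative clause about $\zt_1+\zt_3$ is spurious.
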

\begin{corollary}
\label{irreg-s-dual}
  The module $W$ is self-dual if and only if $\g=\s\o(8)$.
\end{corollary}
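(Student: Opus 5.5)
The plan is to identify $W$ explicitly as a $\g'$-module and then compare it with its dual, where throughout "self-dual" means isomorphic to $W^*$ up to a one-dimensional twist. Such a twist is unavoidable: the identity matrix of $\g'\simeq\g\l(V_{\le n})$ acts on $W$ by a non-zero scalar and on $W^*$ by its negative. I expect pinning down this interpretation, and in particular the set of admissible twists, to be the main obstacle. The rest is bookkeeping with weights.

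\emph{Step 1: identify $W$.} By the discussion preceding the Lemma, $W$ is spanned by the root vectors $e_{\zt_i+\zt_j}$, $1\leqslant i<j\leqslant n$, together with $e_{2\zt_i}$ for those $i$ with $\deg(v_i)\neq\deg(v_n)$. Let $U$ be the natural $\g'$-module with weights $\zt_1,\ldots,\zt_n$. Then $W$ is the super-skew-symmetric square of $U$, its lowest weight is $\al_n=\zt_{n-1}+\zt_n$, and by the Lemma its highest weight is $\zt_1+\zt_2$ or $2\zt_1$.

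\emph{Step 2: the necessary condition.} The dual $W^*$ has highest weight $-\al_n=-\zt_{n-1}-\zt_n$. A one-dimensional $\g'$-module has weight $\chi=c\sum_{i=1}^n(-1)^{\bar i}\zt_i$, a multiple of the supertrace. So $W\simeq W^*\tp\C_\chi$ forces the highest weights to agree:
$$
\mathrm{hw}(W)=\chi-\zt_{n-1}-\zt_n .
$$
If $\deg(v_1)=1$, the Lemma gives $\mathrm{hw}(W)=2\zt_1$. Then $\chi=2\zt_1+\zt_{n-1}+\zt_n$, and this is never a multiple of the supertrace, since $n\geqslant 3$ and the coefficient of $\zt_2$ would have to vanish. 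So we must have $\deg(v_1)=0$ and $\mathrm{hw}(W)=\zt_1+\zt_2$. Then
$$
\chi=\zt_1+\zt_2+\zt_{n-1}+\zt_n .
$$
For this to be a multiple of $\sum_i(-1)^{\bar i}\zt_i$, every $\zt_i$ must occur, so $n\leqslant 4$. Since $n\geqslant 3$ and the four indices must be distinct, $n=4$. Moreover all coefficients equal $c=1$, so all $v_i$ are even (recall $\deg(v_n)=0$). Hence $V$ is purely even, $N=8$, and $\g=\s\o(8)$.

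\emph{Step 3: the converse.} For $\g=\s\o(8)$ we have $\g'=\g\l(4)$ and $W=\La^2\C^4$. The classical pairing $\La^2\C^4\tp\La^2\C^4\to\La^4\C^4$ gives $W\simeq W^*\tp\det$. Equivalently, the weight multiset $\{\zt_i+\zt_j\}$ is carried onto itself by $\la\mapsto\chi-\la$ with $\chi=\sum_{i=1}^4\zt_i$. This also agrees with Lemma \ref{high-low}, since $w_{\g'}$ exchanges $\zt_1+\zt_2$ and $\zt_3+\zt_4$.

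In writing this up I would state explicitly at the start that self-duality is taken modulo characters of $\g'$, matching how it is used in the proof of the preceding Proposition ("can be self-dual only if\ldots"). I would also remark that for a general linear superalgebra the characters are exactly multiples of the supertrace (Berezinian) weight. That fact is the single input that drives Step 2.
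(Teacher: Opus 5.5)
Your argument is correct and is essentially the paper's. The paper requires the sum of the extremal weights of $W$ to be orthogonal to the simple roots of $\g'$, which is exactly your condition that $\chi$ be a multiple of $\sum_i(-1)^{\bar i}\zt_i$; your coefficient comparison also supplies the reduction to $n=4$ and the exclusion of the highest weight $2\zt_1$, both of which the paper leaves unargued. One slip to fix: for $n=3$ the weight $2\zt_1+\zt_{n-1}+\zt_n=2\zt_1+\zt_2+\zt_3$ has a nonzero $\zt_2$-coefficient, so there you should argue instead that the coefficients $2$ and $1$ cannot both equal $\pm c$.
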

\begin{proof}
$W$ can be self-dual only if $\rk\>\g=4$.
  Paring the roots of $\g'$ with the extremal weights we obtain a system of equalities
$$
(\zt_1-\zt_2,\zt_1+\zt_2+\zt_3+\zt_4)=(\zt_3-\zt_4,\zt_1+\zt_2+\zt_3+\zt_4)=(\zt_2-\zt_3,\zt_1+\zt_2+\zt_3+\zt_4)=0.
$$
That, is,
$
(\zt_1,\zt_1)=\ldots =(\zt_4,\zt_4).
$
Therefore all basis vectors $v_i$ have the same degree. This completes the proof.
\end{proof}
\begin{lemma}
\label{lem-basic-dual}
  The module $W$ is not dual to the basic  $\g'$-module.
\end{lemma}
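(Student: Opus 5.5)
We compare weights: $W$ cannot be dual to the basic $\g'$-module because its set of weights has a different shape. Here $\g'\simeq\g\l(n)$ (graded) with root basis $\al_1,\ldots,\al_{n-1}$ and Cartan subalgebra $\h$, the same as for $\g$. The basic $\g'$-module is $\C^n=\Span\{v_1,\ldots,v_n\}$ with weights $\zt_1,\ldots,\zt_n$. Its dual has weights $-\zt_1,\ldots,-\zt_n$, all of multiplicity one, and its lowest weight is $-\zt_1$.

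The first step is to list the weights of $W=V^+_{\al_n}$. This module is spanned by root vectors $e_\gm$ with $\gm\in\Rm^+_\g$ of the form $\al_n+\sum_{i<n}k_i\al_i$, i.e. roots $\zt_i+\zt_j$ with $i<j\leqslant n$, together with $2\zt_i$ when $2\zt_i\in\Rm_\g$. Every weight of $W$ therefore pairs with the "trace" functional as a sum of two basic weights $\zt_i$. In contrast, every weight of $(\C^n)^*$ is the negative of a single basic weight.

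The second step is to compare these directly. The lowest weight of $W$ is $\al_n=\zt_{n-1}+\zt_n$, and the lowest weight of the dual basic module is $-\zt_1$. Since $\zt_1,\ldots,\zt_n$ are linearly independent in $\h^*$ (we are in rank $n\geqslant3$, so these are the basic weights), $\zt_{n-1}+\zt_n\neq-\zt_1$. An isomorphism of $\g'$-modules preserves weights, including lowest weights, so the two modules are not isomorphic, even up to parity shift. As a cross-check, the dimensions also differ: $W$ contains at least the $n-1$ weights $\zt_i+\zt_n$ together with $\zt_1+\zt_2$, so $\dim W\geqslant n>$ ... and in any case the weight sets differ.

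The only point needing care is whether "dual to the basic module" should be read up to twisting by a character of the centre of $\g'$, i.e. up to shifting all weights by a multiple of $\zt_1+\dots+\zt_n$. If the statement is meant that way, I would instead compare the differences of weights, which are insensitive to such a shift. The weights of $(\C^n)^*$ form a single $\s$-orbit chain of length $n$ with pairwise differences $\zt_i-\zt_j$. In $W$, the chain $\zt_1+\zt_n,\ldots,\zt_{n-1}+\zt_n$ already contains the additional weight $\zt_1+\zt_2$ (shown above), which is not of the form $\zt_k+\zt_n$ for $n\geqslant3$. So $W$ has more than $n$ weights, or at least a weight not on that chain, and cannot be a twist of $(\C^n)^*$. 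This is the main (mild) obstacle; the rest is a direct weight count.
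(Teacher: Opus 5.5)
Your first argument compares weights on the whole Cartan subalgebra $\h$. That is correct, but it makes the lemma empty, and it is not the sense in which the lemma is used. In the proposition excluding irregular $\l$, the lemma must rule out $V^+_{\al_n}\simeq V^-_{\al_{n-k-1}}$ as $\l$-modules. Here $\l=\langle e_\al,f_\al\rangle_{\al\in\Pi_\l}$ only sees weights restricted to $\h_\l$, i.e.\ modulo vectors orthogonal to $\al_1,\ldots,\al_{n-1}$. This is condition (\ref{1nd-cond}), and it is what the paper's orthogonality check tests. So the twisted reading in your last paragraph is the real content (the twist is by multiples of $\sum_i(-1)^{\bar i}\zt_i$, not $\sum_i\zt_i$), and there your argument fails.

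Take $n=3$ with $v_1,v_2,v_3$ all even. Then $W\simeq\Lambda^2\C^3$ has exactly the three weights $\zt_1+\zt_2$, $\zt_1+\zt_3$, $\zt_2+\zt_3$. These are $\chi-\zt_3$, $\chi-\zt_2$, $\chi-\zt_1$ with $\chi=\zt_1+\zt_2+\zt_3$ orthogonal to $\al_1,\al_2$. So $W$ does not have more than $n$ weights, and $\zt_1+\zt_2$ does lie on the twisted chain. In fact $W$ \emph{is} a twist of $(\C^3)^*$, since $\Lambda^2\C^3\simeq(\C^3)^*$ over $\s\l(3)$; this is also exactly where your dimension cross-check stalls at $\dim W=n$. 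With your identification of the dual basic module as the span of the $v_i^*$ (weights $-\zt_i$), the twisted statement you try to prove is false in this case.

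The paper splits the cases instead. For $n\geq 4$ it uses $\dim W\geq n(n-1)/2>n$. For $n=3$ with $\eps_1=-1$, the highest vector is $e_{2\zt_1}$ and $\dim W\geq 4$. Dimension is invariant under twisting, so these cases are safe, and your weight count would work there if phrased as a dimension count. In the remaining case $\eps_1=1$, the paper embeds $\g$ as the tail of a larger algebra with an extra node $\al_0=\zt_0-\zt_1$. It then imposes orthogonality against $\zt_1-\zt_2$ and $\zt_2-\zt_3$, which forces $\eps_1=-1$, a contradiction.

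The idea you are missing is which module $W$ must be compared with. The partner is $V^-_{\al_0}$, spanned by the $f_{\zt_0-\zt_j}$. Its weights $-\zt_0+\zt_j$ restrict on $\h_\l$ to those of $\C^3$, not $(\C^3)^*$. Since $\C^3\not\simeq(\C^3)^*$ over $\s\l(3)$, the needed non-isomorphism holds even though $W$ is a twist of $(\C^3)^*$.
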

\begin{proof}
  If the rank $n-1$ of $\g'$ is higher that 2, then $\dim W\geqslant \frac{n(n-1)}{2}> n$.
So we can assume that $n=3$ and $\eps_1=1$, so that $e_{\ve_1+\ve_2}$ would be the highest vector in $W$
(otherwise $\dim W$ would be $4>n$.

Let us realize the module $\C^n$ as a submodule in the subalgebra of rank $k$ where $\g$ is embedded as the tail part.
Then the weight $\zt_0-\zt_1+\zt_2+\zt_3$ should be orthogonal to $\zt_1-\zt_2$ and $\zt_2-\zt_3$. These requirements are
controversial as they force $(\zt_1,\zt_1)=-(\zt_2,\zt_2)=(\zt_3,\zt_3)$. This implies $\eps_1=-1$, which is a contradiction.
\end{proof}
\begin{propn}
  There is no admissible super-symmetric triple $(\g,\l,\tau)$ with irregular $\l$.
\end{propn}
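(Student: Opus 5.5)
By the Proposition characterizing irregular $\l$ (applied to each connected component of $D_\l$), irregularity can only occur for $\g$ of shape $\Dg$. Moreover, some connected component $D_\mg$ of $D_\l$ must have the form $\{\al_k,\ldots,\al_{n-1}\}$ with $k<n$ and a non-symmetric polarization. Up to the tail automorphism we may take $\al_{n-1}\in\Pi_\l$ and $\al_n\in\bar\Pi_\l$. The plan is to show that the white node $\al_n$ has no admissible partner $\tau(\al_n)$. Concretely, condition (\ref{1nd-cond}) requires $V^+_{\al_n}\simeq V^-_{\tau(\al_n)}$ as $\l$-modules, i.e. $V^+_{\tau(\al_n)}$ must be dual to $V^+_{\al_n}$. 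I will show that no white simple root produces a module dual to $W=V^+_{\al_n}$.

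First I reduce to the $\g\l$-subalgebra $\g'$ with basis $\al_1,\ldots,\al_{n-1}$. The component $\mg\ni\al_{n-1}$ sits inside $\g'$, and the $\mg$-module generated by $e_{\al_n}$ is a piece of $W$. Since $\tau(\al_n)\in\bar\Pi_\l$ and $\al_{n-1}\in\Pi_\l$, there are two possibilities. Either $\tau(\al_n)=\al_n$, in which case $W$ would have to be self-dual, or $\tau(\al_n)=\al_{j}$ with $j<k$, in which case $V^+_{\al_j}$ restricted to $\mg$ is a basic (minimal) module of the $\g\l$-type component $\mg$.

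In the self-dual case I invoke Corollary \ref{irreg-s-dual}: $W$ is self-dual only for $\g=\s\o(8)$, with all $v_i$ of the same degree. In that situation every $\zt_i$ has the same parity, so the polarization of $\mg$ is symmetric and $\l$ is regular, which contradicts the assumption. In the other case I invoke Lemma \ref{lem-basic-dual}: $W$ is not dual to the basic $\g'$-module. The remaining task is to transfer this from $\g'$ to $\mg$. The extremal weights of $W$ computed in the preceding Lemma, namely $2\zt_1$ or $\zt_1+\zt_2$ (with indices shifted to the component starting at $\al_k$), show that $W$ restricted to $\mg$ still has dimension larger than that of the minimal module, or has incompatible weights. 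Therefore $W$ cannot be isomorphic to the dual of the minimal $\mg$-module $V^+_{\al_{k-1}}$, which is the only candidate adjacent to $\mg$. If $D_\l$ has other components, those act trivially on $W$, so $V^+_{\tau(\al_n)}$ must also be trivial on them; this forces the neighbour $\al_{k-1}$, and no other white node can qualify.

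The main obstacle is this transfer step: applying Lemma \ref{lem-basic-dual} to the component $\mg$ rather than to the whole $\g'$, and handling small rank ($n=3$, or $\mg$ of rank one) where dimension counting is tight. There I would redo the weight orthogonality computation from the proof of Lemma \ref{lem-basic-dual}, requiring the sum of the lowest weight of $V^+_{\al_{k-1}}$ and the highest weight of $W$ to be orthogonal to $\Pi_\mg$. This should force $(\zt_i,\zt_i)$ to be constant along $\mg$, i.e. a symmetric polarization of $\mg$, again contradicting irregularity.
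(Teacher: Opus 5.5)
Your proposal is correct and is essentially the paper's proof. In shape $\Dg$ with $\al_{n-1}$ black and $\al_n$ white, the option $\tau(\al_n)=\al_n$ is ruled out by Corollary \ref{irreg-s-dual}, and the only other candidate, the white left neighbour of $D_\mg$, is ruled out by Lemma \ref{lem-basic-dual} plus the dimension count ($\dim V^+_{\al_n}$ is quadratic in the rank of $\mg$, while the basic module has linear dimension). The one real difference is the rank-one case: the paper shows that $\al_{n-2}$ has no partner, whereas you fold it into the weight check on $\al_n$. That check in fact gives an outright contradiction, since pairing the weight sum with the leftmost root $\al_k$ of $\mg$ yields $\pm(\zt_i,\zt_i)\neq 0$, rather than merely forcing a symmetric polarization.
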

\begin{proof}
We can assume that  the tail roots of $\g$ have different colour and we can choose $\al_{n-1}\in \Pi_{\l}$.
We can also assume that $D_\l$ is connected and set $k=\rk\> \l$.
The only white roots that are linked to $D_\l$ are $\al_n$ and $\al_{n-k-1}$, provided $k+1<n$.

If $\rk \>\l=1$, then $\l$ is irregular only if $\al_n$ and $\al_{n-1}$ are odd. Then the only root that is connected to $\al_{n-1}$ is 
$\al_{n-2}$, the modules $V^+_{\al_{n-2}}$ and $V^-_{\al_{n-2}}$ are not isomorphic and have no other isomorphic modules to mix with.

If $\rk \>\l>1$, the only case when $V^+_{\al_n}\simeq V^-_{\al_n}$  is with regular $\l$, thanks to Corollary \ref{irreg-s-dual}.
So we can assume that $V^+_{\al_n}\not \simeq V^-_{\al_n}$. The only candidate for $\tau(\al_n)\in \bar \Pi_\l$
is $\al_{n-k-1}$. By Lemma \ref{lem-basic-dual}, $V^-_{\al_{n-k-1}}$ is an $\l$-module  of dimension ${k+1}$,
while $\dim V^+_{\al_n}\geqslant \frac{(k+1)k}{2}$.
\end{proof}
\begin{remark}
  \em
Some diagrams for minimal symmetric polarization of ortho-symplectic $\g$ were overlooked in \cite{AMS}.
Those are
$$
 \begin{picture}(100,10)
\put(2,3){\circle*{3}}
\put(3,3){\line(1,0){5}}
\put(11,0){$\cdots$}
\put(26,3){\line(1,0){5}}
\put(28,1){$\scriptstyle($}
\put(32,3){\circle{3}}
\put(34,3){\line(1,0){7}}
\put(42,3){\circle*{3}}
\put(43,1){$\scriptstyle)$}
\put(43,3){\line(1,0){7}}
\put(50,1.5){\framebox(3,3)}
\put(53,3){\line(1,0){5}}
\put(60,0){$\cdots$}
\put(76,3){\line(1,0){5}}
\put(78,1){$\scriptstyle($}
\put(83,1){$\scriptstyle)$}
\put(82,3){\circle{3}}
\put(84,3){\line(1,0){5}}
\put(89,-0.5){$\blacklozenge$}

\end{picture}
\quad
\begin{picture}(120,10)

\put(18,1){$\scriptstyle($}
\put(23,1){$\scriptstyle)$}
\put(22,3){\circle{3}}
\put(24,3){\line(1,0){6}}
\put(30,1.5){\framebox(3,3)}

\put(34,3){\line(1,0){7}}
\put(42,3){\circle*{3}}
\put(43,3){\line(1,0){5}}
\put(51,0){$\cdots$}
\put(66,3){\line(1,0){5}}
\put(68,1){$\scriptstyle($}
\put(72,3){\circle{3}}
\put(74,3){\line(1,0){7}}
\put(82,3){\circle*{3}}
\put(83,1){$\scriptstyle)$}

 \put(83,3){\line(1,0){7}}
\put(92,3){\circle{3}}
    \put(93.5,3.5){\line(1,1){10}}\put(93.5,3.2){\line(1,-1){10}}
\put(104.5,14){\circle{3}}\put(104.5,-8){\circle*{3}}
\end{picture}
$$
and some diagrams of the  $\Dg$-shape with twisted white tail:
$$
\quad
 \begin{picture}(105,10)

\put(18,1){$\scriptstyle($}
\put(23,1){$\scriptstyle)$}
\put(22,3){\circle{3}}
\put(24,3){\line(1,0){6}}
\put(30,1.5){\framebox(3,3)}

\put(34,3){\line(1,0){7}}
\put(42,3){\circle*{3}}
\put(43,3){\line(1,0){5}}
\put(51,0){$\cdots$}
\put(66,3){\line(1,0){5}}
\put(68,1){$\scriptstyle($}
\put(72,3){\circle{3}}
\put(74,3){\line(1,0){7}}
\put(82,3){\circle*{3}}
\put(83,1){$\scriptstyle)$}

\multiput(84,3)(4,0){3}{\line(1,0){2}}
\put(93,-0.5){$\lozenge$}
\put(92,10){$\scriptstyle D_\t$}
\end{picture}
$$
The parenthses mean periodicity with possible zero occurrence.
Without the block of white nodes on the left of the odd root, this turns to a diagram of Type I, in the classification of \cite{AMS}.

\end{remark}

\section{Non-trivial  super-symmetric pairs}
\label{Sec_Nontriviality}
In this section we  prove that Satake diagrams defined in the previous section allow for non-trivial super-symmetric pairs $(\g,\k)$.
We are going to  demonstrate that for certain $\vec c$ the subalgebra
$\k$ is smaller than $\g$. We set all parameters $\grave{c}_\al$ to zero, for the sake of simplicity.
We will study (even) matrix invariants of $\k$ and show that they are in a greater supply than those of $\g$.
Depending on a type of diagram, we will consider the following three actions
$$
x \tp A\mapsto \rho(x)A-A\rho(x), \quad  x \tp A\mapsto \rho(x)A+A\rho^t(x), \quad x \tp A\mapsto \rho(x)A-A\rho^\theta(x),
$$
for  $x\in \k$ and $A\in \End(V)$. Here $\theta$ is the conjugation with the flip operator $v_n \leftrightarrow v_{n'}$ when $\g\in\Dg$ of rank $n$.
The action on the left is adjoint, while the two other will be referred as twisted.

Our method is reducing a  given Satake diagram to its smaller  sub-diagrams and finding invariants for the corresponding spherical subalgebras in $\k$.
A key role in this approach belongs to shaft spherical subalgebras because the shaft  constitutes  a bulk part of a general Dynkin graph of  shapes $\Bg,\Cg$, and $\Dg$.
We do that by induction on the white rank of the sub-diagrams.  Sub-diagrams of small white rank containing the tail are studied separately.
Finally we glue up the invariants of the shaft and tail parts of the diagram, to obtain an invariant of $\k$.
\subsection{Adjoint matrix $\k$-invariants for general linear  $\g$}
In this section we study  invariants of a spherical subalgebra $\k$ in general linear $\g$ corresponding to Satake diagrams (\ref{GL-I}) with $\tau\not =\id$.
We consider the adjoint action of $\k$ on $\End(V)$. Note that adjoint $\g$-invariants in $\End(V)$ are only  scalar matrices since $V$ is irreducible over $\g$.

Let $N-2m-1\geqslant 0$ be the rank of the subalgebra $\l$. The latter is a general linear Lie superalgebra acting on the graded
vector space $\C^{N-2m}=\Span\{ v_{m+1},\ldots, v_{N-m}\}$.
Consider a matrix
\be
A=(\mu+\la)\sum_{i=1}^{m}e_{ii}+\la \sum_{i=m+1}^{N-m}e_{ii}+\sum_{i=1}^{m}a_i e_{i,i'}+a_{i'}e_{i',i}, \quad a_i a_{i'}=-\la\mu,
\label{GL-inv}
\ee
where $\la$ and $\mu$ are non-zero complex numbers (the eigenvalues).
We argue that $A$ is $\k$-invariant for certain values of mixture parameters.
Indeed, it  is known to be a $K$-matrix for a certain coideal subalgebra  $U_q(\k)$ with the  minimal symmetric grading on $V$
when $\l\subset \k$ is $\s\l(N-2m)$. cf.  \cite{AMS}. Since $A$ is independent of $q$, it is $\k$-invariant in the limit $q=1$.
It will stay invariant if we replace $\hat \l=\g\l(N-2m)$ with an arbitrary graded $\g\l= \End(\C^{N-2m})$, because
$A$ is a scalar on $\C^{N-2m}$ (the term proportional to $\la$ in (\ref{GL-inv})).
Furthermore, notice that we can choose simple root vectors and mixture parameters such that
the mixed generators $x_\al$ with $\tau(\al)\not =\al$ will be given by the same formulas independent of the grading:
$$
x_k=e_{k,k+1}+c_k e_{k',(k+1)'},\quad k<m, \quad m'<k,
$$
$$
x_m=e_{m,m+1}+c_m e_{m',m+1}, \quad x_{m'}=e_{(m+1)',m'}+c_{m'} e_{(m+1)',m}, \quad \tau(\al)\not =\al, \quad \forall \al\in \bar \Pi_\l.
$$
Let us demonstrate  that  generators $y_\al=h_{\al}-h_{\tilde \al}=h_\al+h_{\theta(\al)}\in \k$ with $\al\in \Pi_\l$  commute with $A$
regardless of the grading.
First of all, observe that the linear mapping $\h^*\ni \la \mapsto h_\la\in \h$ acts on the basic weights by the assignment
$$
\zt_i\mapsto (-1)^{\bar i} e_{i,i}, \quad i=1, \ldots, N.
$$
The operator $\theta$ is even and takes $\zt_i$ to $\zt_{i'}$ for $i\leqslant m$. Due to  the equality $\bar i= \bar i'$ we can
write 
$$
h_{\al_i}-h_{\tilde \al_i}=(-1)^{\bar i}( e_{i,i}+e_{i',i'})-(i\mapsto i+1) , \quad i<m,
$$
$$
h_{\al_m}-h_{\tilde \al_m}=(-1)^{\bar m}( e_{m,m}+e_{m',m'}) \mod \l.
$$
This way we reduce verification to calculations in the 2-dimensional subspaces $\Span(v_i,v_{i'})$ for each $i=1,\ldots, m$, where it becomes obvious.

The diagram with odd $\al_m=\tau(\al_m)$ fell  beyond the scope of \cite{AMS} and should be processed  separately.
We choose the  corresponding mixed  generator as
$$
x_m=(-1)^{\bar m+\bar m'}e_{m,m'}+c_m^2 e_{m',m},   \quad \tau(\al_m)=\al_m.
$$
Since $\tau(\al_m)=\al_m$, the generator $y_{\al_m}$ is zero. 

Let us compute the relation between the parameters of the matrix $A$ and the mixture parameters of the algebra $\k$.
\begin{propn}
  The matrix (\ref{GL-inv}) is $\k$-invariant provided $c_i=c_{i'}$, for $i=1,\ldots, m-1$, and
$$
a_{k+1}=a_k c_k, \quad a_{(k+1)'}=a_{k'}/c_k, \quad k=1,\ldots, m-1,
$$
$$
\frac{a_{m'}}{a_m}=c_m c_{m'},
\quad
-\mu
=c_m a_m
,
\quad
\la
= \frac{a_{m'}}{c_m}, \quad \tau(\al_m)\not =\al_m,
$$
$$
\frac{a_{m'}}{a_m}=(-1)^{\bar m+\bar m'} c_m^2,\quad \la=-\mu=\sqrt{(-1)^{\bar m+\bar m'}}\>\frac{a_{m'}}{c_m}, \quad \tau(\al_m)=\al_m \in \bar \Pi_\l.
$$
\end{propn}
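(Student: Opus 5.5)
Since $\k$ is generated by $\l$, the elements $y_\al$ and the mixed generators $x_\al$, $\al\in\bar\Pi_\l$, it suffices to check that each generator supercommutes with $A$, i.e. $[\rho(x),A]=0$ for each generator $x$. Note that $A$ is even, because $\bar i=\bar i'$ for $i<m$ and the off-diagonal entries $e_{i,i'}$ with $i\le m$ connect vectors of equal parity when $\tau(\al_m)\ne\al_m$. In the odd case the entries $e_{m,m'},e_{m',m}$ may be odd; there we treat $A$ as a homogeneous matrix and use the graded commutator throughout.

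\textbf{Generators of $\l$ and the $y_\al$.} By construction $A$ restricted to $\Span\{v_{m+1},\dots,v_{N-m}\}$ is the scalar $\la$, and $A$ has no entries connecting this block to its complement except through $e_{m,m'}$-type terms, which involve only indices $\le m$ or $\ge m'$. Hence $\hat\l$, and a fortiori $\l$, commutes with $A$. For the $y_\al$ we use the formulas for $h_{\al_i}-h_{\tilde\al_i}$ derived just before the statement. Each of them is a combination of $e_{ii}+e_{i'i'}$, modulo $\l$, and the matrices $e_{ii}+e_{i'i'}$ commute with the $2\times2$ blocks of $A$ on $\Span(v_i,v_{i'})$. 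So these generators impose no conditions.

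\textbf{Mixed generators with $k<m$.} We compute $[x_k,A]$ with $x_k=e_{k,k+1}+c_ke_{k',(k+1)'}$, and similarly for $x_{k'}$. Only the blocks on $\Span(v_k,v_{k'})$ and $\Span(v_{k+1},v_{(k+1)'})$ contribute, and both have the same diagonal $(\mu+\la,\mu+\la)$. The diagonal parts therefore cancel, and the vanishing of the coefficients of $e_{k,(k+1)'}$ and $e_{k',k+1}$ gives
\[
a_{k+1}=a_kc_k,\qquad a_{(k+1)'}=a_{k'}/c_k .
\]
Treating $x_{k'}$, whose mixed partner is the reversed pair, in the same way forces $c_k=c_{k'}$. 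The conditions are consistent with $a_ka_{k'}=-\la\mu$, since the product $a_ka_{k'}$ is preserved from one step to the next.

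\textbf{Mixed generators at $m$.} For $\tau(\al_m)\ne\al_m$ the generator $x_m=e_{m,m+1}+c_me_{m',m+1}$ maps $v_{m+1}$, an eigenvector with eigenvalue $\la$, into $\Span(v_m,v_{m'})$. Invariance means that the vector $v_m+c_mv_{m'}$ is an eigenvector of the $2\times2$ block
\[
\begin{pmatrix}\mu+\la & a_m\\ a_{m'} & \mu+\la\end{pmatrix}
\]
with eigenvalue $\la$. This gives $\mu+c_ma_m=0$ and $a_{m'}=\la c_m$, i.e. $-\mu=c_ma_m$ and $\la=a_{m'}/c_m$. Dually, $x_{m'}$ requires a left eigen-covector, which yields the relation on $c_{m'}$. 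Combining it with $a_ma_{m'}=-\la\mu$ gives $a_{m'}/a_m=c_mc_{m'}$.

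For odd $\al_m=\tau(\al_m)$ one has $m'=m+1$, and everything reduces to the $(1|1)$ block on $\Span(v_m,v_{m'})$. Here the graded commutator of the odd element $x_m$ with the off-diagonal part of $A$ produces signs $(-1)^{\bar m+\bar m'}$. Solving the resulting $2\times2$ equations yields $a_{m'}/a_m=(-1)^{\bar m+\bar m'}c_m^2$ and forces $\la=-\mu$. Then the constraint $a_ma_{m'}=-\la\mu=\la^2$ gives $\la=\sqrt{(-1)^{\bar m+\bar m'}}\,a_{m'}/c_m$.

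\textbf{Main obstacle.} I expect the main difficulty to be this last odd case. The sign bookkeeping in the supercommutator, including the parity of $A$'s off-diagonal entries, has to be done carefully; the rest is routine matrix multiplication within $2\times2$ blocks.
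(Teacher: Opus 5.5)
Your strategy of checking each generator of $\k$ on the $2\times2$ blocks $\Span(v_i,v_{i'})$ is the same ``direct calculation'' the paper relies on. However, two steps fail as written.

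\textbf{The diagonal of (\ref{GL-inv}) is misread.} The term $(\mu+\lambda)\sum_{i=1}^m e_{ii}$ only touches $v_1,\dots,v_m$. So the block on $\Span(v_i,v_{i'})$ is $\begin{pmatrix}\lambda+\mu & a_i\\ a_{i'} & 0\end{pmatrix}$, and its eigenvalues are $\lambda,\mu$ precisely because $a_ia_{i'}=-\lambda\mu$.

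With the block you wrote, the condition $A(v_m+c_mv_{m'})=\lambda(v_m+c_mv_{m'})$ gives $\mu=-c_ma_m$ and $a_{m'}=-\mu c_m$. Then $a_ma_{m'}=-\lambda\mu$ forces $\lambda=-\mu$. So $a_{m'}=\lambda c_m$ does not follow from your own matrix, and that matrix is in fact not invariant for generic $\lambda,\mu$.

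With the correct block you get exactly $-\mu=c_ma_m$ and $\lambda=a_{m'}/c_m$. The covector condition for $x_{m'}$ then gives $c_{m'}a_m=\lambda$ and $a_{m'}=-\mu c_{m'}$, hence $a_{m'}/a_m=c_mc_{m'}$ directly. Your step for $k<m$ survives, because the two blocks involved share the same diagonal pattern $(\lambda+\mu,0)$, not $(\lambda+\mu,\lambda+\mu)$.

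\textbf{In the odd case $\tau(\alpha_m)=\alpha_m$, the bracket you commit to gives the wrong sign.} The even diagonal part of $A$ contributes $(\lambda+\mu)(s\,e_{m,m'}-c_m^2e_{m',m})$ to any bracket with $x_m$, where $s=(-1)^{\bar m+\bar m'}$. The off-diagonal part contributes only diagonal terms. So $\lambda+\mu=0$ is forced under either convention, and then $A=a_me_{m,m'}+a_{m'}e_{m',m}$ is odd. With $x_m=s\,e_{m,m'}+c_m^2e_{m',m}$ one finds
\[
x_mA+Ax_m=(s\,a_{m'}+c_m^2a_m)(e_{mm}+e_{m'm'}),\qquad x_mA-Ax_m=(s\,a_{m'}-c_m^2a_m)(e_{mm}-e_{m'm'}).
\]
Since $\alpha_m$ is odd, $s=-1$. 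The graded commutator (here the anticommutator) therefore yields $a_{m'}/a_m=c_m^2$, and $\lambda^2=a_ma_{m'}$ gives $\lambda=\pm a_{m'}/c_m$.

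The statement instead asserts $a_{m'}/a_m=-c_m^2$ and $\lambda=\sqrt{-1}\,a_{m'}/c_m$. Those relations come from the ordinary commutator $\rho(x)A-A\rho(x)$, which is the adjoint action fixed at the beginning of Section~\ref{Sec_Nontriviality}.

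Your sentence ``solving the resulting equations yields\ldots'' asserts the result rather than computing it, and under your stated convention it is false. You must either use the ungraded commutator or flip the sign in the normalization of $x_m$.
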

\begin{proof}
  Direct calculation.
\end{proof}

\begin{corollary}
A  pair $(\g,\k)$ with Satake diagram (\ref{GL-I})  is non-trivial for any  $\vec c$ subject to $c_\al=c_{\tau(\al)}$.
\end{corollary}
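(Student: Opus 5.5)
The plan is to deduce the corollary from the preceding proposition. The idea is that $\k$ has an adjoint invariant that $\g$ does not have. Since $V$ is an irreducible $\g$-module, the only adjoint $\g$-invariants in $\End(V)$ are scalar matrices. So it suffices to produce, for every admissible $\vec c$, a non-scalar matrix $A$ of the form (\ref{GL-inv}) commuting with $\rho(\k)$. Then $\k\subsetneq\g$, and the pair $(\g,\k)$ is non-trivial.

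The first step is to take an arbitrary $\vec c$ with nonzero components and $c_\al=c_{\tau(\al)}$. This is exactly the condition $c_i=c_{i'}$ for $i<m$ in the proposition. Next we solve the system of relations in the proposition for the parameters of $A$.

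\emph{Case $\tau(\al_m)\neq\al_m$.} Fix $a_1\neq 0$ arbitrarily and define $a_{k+1}=a_kc_k$ recursively for $k<m$; this determines $a_m\neq0$. Then set $a_{m'}=c_mc_{m'}a_m$, $\mu=-c_ma_m$ and $\la=a_{m'}/c_m$. Finally recover $a_{k'}$ for $k<m$ from $a_{(k+1)'}=a_{k'}/c_k$, going downward from $a_{m'}$. All these quantities are nonzero.

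\emph{Case $\tau(\al_m)=\al_m$ odd.} Here we put $a_{m'}=(-1)^{\bar m+\bar m'}c_m^2a_m$ and $\la=-\mu=\sqrt{(-1)^{\bar m+\bar m'}}\,a_{m'}/c_m$, choosing a branch of the square root.

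In both cases we must check that the remaining constraint $a_ia_{i'}=-\la\mu$ of (\ref{GL-inv}) holds for all $i\leqslant m$. The recursions give $a_{k+1}a_{(k+1)'}=a_ka_{k'}$, so all the products $a_ia_{i'}$ are equal. It therefore suffices to check $i=m$. In the first case
$$
-\la\mu=\frac{a_{m'}}{c_m}\,c_ma_m=a_ma_{m'},
$$
and the second case is analogous. The proposition then gives $\k$-invariance of $A$.

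The last step is to observe that $A$ is not scalar. The off-diagonal entries $a_i,a_{i'}$ are nonzero whenever $m\geqslant1$, which always holds since $\bar\Pi_\l\neq\varnothing$. Also $\mu\neq0$, so the diagonal of $A$ already distinguishes $\Span\{v_i\}_{i\leqslant m}$ from the middle block.

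The only delicate point is consistency of the overdetermined system. Each $c_k$ appears in two relations, one for $a_{k+1}$ and one for $a_{(k+1)'}$, and these must be compatible with a single product $a_ia_{i'}$. This is exactly where the hypothesis $c_\al=c_{\tau(\al)}$ is used. I expect the main care to be needed in verifying this compatibility together with the sign $(-1)^{\bar m+\bar m'}$ in the odd fixed-node case. Everything else is routine substitution.
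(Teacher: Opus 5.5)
Your proposal is correct and follows the paper's route: the corollary is read off from the preceding proposition, since a non-scalar matrix of the form (\ref{GL-inv}) commuting with $\rho(\k)$ rules out $\k=\g$, the only adjoint $\g$-invariants of the irreducible module $V$ being scalars. What you add is detail the paper leaves implicit: the solvability of the parameter system (the recursions keep $a_ia_{i'}$ constant, and $a_ma_{m'}=-\la\mu$ holds in both cases) and the explicit check that $A$ is non-scalar.
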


As a byproduct of the above analysis, we explain an interesting fact  observed in \cite{AlgMS1} that $K$-matrix of type $\Ag$ is independent of grading
on $V$.
The only requirement is that $K$ should be even, which is fulfilled for (\ref{GL-inv}).  Indeed,
the grading relative to the right diagram in (\ref{GL-I}) is symmetric on the subspace $\Span\{v_i,v_{i'}\}_{i=1}^m$. Furthermore,
the restriction of $K$ to $\Span\{v_i\}_{m<i<m'}$ is a scalar matrix, which  is even relative to any grading on this subspace,
thereby allowing for arbitrary polarization of  $\l$.

\subsection{Twisted matrix invariants of shaft $\k$}
\label{Sec_twisted_GL}
In this section, $\g$ is  general linear. We study  Satake diagrams with $\tau=\id$.
Let us process the outstanding diagrams (\ref{3NODES}) first.

The case of $\rk \> \g=1$ depends on the parity of the only root $\al$.
The mixed operators
$
x_\al=e_{12}+c_\al e_{21}
$ can be fixed independent of grading.
The matrix $e_{12}+(-1)^{\bar \al}ce_{21}$  is twised $\k$-invariant for even $\al$ and adjoint $\k$-invariant
for odd $\al$. This proves that $\k$ is proper. Note that the matrix is odd for $\g=\g\l(1|1)$.

Turning to the right diagram  in (\ref{3NODES}), we find that the  matrix
$e_{12}-e_{21}-c_2(e_{34}-e_{43}$).
is fixed by $x_2=e_{23}+c_2e_{41}$  with respect to the twisted action.

Next we focus on invariants of the shaft subalgebras.
With every white root $\al\in \bar \Pi_\l$ we relate a mixed generator
$$
x_\al=e_\al+(-1)^{\deg(x_\al)} c_\al f_{\tilde \al}.
$$
 The root vector $f_{\tilde \al}$ is normalized as follows.
If $\bt$ is the only (even) black root linked to $\al$, then we set $f_{\tilde \al}=[f_{\bt},f_\al]$. If  $\al$ is connected
to a pair of  black roots $\bt_i$, $i=1,2$, then we take $f_{\tilde \al}=[f_{\bt_2},[f_{\bt_1},f_\al]]$.

Let $V$ be graded vector space of the basic module for $\g$. We choose simple root vectors   $\g$  as
$$
f_{\al_i}\mapsto e_{i+1,i}, \quad  e_{\al_i}\mapsto (-1)^{\bar i} e_{i,i+1}.
$$
Let us enumerate white roots using the upper index in $\al^i$, where it varies from $1$ to the white rank $\ell=|\bar \Pi_\l|$, reverting the initial order on $\Pi$.
In this section we find  $\k$-invariants of the action
$$
\rho(x)A+(-1)^{\deg(A)\deg( x)} A\rho^t(x), \quad x\in \g, \quad A\in \End(V),
$$
where $t$ is the matrix super-transposition $A^t_{ij}=(-1)^{(\bar i+\bar j)\bar j}A_{ji}$.
We are concerned only with even $A$, for which the invariance  condition  simplifies to
$$
\rho(x)A+ A\rho^t(x)=0, \quad x\in \k.
$$
Note that $\g$-invariants of this kind reduce to the zero matrix only.

Consider an even  block-diagonal matrix
\be
\label{twisted-block}
\mathcal{I}_{\ell}=a(a_{\ell+1}\si_{\ell+1}+\ldots +a_2\si_2+ a_1 \si_1),
\ee
where $a_1=1$, $a_{i+1}=\prod_{k=1}^i c^{-1}_{\al^k}$,
and the block $\si_i$ is of size 1 or
$
\si_i=\left(
\begin{array}{ccc}
  0 & 1 \\
  -1 & 0
\end{array}
\right)
$.
They are consecutively described from the bottom to top as follows.
Suppose that $\si_k$ is defined for all $k=0,\ldots, i$ (we assume $\si_0=0$).
Then the next block $\si_{i+1}$ up the diagonal is of size 2 if the white root $\al^{i+1}$ is followed by a black root on the left in the Satake diagram.
Otherwise it is of size 1.
\begin{propn}
  The matrix $\mathcal{I}_\ell$ is $\k$-invariant.
\label{shaft_invariants}
\end{propn}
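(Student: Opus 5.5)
Since $\k$ is generated by $\l$, the elements $y_\al$, and the mixed generators $x_\al$, $\al\in\bar\Pi_\l$, and the twisted action $x\mapsto \rho(x)A+A\rho^t(x)$ is a Lie superalgebra action on $\End(V)$, it suffices to check the invariance condition $\rho(x)\mathcal{I}_\ell+\mathcal{I}_\ell\rho^t(x)=0$ on generators. We treat three kinds of generators in turn.

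\emph{Cartan part.} For $\tau=\id$ we have $\tilde\al=\al$ for every white $\al$ not linked to black nodes, so $y_\al=0$. In general $y_\al$ lies in $\h+\l$, so it reduces to the remaining cases. For a diagonal $h$ with entries $h_i$, the twisted condition on a block $\si_j$ of size 2 occupying rows $i,i+1$ reads $h_i+h_{i+1}=0$, and on a block of size 1 it forces $h_i=0$. So it suffices to check the Cartan elements of $\k$ against these conditions.

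\emph{Black generators.} Black roots are even and, by the structure of shaft diagrams, occur as isolated nodes inside alternating even components; each is adjacent to white nodes only. We check that a black root $\bt=\zt_i-\zt_{i+1}$ always sits exactly on a $2\times 2$ block $\si$. Indeed, by the inductive description, the white root immediately to its right, read in reverse order, is ``followed by a black root on the left'', so $v_i,v_{i+1}$ span one block. The subalgebra $\l_\bt\simeq\s\l(2)$ (even, since $\bar i=\bar{i+1}$) then preserves the skew form $\si$, which is the twisted invariance $\rho(x)\si+\si\rho^t(x)=0$ for $x\in\s\l(2)$. It acts by zero on the other blocks, and the off-diagonal block terms vanish because the support of $\rho(x)$ lies inside the block. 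The Cartan elements $h_\bt\in\l$ satisfy $h_i+h_{i+1}=0$ up to the sign $(-1)^{\bar i}$ common to both entries.

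\emph{Mixed generators (main step).} Each white $\al^k$ gives $x_{\al^k}=e_{\al^k}+(-1)^{\deg x}c_{\al^k}f_{\tilde\al^k}$. This element has support connecting two neighbouring blocks $\si_k$ and $\si_{k+1}$: $e_{\al^k}$ maps the top index of block $k$ to the bottom index of block $k+1$, and $f_{\tilde\al^k}$ maps back between the other extremal indices. With the normalisations $f_{\tilde\al}=[f_\bt,f_\al]$ or $[f_{\bt_2},[f_{\bt_1},f_\al]]$, $f_{\tilde\al^k}$ is a single matrix unit up to sign. The invariance condition then reduces to a computation in at most a $4\times4$ corner spanned by blocks $k$ and $k+1$. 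It gives $a_{k+1}c_{\al^k}=a_k$ times a sign, and the signs cancel against $(-1)^{\deg x}$, the super-transposition signs $(-1)^{(\bar i+\bar j)\bar j}$, and the choice $e_{\al_i}\mapsto(-1)^{\bar i}e_{i,i+1}$. This produces precisely $a_{k+1}=\prod_{j\le k}c_{\al^j}^{-1}$.

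The main obstacle is this sign bookkeeping. It must be done separately for the four configurations, according to whether blocks $k$ and $k+1$ have sizes $1$ or $2$, and whether $\al^k$ is even or odd. We plan to handle it by reducing each configuration to the smallest model diagrams (e.g. $\g\l(2)$, $\g\l(1|1)$, $\g\l(3)$, $\g\l(2|1)$, $\g\l(4)$), checked directly. The key point to verify is that the odd-odd case (two adjacent size-1 blocks, odd $\al$) is consistent with $A$ being even, using $\bar i+\bar{i+1}=1$. Finally, we note that $\mathcal{I}_\ell\neq0$ while the only twisted $\g$-invariant is zero, although that observation is needed only for the subsequent nontriviality corollary.
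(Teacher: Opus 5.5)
Your generator-by-generator plan is a legitimate alternative to the paper's induction, which glues a $\k^l$-invariant for $D(\al)$ to a $\k^r$-invariant along a common block. However, the step you flag as the key verification is false, and this exposes the ingredient you are missing.

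Take an odd white root $\al=\zt_j-\zt_{j+1}$ lying between two size-$1$ blocks with entries $d_1,d_2$, and write $\rho(x_\al)=p\,e_{j,j+1}+q\,e_{j+1,j}$. Using $(e_{kl})^t=(-1)^{(\bar k+\bar l)\bar k}e_{lk}$, the $(j,j+1)$ and $(j+1,j)$ entries of $\rho(x_\al)\mathcal{I}_\ell+\mathcal{I}_\ell\rho^t(x_\al)$ give
$pd_2+(-1)^{\overline{j+1}}qd_1=0$ and $pd_2+(-1)^{\bar j}qd_1=0$.
Since $\bar j\neq\overline{j+1}$, these force $d_1=d_2=0$. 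Conceptually, $[x_\al,x_\al]=2pq\,(e_{jj}+e_{j+1,j+1})\in\k$, and it multiplies the diagonal entries at $j$ and $j+1$ of any twisted invariant by $4pq\neq0$. An analogous pair of incompatible conditions arises for an even white root with exactly one black neighbour, which is the configuration excluded by Lemma \ref{non-grad-sel-rule}. So some of your a priori configurations admit no nonzero invariant at all. In particular $\g\l(1|1)$, which you list as a model, is outside the scope of the proposition; the paper treats it separately among the diagrams (\ref{3NODES}), using the adjoint rather than the twisted action.

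What makes the proof work is the shaft classification coming from the selection rules: Lemmas \ref{non-grad-sel-rule} and \ref{isolated odd}, together with the alternation of even components. The paper uses this when it asserts that $S^l=D(\al)$ is one of three small diagrams. By the classification, every white root is of exactly one of three kinds:
\begin{itemize}
\item even with no black neighbour, giving two size-$1$ blocks of equal parity;
\item even and flanked by two black roots, giving two size-$2$ blocks on four indices of equal parity (the classical symplectic situation);
\item odd with exactly one black neighbour, giving a size-$2$ block next to a size-$1$ block.
\end{itemize}
In each of these three cases the two entry conditions coincide, and you get a single relation between consecutive $a_k$. In the odd case, the antisymmetry of the $2\times2$ block $\si$ supplies exactly the sign by which the super-transposition distinguishes the two entries.

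If you derive these three configurations from the classification, instead of enumerating your ``four configurations'' a priori, your direct check goes through. It is then a somewhat more transparent version of the paper's argument, since it avoids rescaling and gluing invariants. A minor point: $y_\al$ equals $-h_\bt$ or $-(h_{\bt_1}+h_{\bt_2})$, so it lies in the Cartan part of $\l$. Membership in $\h+\l$ alone would not suffice, since generic elements of $\h$ do not fix $\mathcal{I}_\ell$.
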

\begin{proof}
We do induction on the white rank  $\ell$ of the  diagram $S$. We take for the induction base the  diagrams of rank $\leqslant 3$ with
$\ell\leqslant 2$:
$$
\begin{picture}(4,10)
\put(1,3){\circle{3}}
\end{picture},
\quad
\begin{picture}(4,10)
\put(1,3){\circle*{3}}
\end{picture},
\quad
\begin{picture}(15,10)
\put(0.5,1.5){\framebox(3,3)}
\put(4,3){\line(1,0){7}}
\put(12,3){\circle*{3}}
\end{picture}
,
\quad
\begin{picture}(15,10)
\put(2,3){\circle{3}}
\put(4,3){\line(1,0){7}}
\put(12,3){\circle{3}}
\end{picture}
,
\quad
\begin{picture}(15,10)
\put(2,3){\circle*{3}}
\put(10.5,1.5){\framebox(3,3)}
\put(3,3){\line(1,0){7}}
\end{picture}
,\quad
\begin{picture}(25,10)
\put(2,3){\circle*{3}}
\put(4,3){\line(1,0){7}}
\put(12,3){\circle{3}}
\put(14,3){\line(1,0){7}}
\put(22,3){\circle*{3}}
\end{picture}
,\quad
\begin{picture}(25,10)
\put(2,3){\circle*{3}}
\put(3,3){\line(1,0){7}}
\put(10.5,1.5){\framebox(3,3)}
\put(14,3){\line(1,0){7}}
\put(22,3){\circle{3}}
\end{picture}
,\quad
\begin{picture}(25,10)
\put(2,3){\circle{3}}
\put(3,3){\line(1,0){7}}
\put(10.5,1.5){\framebox(3,3)}
\put(14,3){\line(1,0){7}}
\put(22,3){\circle*{3}}
\end{picture}
$$
For them the statement is checked directly.

Suppose that $\ell>2$.  Remove the leftmost white node $\al$ together with the part of $S$ on the left  of it (two nodes at most altogether).
The remaining part $S^r$ on the right is a Satake sub-diagram, for which the statement is true by induction assumption. Let $\k^r$
denote the shaft spherical subalgebra determined by $S^r$:
$$
\begin{picture}(150,20)
\put(20,0){$S=$}
\put(70,5){\oval(40,20)}\put(105,5){\oval(50,16)}
\put(54,1){$S^l$}\put(69,2){$\al $}\put(107,1){$S^r$}
\end{picture}
$$
The initial diagram is restored by appending the removed sub-graph to $S^r$. This case reduces to  already considered
situation with $\ell\leqslant 2$ if we ignore the nodes from $S^r$ that are not in the Satake
subdiagram $S^l=D(\al)$. The latter can be  one of
$$
\begin{picture}(15,10)
\put(0,7){$\scriptstyle \al$}
\put(2,3){\circle{3}}
\end{picture}
\quad
\begin{picture}(25,10)
\put(10,7){$\scriptstyle \al$}
\put(2,3){\circle*{3}}
\put(3,3){\line(1,0){7}}
\put(12,3){\circle{3}}
\put(14,3){\line(1,0){7}}
\put(22,3){\circle*{3}}
\end{picture}
\quad
 \begin{picture}(15,10)
\put(0.5,1.5){\framebox(3,3)}
\put(4,3){\line(1,0){7}}
\put(12,3){\circle*{3}}
\put(0,7){$\scriptstyle \al$}
\end{picture}
$$
Denote by $\k^l$ the spherical sub-diagram determined by this restriction. Let $m$ be the white rank of $S^l$.
The $\k^l$-invariant matrix $\mathcal{I}_m$ has exactly $m+1$ diagonal blocks. It can be scaled so that its first (lowest) block coincides with the  $\ell$-th (upper) block
in the $\k^r$-invariant matrix $\mathcal{I}_{\ell-1}$ and is therefore $\k$-invariant. The  block with number $m+1$ (upper) block in $\mathcal{I}_2$ is $\k^r$-invariant,
while the blocks with numbers $i< \ell$ in $\mathcal{I}_{\ell-1}$ are $\k^l$-invariant.
The  extension $\mathcal{I}_{\ell}$  of $\mathcal{I}_{\ell-1}$  by the $2$-nd and $m+1$-st blocks of  $\mathcal{I}_m$  is  invariant with respect to the entire $\k$.
\end{proof}
\noindent
Thus we have proved that shaft spherical subalgebras are non-trivial for each vector of non-zero mixture parameters.
\subsection{Transposition of shaft spherical subalgebras}
In this section we explore  how transposition affects shaft spherical subalgebra
$\k=\k(\vec c)$, that corresponds to  a Satake diagram of shape $\Ag$ with $\tau=\id$.
We will use these results  to construct invariants for spherical subalgebras of types $\Bg$, $\Cg$, and $\Dg$.
\begin{propn}
\label{transpose k}
  The matrix super-transposition takes $\k(\vec c)$ to
  the subalgebra $\k(\vec {c'}$), where $c'_\al=\pm c_\al^{-1}$ and the
  sign depends on the type of $D(\al)$. In particular, all signs equal  $+1$ if the grading is zero on the support
  modules of black $\s\l(2)$-subalgebras.
\end{propn}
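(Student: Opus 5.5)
The plan is to use that super-transposition $t\colon A\mapsto A^t$ is an even anti-automorphism of the graded matrix algebra. Hence $x\mapsto -x^t$ is an automorphism of the Lie superalgebra $\End(V)=\g\l(V)$. This automorphism preserves $\h$ (diagonal matrices are fixed by $t$, so $h\mapsto -h$). It sends $\g_\al$ to $\g_{-\al}$, since $e_{ij}^t=\pm e_{ji}$. Therefore $-t$ maps a subalgebra generated by $\l$ and the elements $x_\al,y_\al$ to the subalgebra generated by their images. I will check that these images are again generators of the same shape, with mixture parameters inverted up to sign.

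The first step concerns $\l$. Because $\tau=\id$, all black roots are even, and each black component of a shaft diagram is a single node (an $\s\l(2)$). For a black $\bt=\zt_i-\zt_{i+1}$ we have $e_\bt^t=\pm f_\bt$ and $f_\bt^t=\pm e_\bt$, so $\l^t=\l$. Also $y_\al=0$ because $\tilde\al$ and $\al$ differ by black roots only. Indeed, when $\tau=\id$, $y_\al=h_\al-h_{\tilde\al}\in\h_\l\subset\l$, and this is preserved.

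The second step treats the mixed generators $x_\al=e_\al+(-1)^{\deg x_\al}c_\al f_{\tilde\al}$, where $f_{\tilde\al}$ is normalized as $[f_\bt,f_\al]$ or $[f_{\bt_2},[f_{\bt_1},f_\al]]$. Here $\tilde\al$ equals $\al$, $\al+\bt$, or $\al+\bt_1+\bt_2$. Applying $t$ gives
$$x_\al^t=\epsilon_1 f_\al+(-1)^{\deg x_\al}c_\al\,\epsilon_2 e_{\tilde\al},$$
where the signs $\epsilon_{1,2}$ come from the rule $A^t_{ij}=(-1)^{(\bar i+\bar j)\bar j}A_{ji}$ applied to the matrix units. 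I then act on $x_\al^t$ by $\mathrm{ad}$ of black root vectors, which lie in $\k$ and are preserved by $t$: apply $\mathrm{ad}f_\bt$ (or $\mathrm{ad}f_{\bt_1}$, $\mathrm{ad}f_{\bt_2}$). This kills $f_\al$ up to terms in $\k$ and carries $e_{\tilde\al}$ down to $e_\al$. More cleanly, I use that $V^+_\al$ is an irreducible $\l$-module (Lemma \ref{high-low}) on which $w_\l$ swaps the lowest vector $e_\al$ and the highest vector $e_{\tilde\al}$. So the $\l$-submodule generated by $x_\al^t$ also contains $e_\al+\kappa_\al c_\al^{-1} f_{\tilde\al}$, with $\kappa_\al=\pm1$ a sign built from $\epsilon_1,\epsilon_2$, the factor $(-1)^{\deg x_\al}$, and the structure constants of the black $\s\l(2)$'s acting on the normalized $f_{\tilde\al}$. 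This identifies $\k(\vec c)^t$ (up to the sign automorphism) with $\k(\vec c')$, where $c'_\al=\pm c_\al^{-1}$. Since $D(\al)$ determines the local data, the sign depends only on the type of $D(\al)$: either an isolated white node, or a white node with one or two black neighbours, of either parity.

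The last step, and the expected main obstacle, is the sign bookkeeping. I will compute $\kappa_\al$ explicitly in the $2\times2$ or $3\times3$ (or $4\times4$) block supporting $D(\al)$, using $e_{\al_i}=(-1)^{\bar i}e_{i,i+1}$ and $f_{\al_i}=e_{i+1,i}$. If the black $\s\l(2)$ acts on an even support $\Span(v_i,v_{i+1})$, then the transposition signs $(-1)^{(\bar i+\bar j)\bar j}$ from the black part and the compensating commutator signs cancel. This leaves $\kappa_\al=+1$, giving the final claim. When the support is odd, a residual $-1$ may appear, which accounts for the $\pm$.
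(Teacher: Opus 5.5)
Your proposal follows essentially the same route as the paper: transpose each mixed generator according to the type of $D(\al)$, bring it back to a lowest-vector mixed generator by applying $\mathrm{ad}\, f_\bt$ for the black neighbours of $\al$, and read off $c'_\al=\pm c_\al^{-1}$ (the paper tracks the sign through $(\al,\bt)$, you through explicit matrix units, which amounts to the same computation). Two small slips need fixing: $y_\al=h_\al-h_{\tilde\al}$ is not zero but lies in $\h_\l\subset\l$, as you go on to say, and $\mathrm{ad}\, f_\bt$ does not kill $f_\al$ but carries it to $f_{\tilde\al}$, which is exactly what produces the $f_{\tilde\al}$-component of the new generator.
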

\begin{proof}
  Clearly the transposition preserves $\l\subset \k$ (the Levi core of $\k$ is a direct sum of $\s\l(2)$).
Transformation of  the mixture parameters of $\k$ are studied next. There are three possible cases
depending on the sub-diagram $D(\al)\subset S$, $\al \in \bar \Pi_\l$.

Assume first that $\al\in \bar \Pi_\l$ isolated  even: $D(\al)=
\begin{picture}(4,10)
\put(1,3){\circle{3}}
\end{picture}.
$
Applying the transposition to the mixed generator,  $x_\al=e_\al-c_\al f_\al=x_\al(c_\al)$, we get
$$
x_\al^t=f_\al-c_\al e_{\al}=-c_\al(  e_{\al}-c_\al^{-1}f_\al)\propto x_\al(c_\al^{-1}).
$$
Now suppose that $D(\al)=\begin{picture}(25,10)
\put(2,3){\circle*{3}}
\put(4,3){\line(1,0){7}}
\put(12,3){\circle{3}}
\put(14,3){\line(1,0){7}}
\put(22,3){\circle*{3}}
\end{picture}$.
Let $\bt,\gm$ be the black even roots linked with $\al$.
The mixed generator
$
x_\al(c_\al)=e_\al-c_\al [f_\gm,[f_\bt,f_\al]]
$
is the lowest vector in an $\l$-module it generates.
 Applying transposition
we get the highest vector
$$
 x_\al^t=f_\al-c_\al [[e_\al,e_\bt],e_\gm].
$$
We return back to the lowest  vector $[f_\gm,[f_\bt,x_\al^t]]$:
$$
[f_\gm,[f_\bt,f_\al]]-c_\al [[e_\al,h_\bt],h_\gm]=[f_\gm,[f_\bt,f_\al]]-c_\al (\al,\bt)(\al,\gm)e_\al\propto x_\al\bigl((\al,\bt)(\al,\gm)c_\al^{-1}\bigr)
=x_\al\bigl(c_\al^{-1}\bigr),
$$
because $(\al,\bt)=(\al,\gm)=\pm 1$.
Finally, suppose that $\al$ is odd and $D(\al)\simeq
\begin{picture}(15,10)
\put(2,3){\circle*{3}}
\put(10.5,1.5){\framebox(3,3)}
\put(3,3){\line(1,0){7}}
\end{picture}
$, where the black root is $\bt$. The generator $x_\al=x_\al(c_\al)=e_\al+c_\al [f_\bt,f_\al]$ is transposed to
$$
x_\al^t=\pm( f_\al- c_\al [e_\al,e_\bt]),
$$
where the signs depend on the grading.
Returning to the lowest vector in this $\l$-submodule, we find
$$
[f_\bt,x_\al]\propto ([f_\bt,f_\al]-c_\al [h_\bt,e_\al])\propto [f_\bt,f_\al]-c_\al (\al,\bt)e_\al\propto x_\al\bigl(-(\al,\bt)c_\al^{-1}\bigr)= x_\al\bigl(c_\al^{-1}\bigr),
$$
provided that $(\al, \bt)=-1$. This condition will be fulfilled if we set the grading to zero  within the support of any
even component (and hence all) with black node.

The proof is complete, because
$D(\al)\simeq\begin{picture}(15,10)
\put(2,3){\circle*{3}}
\put(4,3){\line(1,0){7}}
\put(12,3){\circle{3}}
\end{picture}$
does not appear in a shaft diagram $S$ (it violates the selection rules).
\end{proof}

\subsection{Shaft $\k$-invariants via embedding $\Ag\subset \Bg,\Cg,\Dg$}
Satake diagrams for ortho-symplectic  $\g$ can be obtained by gluing up their  tail  and shaft
parts on a small intersection. It is therefore natural to construct invariants of their spherical subalgebras
out of invariants of smaller  subalgebras. To that end, we need to study  matrix invariants of a shaft
subalgebra via its embedding in ortho-symplectic Lie superalgebra (under the adjoint  action rather than twisted considered in the
previous section).

Let $\g$ be a general linear superalgebra, and $V\simeq \C^N$ its basic graded module with the natural basis vectors $v_i$ of weight $\zt_i$ and grade $\deg(v_i)=\bar i$,
$i=1,\ldots,N$.
Consider a graded vector space  $W=\Span\{w_{i'}\}_{i=1}^N$, where $i'=N+1-i$, and the basis vectors $w_{i'}$ of degree $\bar i'=\bar i$ carry weights $-\zt_i$.
Let $C\colon W\to V$ and $S\colon V\to W$ denote even  linear mappings defined by   $C(w_{i'})=v_i$,   $S(v_i)=\eps_i w_{i'}$,
where $\eps_i=\eps (-1)^{\bar i}$, $\eps =\pm 1$.
They satisfy $S =\eps \mathbb{P}C$, where $\mathbb{P}=\sum_{k=1}^N (-1)^{\bar k}e_{k,k}$ is the parity operator.

Let $\rho\colon \g\to \End(V)$ denote  the natural representation homomorphism. Define a representation $\g\to \End(V)\op \End(W)$, $x\mapsto \rho(x)\op \tilde \rho(x)$ that preserves the
operator $(v,w)\mapsto \bigl(C(w),S(v)\bigr)$:
$$
\tilde \rho(x)=-S\rho^t(x)S^{-1}, \quad \rho(x)=-C\tilde \rho^t(x)C^{-1}, \quad \forall x\in \g.
$$
These two  equations follow from each other.

Consider restriction of the adjoint representation of $\g$ on $\End(V\oplus W)$ to the subalgebra $\k$.
Denote by $\tilde \k$ the subalgebra $S\k^t S^{-1}$.
Let $A$ be the  twisted $\k$-invariant as in Proposition \ref{shaft_invariants}.
and  $\tilde A$ be a twisted  $\tilde \k$-invariant.
According to Proposition \ref{transpose k}, it is obtained from $A$ by changing the mixture parameters $c_\al\mapsto c'_\al=\pm c_\al^{-1}$, $\al \in \bar \Pi_\l$,
and by conjugation with $S$, which flips the order of the diagonal blocks and multiplies each $2\times 2$-block by $-1$.
\begin{propn}
  There exists an embedding of the subspace  $\End(V)^\k$ of twisted  $\k$-invariants  into the space of  adjoint invariants $\End(V\oplus W)^\k$.
It is given by the assignment
$$
A\mapsto \left(
\begin{array}{ccc}
  0 & AS^{-1} \\
  \tilde A C^{-1}& 0
\end{array}
\right).
$$
\end{propn}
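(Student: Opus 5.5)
We check directly that the block matrix $\mathcal{A}=\left(\begin{smallmatrix} 0 & AS^{-1}\\ \tilde A C^{-1} & 0\end{smallmatrix}\right)$ commutes with $\rho(x)\op\tilde\rho(x)$ for every $x\in\k$, and that the assignment $A\mapsto \mathcal{A}$ is injective and linear. Injectivity and linearity are immediate once we fix how $\tilde A$ depends on $A$. Following Proposition \ref{transpose k}, we set $\tilde A$ to be the image of $A$ under the substitution $c_\al\mapsto c'_\al$ followed by conjugation with $S$. Equivalently, and more usefully for the calculation, $\tilde A$ is determined (up to the normalisation built into Proposition \ref{shaft_invariants}) by the requirement that it be a twisted invariant of $\tilde\k=S\k^tS^{-1}$.

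Since $A$ is even and all of $C,S$ are even, the commutator with $\rho(x)\op\tilde\rho(x)$ splits into two off-diagonal conditions:
$$
\rho(x)AS^{-1}-AS^{-1}\tilde\rho(x)=0,\qquad \tilde\rho(x)\tilde AC^{-1}-\tilde AC^{-1}\rho(x)=0 .
$$
For the first one we substitute $\tilde\rho(x)=-S\rho^t(x)S^{-1}$. This gives $\bigl(\rho(x)A+A\rho^t(x)\bigr)S^{-1}$, which vanishes precisely by the twisted $\k$-invariance of $A$. For odd $x$ we must check that the sign $(-1)^{\deg A\deg x}$ is trivial because $A$ is even.

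For the second condition we substitute $\rho(x)=-C\tilde\rho^t(x)C^{-1}$ and obtain $\bigl(\tilde\rho(x)\tilde A+\tilde A\tilde\rho^t(x)\bigr)C^{-1}$. Now $\tilde\rho(\k)=S\rho(\k)^tS^{-1}$ up to sign, which is the representation image of $\tilde\k$ in $\End(W)$, identified with $\End(V)$ via $S$ or $C$. So this condition is exactly the twisted $\tilde\k$-invariance of $\tilde A$.

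The main obstacle is the bookkeeping of identifications. The twisted action on $\End(W)$ must be computed with the super-transposition relative to the basis $w_{i'}$, whose grading is reversed in order. We must confirm that the transposition on $W$ conjugated by $S$ agrees with the transposition on $V$ up to the parity operator $\mathbb{P}$, using $S=\eps\mathbb{P}C$ and $\mathbb{P}\rho^t(x)\mathbb{P}=\pm(\rho(x)^t)^t$-type identities. Once this is settled, Proposition \ref{transpose k} says that $\tilde\k$ is again a shaft subalgebra $\k(\vec{c'})$, written in the reversed basis. Proposition \ref{shaft_invariants} then supplies the twisted invariant $\tilde A$, namely the conjugate of $\mathcal{I}_\ell(\vec{c'})$ by $S$. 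This conjugation reverses the block order and flips the sign of the $2\times2$ blocks $\si_i$, as stated before the proposition. I would verify this sign behaviour on the three local types of $D(\al)$ used in the proof of Proposition \ref{transpose k}, as a sanity check, and conclude.
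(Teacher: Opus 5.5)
Your proposal is correct and follows the paper's own proof. Commutation of the block matrix with $\rho(x)\oplus\tilde\rho(x)$ splits into the two off-diagonal identities. After substituting $\tilde\rho(x)=-S\rho^t(x)S^{-1}$ and $\rho(x)=-C\tilde\rho^t(x)C^{-1}$, these become exactly the twisted invariance of $A$ under $\k$ and of $\tilde A$ under $\tilde\k=S\k^tS^{-1}$, and injectivity is already visible from the upper-right block $AS^{-1}$.

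Your additional bookkeeping, realising $\tilde A$ as the $S$-conjugate of $\mathcal{I}_\ell(\vec{c'})$, only spells out what the paper compresses into ``by construction''. The precise identity needed there is $\mathbb{P}X\mathbb{P}=(X^t)^t$ for the super-transposition, rather than the relation between $\mathbb{P}\rho^t(x)\mathbb{P}$ and $(\rho(x)^t)^t$ you wrote.
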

\begin{proof}
Permuting the matrices
$$
\left(
\begin{array}{ccc}
  \rho(x) & 0 \\
  0 & \tilde \rho(x)
\end{array}
\right)
\left(
\begin{array}{ccc}
  0 & AS^{-1} \\
  \tilde A C^{-1} & 0
\end{array}
\right)
=
\left(
\begin{array}{ccc}
  0 & AS^{-1} \\
  \tilde A C^{-1}& 0
\end{array}
\right)
\left(
\begin{array}{ccc}
  \rho(x) & 0 \\
  0 & \tilde \rho(x)
\end{array}
\right)
$$
where $x\in \k$, we arrive at the following equalities
$$
\rho(x) AS^{-1}=AS^{-1}\tilde \rho(x),\quad \tilde \rho(x)\tilde A C^{-1}=\tilde A C^{-1} \rho(x),
$$
$$
\rho(x) A =-A \rho^t(x),\quad \tilde \rho(x)\tilde A=-\tilde A \tilde \rho^t(x).
$$
They are satisfied by construction for all  $x\in \k$.
This completes the proof.
\end{proof}
\begin{corollary}
\nn
\label{4-param-inv}
    For each $\vec c\in T$, the algebra $\k(\vec c)$ has a four-parameter invariant
$$
\left(\begin{array}{ccc}
  \mu & AS^{-1} \\
  \tilde A C^{-1}& \nu
\end{array}
\right)
\in \End(V\oplus W).
$$
\end{corollary}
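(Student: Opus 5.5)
The plan is to decompose the proposed invariant into its block-diagonal and block-off-diagonal parts and check each separately. The adjoint action of $x\in\k$ on $\End(V\oplus W)$ is given by the graded commutator with $\rho(x)\oplus\tilde\rho(x)$. This action is linear in the matrix and preserves the block decomposition of $\End(V\oplus W)$ into
$$
\End(V)\oplus\End(W) \quad\mbox{and}\quad \mathrm{Hom}(W,V)\oplus\mathrm{Hom}(V,W).
$$
Therefore the matrix in the corollary is $\k$-invariant once each of its summands is invariant.

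The first step is the off-diagonal part $\left(\begin{array}{cc} 0 & AS^{-1}\\ \tilde A C^{-1} & 0\end{array}\right)$. This is exactly the image of the twisted $\k$-invariant $A$ under the embedding of the preceding Proposition. Here $A=\mathcal{I}_\ell$ is supplied by Proposition \ref{shaft_invariants}, and $\tilde A$ is obtained via Proposition \ref{transpose k}, so it is an adjoint $\k$-invariant. Two free scalars enter at this stage. The matrix $A$ is defined only up to the overall factor $a$ in (\ref{twisted-block}). The matrix $\tilde A$ is an independent twisted $\tilde\k$-invariant, so it can be scaled independently. Checking this requires verifying that the two intertwining equations in that proof decouple: the first involves only $A$ and the second only $\tilde A$. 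So each of the two blocks can be rescaled separately.

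The second step is the diagonal part $\mu\,\id_V\oplus\nu\,\id_W$. For even $x$ the graded commutator of $\rho(x)\oplus\tilde\rho(x)$ with $\id_V\oplus 0$ vanishes, because $\rho\oplus\tilde\rho$ is block diagonal, so each block identity commutes with it. The same holds for $0\oplus\id_W$. Odd elements of $\k$ are block diagonal too, and the block identities are even, so the supercommutator again reduces to an ordinary commutator, which is zero. This gives two more parameters, $\mu$ and $\nu$, for four in total: $\mu$, $\nu$, and the two scalings of $A$ and $\tilde A$.

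The main point needing care is that all four parameters survive for every admissible $\vec c\in T$. This requires the twisted invariant $\tilde A$ for $\tilde\k=S\k^tS^{-1}$ to exist for the same mixture vector. Proposition \ref{transpose k} identifies $\k^t$ with $\k(\vec c')$ where $c'_\al=\pm c_\al^{-1}$. Proposition \ref{shaft_invariants} holds for every nonzero mixture vector, so $\tilde A$ exists and is nonzero. It remains to note that the four summands are linearly independent, which is clear since they occupy distinct blocks (and $A\neq0$).
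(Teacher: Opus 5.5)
Your proposal is correct and follows the paper's route. The paper reads the corollary directly off the preceding proposition: its two intertwining conditions involve $A$ and $\tilde A$ separately, so the two blocks scale independently, and the block scalars $\mu\,\id_V$ and $\nu\,\id_W$ are clearly invariant under the block-diagonal action $\rho\oplus\tilde\rho$. You spell out more than the paper does, namely that $\tilde A$ exists for every nonzero $\vec c$ via the transposition and shaft-invariant propositions, and that the four blocks are independent, but the argument is the same.
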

\noindent
This result will be used for construction of $\k$-invariants for ortho-symplectic super-symmetric pairs.
Remark that different choices of root vectors (equivalently the representation $\rho$) can be compensated by different mixture parameters.
\subsection{Adjoint $\k$-invariants of black-tailed diagrams}
We argue that spherical  subalgebras defined via Satake diagrams  listed in (\ref{Black-tail}) have non-trivial invariants
whose structure we are going to describe.
Let us start with diagrams of white rank 1:
\be
\nn
\label{Black-tail-reduced}
\begin{picture}(18,10)
\put(2,3){\circle{3}}
\put(4,3){\line(1,0){7}}
\put(10.5,-0.5){$\blacklozenge$}
\put(1,7){$\scriptscriptstyle \al$}
\end{picture}
 \quad\quad
 \begin{picture}(18,10)
\put(1,7){$\scriptscriptstyle \al$}
\put(0.5,1.5){\framebox(3,3)}
\put(4,3){\line(1,0){7}}
\put(10.5,-0.5){$\blacklozenge$}
\end{picture}
 \quad\quad
\begin{picture}(28,10)
\put(11,7){$\scriptscriptstyle \al$}
\put(2,3){\circle*{3}}
\put(4,3){\line(1,0){7}}
\put(12,3){\circle{3}}
\put(14,3){\line(1,0){7}}
\put(20.5,-0.5){$\blacklozenge$}
\end{picture}
 \quad\quad
\begin{picture}(28,10)
\put(11,7){$\scriptscriptstyle \al$}
\put(2,3){\circle*{3}}
\put(3,3){\line(1,0){7}}
\put(10.5,1.5){\framebox(3,3)}
\put(14,3){\line(1,0){7}}
\put(20.5,-0.5){$\blacklozenge$}
\end{picture}
\ee
If the black sub-graph $D_\l$ is even, this  is a special case of  minimal symmetric polarization. Such $\k$
are quantized to coideal subalgebras $U_q(\k)$ which allow for $U_q(\k)$-fixed K-matrices, see \cite{AMS}, Theorem 3.1.
In the classical limit, they go to
$$
\mathcal{I}_1=(\mu+\la)\sum_{i=1}^{m}e_{ii}+a\si+a'\si^t+\la\sum_{i=m}^{m'}e_{ii}
$$
for certain $\la,\mu\in \C^\times $ and $a a'=-\mu\la$.
Here $m=1$ and $\si=e_{1,1'}$ if the leftmost node is white and $m=2$, $\si=e_{1,2'}-e_{2,1'}$ otherwise.

 The  matrix $\mathcal{I}_1$
will be $\k$-invariant if we allow for  arbitrary $\l$. Indeed, it will be
clearly $\l$-invariant, and the only mixed generator of $\k$ can be fixed independent of the polarization of $\l$.

Non-trivial $\k$-invariants for the remaining diagrams
$$
\begin{picture}(28,10)
\put(11,7){$\scriptscriptstyle \al$}
\put(2,3){\circle{3}}
\put(4,3){\line(1,0){7}}
\put(12,3){\circle{3}}
\put(14,3){\line(1,0){7}}
\put(20.5,-0.5){$\blacklozenge$}
\end{picture}
 \quad\quad
\begin{picture}(38,10)
\put(22,7){$\scriptscriptstyle \al$}
\put(2,3){\circle*{3}}
\put(3,3){\line(1,0){7}}
\put(10.5,1.5){\framebox(3,3)}
\put(14,3){\line(1,0){7}}
\put(23,3){\circle{3}}
\put(24,3){\line(1,0){7}}
\put(30.5,-0.5){$\blacklozenge$}
\end{picture}
 \quad
 \quad
\begin{picture}(28,10)
\put(11,7){$\scriptscriptstyle \al$}
\put(2,3){\circle{3}}
\put(3,3){\line(1,0){7}}
\put(10.5,1.5){\framebox(3,3)}
\put(14,3){\line(1,0){7}}
\put(20.5,-0.5){$\blacklozenge$}
\end{picture}
 \quad\quad
\begin{picture}(38,10)
\put(21,7){$\scriptscriptstyle \al$}
\put(2,3){\circle*{3}}
\put(3,3){\line(1,0){7}}
\put(10.5,1.5){\framebox(3,3)}
\put(14,3){\line(1,0){7}}
\put(21.5,1.5){\framebox(3,3)}
\put(25,3){\line(1,0){7}}
\put(31.5,-0.5){$\blacklozenge$}
\end{picture}
$$
from the list (\ref{Black-tail}) can be obtained from $\mathcal{I}_1$ by extending it to shaft $\k$-invariants
of diagrams
$
\begin{picture}(5,10)
\put(2,3){\circle{3}}
\end{picture}
$
and
$
\begin{picture}(15,10)
\put(2,3){\circle*{3}}
\put(3,3){\line(1,0){7}}
\put(10.5,1.5){\framebox(3,3)}
\end{picture}
$
using Corollary \ref{4-param-inv}. That is possible because only the first three terms in $\mathcal{I}_1$ matter for such subalgebras.

Let us express  $\Ic_1$ through the mixture parameter $c_\al$.
We normalize the mixed generator of the sub-diagram
$
  \begin{picture}(18,10)
\put(1,8){$\scriptscriptstyle \al$}
\put(0,1){$\scriptstyle\lozenge$}
\put(4,3){\line(1,0){7}}
\put(10.5,-0.5){$\blacklozenge$}
\end{picture}
$
as
$$
x_\al=-(-1)^{\overline{2}(\bar 1+\overline{2})} e_{1,{2}}+ e_{2',1'}-\eps_{2}(-1)^{\bar 1(\bar 1+\overline{2})}c_\al e_{2',1}+c_\al e_{1',2}.
$$
We find the following expressions for the parameters of the matrix $\Ic_1$:
$$
\mu=-\eps_1\la
,\quad
 a=-\eps_{2}(-1)^{\bar 1(\bar 1+\overline{2})}\la c_\al^{-1}
\quad
a' =- (-1)^{\overline{2}(\bar 1+\overline{2})}\la c_\al.
$$
For the diagram
$
\begin{picture}(28,10)
\put(11,8){$\scriptscriptstyle \al$}
\put(3,3){\circle*{3}}
\put(4,3){\line(1,0){7}}
\put(10,1){$\scriptstyle\lozenge$}
\put(14,3){\line(1,0){7}}
\put(20.5,-0.5){$\blacklozenge$}
\end{picture}
$
we enumerate the basis of the natural representation from $0$ to $0'$, with the conditions $(-1)^{\bar 0}=(-1)^{\bar 1}=-(-1)^{\bar 2}$ on the parities
and
$\eps_0=\eps_1=-\eps_2$ on
signatures of the invariant  form $C$.
If we fix the mixed generator as
$$
x_\al =
-(-1)^{\bar 2 (\bar 1+\bar 2)} e_{1,2}+ e_{2',1'}
-c_\al\eps_{2}(-1)^{\bar 1(\bar 1+\bar 2)} e_{2',0}+c_\al e_{0',2},
$$
then the parameters of $\Ic_1$ satisfy
$$
\mu =\la\eps_{1}, \quad a=-\eps_{1}(-1)^{\bar 1(\bar 1+\bar 2)}\la c_\al^{-1}
,\quad
a'=(-1)^{\bar 2 (\bar 1+\bar 2)}\la c_\al.
$$

\subsection{Matrix invariants of ortho-symplectic spherical subalgebras}
In this final section we sketch a proof that all Satake diagrams from the $\Bg,\Cg,\Dg$-series classified in the previous section are non-trivial.
That is, each diagram corresponds to a proper spherical subalgebra for at least one  vector of mixture parameters.
In fact, that is true for all $\vec c\not=0$ if $\tau(\al)= \al$. If $\tau(\al) \not= \al$, then  the non-zero mixture parameters $c_\al$
and $c_{\tau(\al)}$ are related to each other with only one exception for the sub-diagram
$
\quad
\begin{picture}(40,10)
\put(0,3){\circle*{3}}
 \put(1,3){\line(1,0){12}}
\put(13,1.5){\framebox(3,3)}
\put(16,3.5){\line(1,1){11}}\put(16,3.5){\line(1,-1){11}}
\put(27,13){\framebox(3,3)}\put(27,-8){\framebox(3,3)}
\put(27,13){\line(0,-1){18}}\put(30,13){\line(0,-1){18}}
\qbezier(32,-7)(39,4)(32,14)
\put(33.5,12){\vector(-2,3){2}}\put(33.5,-5){\vector(-2,-3){2}}
\end{picture}$.

For each  Satake diagram with $\tau=\id$ we construct an even matrix that commutes with the elements of $\k$.
It is distinct from a scalar matrix which is the only adjoint invariant of $\g$.
For a Satake diagram  of shape $\Dg$ with $\tau\not = \id$ we construct an even  matrix that is $\k$-invariant under the twisted adjoint action:
$$
x\tp A\mapsto x A - A d x d, \quad x\in \k, \quad A\in \End(V),
$$
where  the matrix $d$ is the permutation $v_n\leftrightarrow v_{n'}$ that induces the flip of the tail roots in  the Dynkin diagram.
We find such a  matrix that is not proportional to $d$ (the only $\g$-invariants) with the restriction on the mixture parameters mentioned above.

Our approach to the task is as follows. We select a Satake sub-diagram $S^r\subset S$ of small rank that contains the tail of $S$,
and a shaft Satake  sub-diagram $S^l\subset S$. The rightmost block in $S^l$ is one of
(\ref{rightmost_block}) whose extension by $S^r$ complies with  the selection rules from Section \ref{SecDDD-SR}.
The selected sub-diagrams satisfy the requirement
$
S^l\cup S^r=S,
$
while their intersection $S^c=S^l\cap S^r$ is a shaft sub-diagram of total rank $\leqslant 2$ and white rank $\leqslant 1$:

$$
\begin{picture}(150,20)
\put(40,5){\oval(70,20)}\put(70,5){\oval(50,16)}
\put(10,0){$S^l$}\put(54,0){$S^c$}\put(77,0){$S^r$}
\end{picture}
$$
The diagram $S^r$ is chosen the smallest subject to these conditions.

We introduce subalgebras $\k^l,\k^c, \k^r\subset \k$ determined by these sub-diagrams and the vectors of mixture parameters restricted
from $\bar \Pi_\l$.
The subalgebras $\k^r$ and $\k^l$ defined this way are spherical with regard to $S^r$ and $S^l$, respectively.
We  proceed further as follows.
We construct a matrix invariant for the tail part subalgebra $\k^r$ first, and show that its restriction
to $\k^c$ has the block structure described by Corollary \ref{4-param-inv}. This allows for extension to $\k^l$-invariants and
thus to $\k$-invariants.

The structure of the result can be described by the following induction on the white rank  $\ell$ of
$S=S_\ell$, in a similar way as in the proof of Proposition \ref{shaft_invariants}. The base of induction is delivered by $\k^r$-invariants.
Suppose that we have constructed an invariant $\mathcal{I}_\ell$ for some $\ell>1$ and let $\al$ be the leftmost white root in $S=S_\ell$.
Set $m=2$ if $\al$ is followed by a black root on the left  in $S_\ell$  and $m=1$ otherwise.
Set
$\si_\al=
\left(
\begin{array}{ccc}
  1 & 0 \\
  0 & -1
\end{array}
\right)
$
and $\si_\al=1$ if $m=1$.

Furthermore, suppose that constructed  $\mathcal{I}_\ell$ has the  form
$$
\mathcal{I}_\ell =(\la+\mu)\id_m+a\si_\ell + a' \si^t_\ell+\mathcal{I}_{\ell-1},
$$
where $\mathcal{I}_{\ell-1}$ is a central block,
$\si_\ell=\si_\al$ and $\si_\ell^t$ are, respectively, the  upper and lower skew-diagonal blocks, $\id_m$ is
the unit matrix block of size $m$ on the principal diagonal, and the parameters
$\la,\mu,a,a', aa'=-\la\mu$ depend on the mixture parameters of $\k_\ell$.
Since $\S_{\ell+1}=D(\al)\cup S_\ell$, we  can be extend $\mathcal{I}_\ell$ to
  $\Ic_{\ell+1}$  with the same block structure, thanks to Corollary \ref{4-param-inv}.

Thus, the proof reduces to the computing invariants of a shaft subalgebra  $\k^c$ which is done before,  and of  $\k^r$, which is done by  a case study.
The diagrams $S^r$ are those with  low "white rank" $\ell$ specified in Sections \ref{Sec_Black_tail}, \ref{Sec-White-tail}, and
\ref{Sec_Mixed_tail}.

In the case of black tails the diagrams of low $\ell$ are presented in (\ref{Black-tail}).
The sub-diagram $S^c$ of rank $0,0,1,1,2,1,1,2$, respectively,  comprises the nodes on the left of $\al$.

In  diagrams with white tail listed is Section \ref{Sec-White-tail}, the diagram $S^c$ is the complement to the tail sub-graph $D_\t$.

In diagrams with mixed coloured tail from Section \ref{Sec_Mixed_tail}, the diagram $S^c$ consists of one node $\{\al_{n-3}\}$
for the first two diagrams in (\ref{mixed_tails_smallest}) and of two nodes $\{\al_{n-4},\al_{n-3}\}$ for the right diagram.

Thus we arrive at the main finding of this study.
\begin{thm}
  Spherical subalgebras $\k$ classified by the graded Satake diagrams are non-trivial for any vector $\vec c$ of non-zero  mixture parameters
  subject to the condition $c_{\tau(\al)}=c_\al$, with an appropriate normalization of root vectors.
\end{thm}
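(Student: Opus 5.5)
The plan is to exhibit, for every graded Satake diagram $S$ and every admissible $\vec c$, an even matrix $\mathcal{I}\in\End(V)$ that is invariant under $\k$ but not under $\g$. For $\tau=\id$ the action is adjoint, or twisted by super-transposition in the shaft case; for $\Dg$ with $\tau\neq\id$ the action is twisted by $d$. Since the $\g$-invariants are only scalars, zero, or multiples of $d$, such a matrix forces $\k\neq\g$.

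I would organize the argument by the shape of $S$.

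\textbf{Shape $\Ag$.} The work is already done. For $\tau\neq\id$, the matrix (\ref{GL-inv}) is $\k$-invariant by the Proposition preceding the Corollary on (\ref{GL-I}), once $c_\al=c_{\tau(\al)}$ and the parameters $a_i,\la,\mu$ are fixed as indicated. For shaft diagrams, Proposition \ref{shaft_invariants} supplies the twisted invariant $\mathcal{I}_\ell$ for every nonzero $\vec c$. The outstanding diagrams (\ref{3NODES}) are handled by the explicit matrices of Section \ref{Sec_twisted_GL}.

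\textbf{Shapes $\Bg,\Cg,\Dg$.} Here I would follow the gluing scheme just described. For each $S$, I choose the smallest tail sub-diagram $S^r$ from the lists (\ref{Black-tail}), Section \ref{Sec-White-tail} and (\ref{mixed_tails_smallest}), together with the shaft part $S^l$, so that $S^c=S^l\cap S^r$ has total rank at most 2 and white rank at most 1. The proof then proceeds by induction on the white rank $\ell$.

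\emph{Base step.} I construct a $\k^r$-invariant $\mathcal{I}^r$ for each small tail diagram by direct computation.
\begin{itemize}
\item For black tails, $\mathcal{I}_1$ is given with the explicit values of $\mu,a,a'$ in terms of $c_\al$. It remains $\l$-invariant for arbitrary polarization of $\l$ because it is scalar on the support of $\l$.
\item For white tails, I use matrices of the form $(\mu+\la)\id_m+a\si+a'\si^t+\la\,\id$ on the middle block.
\item For $\Dg$ with $\tau\neq\id$, I replace $\la\,\id$ on $\Span\{v_n,v_{n'}\}$ by a combination involving $d$. The twisted equation then imposes $c_{\al_{n-1}}=c_{\al_n}$, except in the odd-tail diagram flagged above.
\end{itemize}

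\emph{Compatibility check.} I then verify that the restriction of $\mathcal{I}^r$ to the $\k^c$-relevant rows and columns, namely the first $m$ and last $m$ basis vectors, has exactly the block form of Corollary \ref{4-param-inv}. This means $\mu$ and $\nu$ appear on the diagonal, and $AS^{-1}$ and $\tilde A C^{-1}$ appear on the anti-diagonal blocks, where $A$ is a twisted shaft invariant of $\k^c$ from Proposition \ref{shaft_invariants} and $\tilde A$ is its partner from Proposition \ref{transpose k}.

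\emph{Inductive step.} Let $\al$ be the leftmost white root of $S_\ell$. I extend $\mathcal{I}_\ell$ to $\mathcal{I}_{\ell+1}$ by adjoining outer blocks $\id_m$, $\si_\al$ and $\si_\al^t$, as in the proof of Proposition \ref{shaft_invariants}. The matrix stays invariant under the previous subalgebra, because the new blocks sit in the kernel of its action. It is invariant under $\k^{D(\al)}$ by Corollary \ref{4-param-inv}, after rescaling so that the overlapping blocks agree. Since $\k$ is generated by $\l$ together with these pieces, $\mathcal{I}_{\ell+1}$ is $\k$-invariant.

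\textbf{Non-triviality.} The resulting matrix is not scalar, and in the $\Dg$ twisted case not proportional to $d$, because $a,a'\neq 0$. Hence $\k\neq\g$.

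\textbf{Main obstacle.} I expect the hardest part to be the base case with general grading. The sign factors $\eps_i$ and $(-1)^{\bar i(\bar i+\bar j)}$ in the root vectors must be matched to the sign ambiguity $c'_\al=\pm c_\al^{-1}$ of Proposition \ref{transpose k}, so that the two gluing blocks are consistent with a single value of $c_\al$. My first approach is to use the freedom to renormalize root vectors. The statement allows this, and I would absorb every sign into the choice of $e_\al,f_{\tilde\al}$. A further reduction is available: by setting the grading to zero on the supports of the black $\s\l(2)$ components, as in Proposition \ref{transpose k}, all signs become $+1$.
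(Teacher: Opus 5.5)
Your plan is the paper's own. The $\Ag$ cases go through (\ref{GL-inv}), Proposition~\ref{shaft_invariants} and the explicit matrices for (\ref{3NODES}). For $\Bg,\Cg,\Dg$ you glue a $\k^r$-invariant to shaft invariants through Corollary~\ref{4-param-inv} by induction on the white rank, with the paper's $\mathcal{I}_1$ as the black-tail base. The paper leaves the remaining $\k^r$-invariants to a case study it does not write out. You instead commit to a form for them, and \emph{for white tails with $\tau=\id$ that form cannot be invariant}.

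Your template is $\la\,\id$ on a central block containing (part of) the tail support, surrounded by layers $(\la+\mu)\id_m+a\si+a'\si^t$; for $\Dg$ you say the block contains $\Span\{v_n,v_{n'}\}$. Let $\al=\zt_j-\zt_{j+1}$ be the white root joining the innermost layer to that block.
\begin{itemize}
\item In a black tail, $w_\l$ acts by $-1$ on the tail weights, so $\tilde\al=\zt_i+\zt_{j+1}$. Then $f_{\tilde\al}$ carries the layer across to $v_{(j+1)'}$, and this is what lets $a'$ cancel the discrepancy between $\la+\mu$ and $\la$. The same holds for $\Dg$ with $\tau\neq\id$ after multiplying by $d$, since $\al_{n-1}$ is mixed with $\al_n=\zt_{n-1}+\zt_n$.
\item With a white tail and $\tau=\id$, $\tilde\al$ is $\al$, or $\al$ plus the black root on its left. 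So $x_\al$ lies in the shaft $\g\l$ and preserves both the unprimed and the primed halves of $V$, with $x_\al v_{j'}=\pm v_{(j+1)'}$.
\item Since $Av_{j'}$ lies in the span of the unprimed layer vectors, the $v_{(j+1)'}$-component of $[x_\al,A]v_{j'}$ is $\mp\la\neq0$, whatever $\mu,a,a'$ are.
\end{itemize}

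Two examples make this concrete.
\begin{itemize}
\item For $\s\p(4)$ with both nodes white, $\k\cong\g\l(2)$ acts on $L\oplus L^*$. A non-scalar invariant therefore has two $2$-dimensional eigenspaces, while your matrix has eigenvalue $\la$ of multiplicity three.
\item For $\s\o(6)$ with all nodes white and $\tau=\id$, $\k\cong\s\o(3)\oplus\s\o(3)$ requires eigenspaces of dimensions $3+3$. A matrix that is scalar on $\Span\{v_3,v_{3'}\}$ has an eigenvalue of multiplicity at least four. That is the pattern of the $\tau\neq\id$ subalgebra $\s\o(2)\oplus\s\o(4)$.
\end{itemize}

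So for $\tau=\id$ white tails the couplings $v_i\leftrightarrow v_{i'}$ must continue through the tail. In these even examples at most the zero-weight vector stays scalar, and the couplings must be chosen to commute with the tail generators. For graded tails with odd roots, the $\k^r$-invariants have to be computed directly and then checked to restrict to the form of Corollary~\ref{4-param-inv} on $\k^c$. That case study is exactly what neither you nor the paper writes out, and it is where the proof still needs work. The gluing induction and the $\Ag$ part are fine as you state them.
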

\noindent
This result substantiates the classification of Satake diagrams undertaken in this exposition.

\vspace{20pt}

\noindent
\underline{\large \bf Acknowledgement}

\vspace{10pt}
\noindent
This work is done at the Center of Pure Mathematics MIPT. It is financially supported by Russian Science Foundation grant 26-11-00115.

\noindent
D. Algethami is  thankful to the Deanship of Graduate Studies and Scientific Research at University of Bisha for supporting this work through the Fast-Track Research Support Program.

\vspace{10pt}
\noindent
\underline{\large \bf Data Availability}

\vspace{10pt}

 Data sharing not applicable to this article as no datasets were generated or analysed during the current study.

\subsection*{Declarations}

\underline{\large \bf Competing interests}

\vspace{10pt}
\noindent
The authors have no competing interests to declare that are relevant to the content of this article.

 \end{document}